\documentclass[11pt]{article}

\usepackage{amssymb}
\usepackage{amsmath}
\usepackage{graphicx}
\usepackage{caption}
\usepackage{subcaption}
\usepackage{float}
\usepackage{dcolumn}
\usepackage{amsthm}
\usepackage{empheq}
\usepackage{multirow}
\usepackage{enumitem}
\usepackage{amsfonts}
\usepackage{color}
\usepackage[sort]{cite}
\usepackage{color}
\usepackage{xcolor}
\usepackage{colortbl}
\usepackage{adjustbox}
\usepackage{booktabs}

\setlength{\parindent}{0pt}

\usepackage{geometry}
\geometry{
        letterpaper,
	left=20mm,
	right=20mm,
	top=20mm,
	bottom=20mm,
	bindingoffset=0mm
}

\usepackage{hyperref}
\hypersetup{
	colorlinks=true,
	linkcolor=blue,
        citecolor=blue,
	filecolor=magenta,      
	urlcolor=cyan,
}

\usepackage{pgf}
\usepackage{tikz}
\usetikzlibrary{arrows.meta,decorations.pathmorphing,backgrounds,positioning,fit}

\numberwithin{equation}{section}

\def\Line(#1,#2)(#3,#4){\qbezier(#1,#2)(#1,#2)(#3,#4)}
\def\DashLine(#1,#2)(#3,#4){\qbezier[50](#1,#2)(#1,#2)(#3,#4)}

\newcommand{\inp}[2][]{\left(#1, #2\right)}
\newcommand{\gnp}[2][]{\langle#1, #2\rangle}

\newtheorem{remark}{Remark}[section]
\newtheorem{lemma}{Lemma}[section]

\newtheorem{theorem}{Theorem}[section]


\def\Tc{\mathcal{T}}

\def\Qc{\mathcal{Q}}

\def\ERT{\mathcal{ERT}}
\def\RT{\mathcal{R\!T}}

\def\Pc{\mathcal{P}}

\def\X{\mathbb{X}}
\def\W{\mathbb{W}}
\def\R{\mathbb{R}}
\def\M{\mathbb{M}}
\def\S{\mathbb{S}}
\def\N{\mathbb{N}}

\def\Q{\mathbb{Q}}

\def\x{\mathbf{x}}

\def\s{\sigma}
\def\t{\tau}
\def\g{\gamma}

\def\r{\mathbf{r}}

\def\As{A_{\s\s}}
\def\Au{A_{\s u}}
\def\Ag{A_{\s\g}}

\def\Eh{\hat{E}}
\def\Ah{\hat{A}}

\def\rh{\hat{\r}}

\def\nh{\hat{n}}

\def\xh{\hat{x}}
\def\yh{\hat{y}}
\def\zh{\hat{z}}

\def\th{\hat\tau}

\def\ch{\hat{\chi}}

\def\Xh{\hat{\X}}
\def\Vh{\hat{V}}


\def\skew{\operatorname{Skew}}

\def\curl{\operatorname{curl}}
\def\dvr{\operatorname{div}}          
\def\dvrg{\operatorname{div}}  

\def\DF{D\!F}

\def\Om{\Omega}
\def\Gn{\Gamma_N}
\def\Gd{\Gamma_D}

\allowdisplaybreaks

\setlength{\parindent}{15pt}
\begin{document}

\title{Multipoint stress mixed finite element methods for elasticity on cuboid grids}

\author{Ibrahim Yazici\thanks{Department of Mathematics, University of
    Pittsburgh, Pittsburgh, PA 15260, USA;~{\tt \{iby2@pitt.edu, yotov@math.pitt.edu\}}. Partially supported by NSF grants DMS-2111129 and DMS--2410686.}
\and Ivan Yotov\footnotemark[1]~}

\date{}
\maketitle
\begin{abstract}
  We develop multipoint stress mixed finite element methods for linear elasticity with weak stress symmetry on cuboid grids, which can be reduced to a symmetric and positive definite cell-centered system. The methods employ the lowest-order enhanced Raviart-Thomas finite element space for the stress and piecewise constant displacement. The vertex quadrature rule is employed to localize the interaction of stress degrees of freedom, enabling local stress elimination around each vertex. We introduce two methods. The first method uses a piecewise constant rotation, resulting in a cell-centered system for the displacement and rotation. The second method employs a continuous piecewise trilinear rotation and the vertex quadrature rule for the asymmetry bilinear forms, allowing for further elimination of the rotation and resulting in a cell-centered system for the displacement only. Stability and error analysis is performed for both methods. For the stability analysis of the second method, a new auxiliary $H(\curl)$-conforming matrix-valued space is constructed, which forms an exact sequence with the stress space. A matrix-matrix inf-sup condition is shown for the curl of this auxiliary space and the trilinear rotation space. First-order convergence is established for all variables in their natural norms, as well as second-order superconvergence of the displacement at the cell centers. Numerical results are presented to verify the theory.
\end{abstract}

\section{Introduction}

Mixed finite element (MFE) methods for stress-displacement elasticity formulations offer accurate stress with local momentum conservation and locking-free approximations for nearly incompressible materials \cite{Boffi-Brezzi-Fortin}. Numerous studies have explored both strong \cite{arnold2005rectangular, arnold2002mixed} and weak \cite{arnold2015mixed, arnold2007mixed, lee2016towards, awanou2013rectangular,CGG,GopGuz-second,JunLee-mesh-dep,JunLee-optimal} stress symmetry within these methods. However, a drawback is that they yield algebraic systems of the saddle point type, which can be computationally costly to solve. To alleviate this issue, multipoint stress mixed finite element (MSMFE) methods that can be reduced to symmetric and positive definite cell-centered systems have been developed for simplicial \cite{msmfe-simpl} and quadrilateral \cite{msmfe-quads} grids. These methods are related to the multipoint stress approximation (MPSA) method \cite{Nordbotten-MPSA,keilegavlen2017finite} and inspired by the multipoint flux mixed finite element (MFMFE) method for Darcy flow \cite{WheXueYot-nonsym, IngWheYot-sym, WheYot-mfmfe-2006}, which is closely related to the multipoint flux approximation (MPFA) method \cite{Aavatsmark-etal-mpfa,EdwardsRogers,KlausenWinther,AEKWY}. The MPFA method is a finite volume method obtained by eliminating fluxes around mesh vertices in terms of neighboring pressures. The MFMFE method employs the lowest order Brezzi-Douglas-Marini $\mathcal{BDM}_1$ spaces \cite{BDM} on simplicial and quadrilateral grids and an enhanced Brezzi-Douglas-Duran-Fortin $\mathcal{BDDF}_1$ space \cite{Boffi-Brezzi-Fortin} on hexahedral grids \cite{IngWheYot-sym}. A vertex quadrature rule is used, allowing for local velocity elimination and leading to a positive definite cell-centered system for pressures.

In this paper we develop two MSMFE methods for elasticity on cuboid grids that can be reduced to symmetric and positive definite cell-centered systems. We consider a formulation where the symmetry of the stress is imposed weakly using a Lagrange multiplier, which has a physical interpretation as rotation. We note that there have been relatively few works on mixed elasticity with weak stress symmetry on cuboid grids \cite{awanou2013rectangular,lee2016towards,JunLee-mesh-dep,JunLee-optimal}. The spaces we propose are different from those previously used. They are specifically defined to allow for local elimination of stress and, in the second method, rotation. Our first method, referred to as MSMFE-0, uses a stress-displacement-rotation triple $\X_h\times V_h \times \W_h$
based on the spaces $\mathcal{ERT}_0 \times Q_0 \times Q_0$, utilizing the lowest order enhanced Raviart-Thomas space $\mathcal{ERT}_0$ for stress and piecewise constant displacement and rotation. The enhanced Raviart-Thomas spaces of order $k$ are introduced in \cite{high-order-mfmfe} for higher order MFMFE methods. The construction involves enhancing the Raviart-Thomas spaces with curls of specifically chosen polynomials. In the lowest order case, the degrees of freedom can be chosen as the values of the normal components at any four points of each of the six faces. We choose these points to be the vertices, motivated by the vertex quadrature rule we employ. This localizes the interaction of the stress degrees of freedom around the vertices, resulting in a block-diagonal stress matrix. The stress is then locally eliminated, reducing the method to a symmetric and positive definite cell-centered system for displacement and rotation, which is smaller and easier to solve than the original system. Our second method, MSMFE-1, is based on the spaces $\mathcal{ERT}_0 \times Q_0 \times Q_1$ with continuous trilinear rotation. In this method, we employ the vertex quadrature rule for both stress-stress and stress-rotation bilinear forms. This allows for further local elimination of the rotation after the initial stress elimination, resulting in a symmetric and positive definite cell-centered system for displacement only.

We perform stability and error analysis for both MSMFE methods. The stability of the MSMFE-0 method follows easily from the argument in \cite{awanou2013rectangular} for the method based on $\mathcal{BDM}_1\times Q_0 \times Q_0$ on cuboid grids. However, the stability analysis for the MSMFE-1 method requires a different approach. We utilize the framework from \cite{arnold2015mixed,lee2016towards}, which is based on two conditions. One is that $\X_h \times V_h$ is a stable Darcy pair, which holds for the $\mathcal{ERT}_0 \times Q_0$ spaces \cite{high-order-mfmfe}. The second condition requires a construction of an auxiliary matrix-valued space $\Q_h$ such that $\curl \Q_h \subset \X_h$ and $\Q_h\times \W_h$ is a stable Stokes pair with the vertex quadrature rule. This is the key component of our analysis. In our case it is not possible to construct $H^1$-conforming space that satisfies a standard Stokes inf-sup condition, which was the approach in previously developed MSMFE methods on simplices \cite{msmfe-simpl} and quadrilaterals \cite{msmfe-quads}. Instead, we construct $H(\curl)$-conforming space $\Theta_h$, utilizing the covariant transformation, and set $\Q_h = (\Theta_h)^3$. The space $\Theta_h$ is of interest by itself, since it forms an exact sequence with $\mathcal{ERT}_0$, i.e., $\curl \Theta_h$ is the divergence-free subspace of $\mathcal{ERT}_0$. We establish that the matrix-valued pair $\Q_h\times \W_h$ satisfies a curl-based inf-sup condition with the vertex quadrature rule. Following the stability analysis, we establish for both methods first-order convergence for the stress in the $H(\dvrg)$-norm and for the displacement and rotation in the $L^2$-norm, as well as second-order superconvergence of the displacement at the cell centers. The theory is illustrated by numerical experiments, including a test showing locking-free behavior for nearly incompressible materials. Additionally, a modified version of the MSMFE-1 method based on scaled rotation is introduced, which is better suited for problems with discontinuous compliance tensors, and its performance is verified numerically.

The rest of the paper is organized as follows. The model problem and its MFE approximation are presented in Section 2. Sections 3 and 4 develop the two MSMFE methods and their stability, respectively. The error analysis is performed in Section 5. Numerical results are presented in Section 6.

\section{The model problem and its MFE approximation}

In this section, we recall the weak stress symmetry formulation of the elasticity system. Following this, we introduce its MFE approximation and a quadrature rule, which together serve as the foundation for the MSMFE methods discussed in the subsequent sections.

Let $\Om$ be a simply connected bounded domain of $\R^3$ occupied by a
linearly elastic body. We write $\M$, $\S$ and $\N$ for the spaces of real $3\times 3$ matrices, symmetric matrices and skew-symmetric matrices, respectively. We will utilize the usual divergence
operator $\dvr$ for vector fields. When applied to a matrix field, it produces a vector field by taking the divergence of each row. We will also use the usual curl operator applied to vector fields in three dimensions. For a matrix field in three dimensions, the curl operator produces a matrix field, by acting row-wise.

Throughout the paper, $C$ denotes a generic positive constant that is
independent of the discretization parameter $h$. We will also use the
following standard notation. For a domain $G \subset \R^3$, the
$L^2(G)$ inner product and norm for scalar, vector, or tensor valued functions
are denoted $\inp[\cdot]{\cdot}_G$ and $\|\cdot\|_G$,
respectively. The norms and seminorms of the Sobolev spaces
$W^{k,p}(G),\, k \in \R,\ p>0$ are denoted by $\| \cdot \|_{k,p,G}$ and
$| \cdot |_{k,p,G}$, respectively. The norms and seminorms of the
Hilbert spaces $H^k(G)$ are denoted by $\|\cdot\|_{k,G}$ and $| \cdot
|_{k,G}$, respectively. We omit $G$ in the subscript if $G = \Om$. For
a section of the domain or element boundary $S$ we
write $\gnp[\cdot]{\cdot}_S$ and $\|\cdot\|_S$ for the $L^2(S)$ inner
product (or duality pairing) and norm, respectively. We define
the spaces 
\begin{align*}
&H(\dvrg;\Om) = \{v \in L^2(\Om, \R^3) : \dvr v \in L^2(\Om)\},\\
&H(\dvrg;\Om,\mathbb{M}) = \{\tau \in L^2(\Om, \mathbb{M}) : \dvr \tau \in L^2(\Om,\mathbb{R}^3)\},\\
&H(\curl;\Om) = \{v \in L^2(\Om, \R^3) : \curl v \in L^2(\Om,\R^3)\},\\
&H(\curl;\Om,\mathbb{M}) = \{\tau \in L^2(\Om, \mathbb{M}) : \curl \tau \in L^2(\Om,\mathbb{M})\},
\end{align*}
equipped with the norms
\begin{align*}
&\|\tau\|_{\dvr} = \left( \|\tau\|^2 + \|\dvr \tau\|^2 \right)^{1/2}, \quad \|\tau\|_{\curl} = \left( \|\tau\|^2 + \|\curl \tau\|^2 \right)^{1/2}.
\end{align*}

Let $A = A(\x)$ be the compliance tensor describing the material
properties of the elastic body at each point $\x \in \Om $, which is a symmetric, bounded and uniformly
positive definite linear operator acting from $\S$ to $\S$. We also
assume that an extension of $A$ to an operator $\M \to \M$ still possesses the above properties.
Given a vector field $f \in L^2(\Omega,\R^3)$ representing body forces,
the equations of static linear elasticity in Hellinger-Reissner form determine
the stress $\sigma$ and the displacement $u$ satisfying the
constitutive and equilibrium equations, respectively,
\begin{align}
    A\s = \epsilon(u), \quad \dvr \s = f \quad \text{in } \Om, \label{elast-1}
\end{align}
together with the boundary conditions 
\begin{align}
    u = 0 \ \text{ on } \Gd, \quad  \s\,n = 0 \ \text{ on } \Gn, \label{elast-2}
\end{align}
where $\epsilon(u) = \frac{1}{2}\left(\nabla u + (\nabla u)^T\right)$ and $\partial\Om = \Gd \cup \Gn$.
To avoid issues with non-uniqueness, we assume that $\Gd \neq \emptyset$.

We consider a weak formulation for \eqref{elast-1}--\eqref{elast-2}, in
which the stress symmetry is imposed weakly, using the
Lagrange multiplier $\g = \skew(\nabla u)$,
$\skew(\tau) = \frac12(\tau - \tau^T)$, from the space of
skew-symmetric matrices:
find $(\s, u, \g) \in \X \times V \times \W$ such that:
\begin{align}
    \inp[A\s]{\tau} + \inp[u]{\dvr \tau} + \inp[\g]{\tau} &= 0, &\forall \tau &\in \X, \label{weak-1}\\
    \inp[\dvr \s]{v} &= \inp[f]{v}, &\forall v &\in V, \label{weak-2}\\
	\inp[\s]{w} &= 0, &\forall w &\in \W, \label{weak-3}
\end{align}
where the corresponding spaces are
$$\X = \left\{ \tau\in H(\dvrg;\Omega,\M) : \tau\,n = 0 \text{ on } \Gn  \right\}, \quad V = L^2(\Omega, \R^3), \quad \W = L^2(\Omega, \N).
$$
Problem \eqref{weak-1}--\eqref{weak-3} has a unique solution \cite{arnold2007mixed}. 

Define the invertible operators $S:\mathbb{M}\rightarrow \mathbb{M}$ and $\Xi:\mathbb{R}^3\rightarrow \mathbb{N}$ such that
\begin{align}
S(\tau)&=\text{tr}(\tau)I-\tau^T\  \text{for}\  \tau\in \mathbb{M},\label{S defin}\\
\Xi(p)&= \begin{pmatrix} 0 & -p_3 & p_2 \\ p_3 & 0 & -p_1 \\ -p_2 & p_1 & 0 \end{pmatrix} \ \text{for}\  p\in \mathbb{R}^3,
\end{align}
where $I$ denotes the $3 \times 3$ identity matrix.
A direct calculation shows that $S: H(\curl;\Om,\mathbb{M}) \to H(\dvrg;\Om,\mathbb{M})$ and 
\begin{align}
  \curl q : w = -\Xi(\dvr S(q)) : w   \quad \forall q \in H^1(\curl;\Omega,\mathbb{M}), \ \forall w \in L^2(\Om,\mathbb{N}) \ \mbox{ a.e. in } \Omega.     \label{as prop3}
\end{align}

\subsection{Mixed finite element spaces} 
Here we present the MFE approximation
of \eqref{weak-1}--\eqref{weak-3}, which is the basis for the MSMFE methods. We assume that $\Omega$ can be covered by a shape-regular cuboid partition $\Tc_h$ with $h_E = \text{diam} (E)$ for $E \in \Tc_h$ and $h=\max_{E\in \Tc_h} h_E$. For any element $E \in \Tc_h$ there exists a linear
bijection mapping $F_E: \Eh \to E$, where $\Eh$ is the unit reference
cube with vertices $\rh_1 = (0,0,0)^T$, $\rh_2 = (1,0,0)^T$, $\rh_3 = (1,1,0)^T$, $\rh_4 = (0,1,0)^T$,  $\rh_5 = (0,0,1)^T$,  $\rh_6 = (1,0,1)^T$,  $\rh_7 = (1,1,1)^T$, and $\rh_8 = (0,1,1)^T$, with unit outward normal vectors to the edges denoted by $\nh_i$, $i = 1,\ldots,6$, see Figure~\ref{elements}. The mapping $F_E$ is given by
\begin{align}
F_E(\hat \x)  = \r_1 + \r_{21}\xh + \r_{41}\yh + \r_{51}\zh.
\label{mapping}
\end{align}
Denote the Jacobian matrix by $\DF_E$ and let $J_E =
|\!\operatorname{det} (\DF_E)|$. Denote the inverse mapping by $F^{-1}_E$, its Jacobian matrix by $DF^{-1}_E$, and let $J_{F^{-1}_E}=|\text{det}(DF^{-1}_E)|$. For $\x = F_E(\hat\x)$ we have 
$$
\DF^{-1}_E (\x) = (\DF_E)^{-1}(\hat \x), \qquad J_{F^{-1}_E}(\x) = \frac{1}{J_E(\hat\x)}.
$$
The shape-regularity of the grids imply that $\forall E \in \Tc_h$,
\begin{align}
\| \DF_E \|_{0,\infty, \Eh} \sim h_E, \quad \| \DF^{-1}_E \|_{0,\infty, E} \sim h_E^{-1}, \quad \| J_E \|_{0,\infty, \Eh} \sim h_E^3 \quad \text{and} \quad \| J_{F_E^{-1}} \|_{0,\infty, E} \sim h_E^{-3}, \label{scaling-of-mapping}
\end{align}
where the notation $a\sim b$ means that there exist positive constants
$c_0,\, c_1$ independent of $h$ such that $c_0 b \le a \le c_1 b$.

Following the construction on \cite{high-order-mfmfe}, we define the lowest order enhanced Raviart-Thomas space as follows. The $\mathcal{RT}_0$ space is defined on reference cube $\hat{E}$ as
\begin{align}
\mathcal{RT}_0(\hat{E})=\begin{pmatrix} \alpha_1 + \beta_1 \hat{x} \\ \alpha_2 + \beta_2 \hat{y} \\ \alpha_3 + \beta_3 \hat{z} \label{RT0space} \end{pmatrix}, \quad \alpha_i, \beta_i \in \R, \ i = 1,2,3.
\end{align}
Next, define $\mathbf{\mathcal{B}}(\hat{E})=\bigcup_{i=1}^3 \mathbf{\mathcal{B}}_i$, where
\begin{align*}
  \mathbf{\mathcal{B}}_1(\hat{E})&=\text{span}\left\{
  \begin{pmatrix} \hat{y} \\ 0 \\ 0 \end{pmatrix}, 
  \begin{pmatrix} \hat{z} \\ 0 \\ 0 \end{pmatrix}, 
  \begin{pmatrix} \hat{x}\hat{y} \\ 0 \\ 0 \end{pmatrix}, 
  \begin{pmatrix} \hat{x}\hat{z} \\ 0 \\ 0 \end{pmatrix}, 
  \begin{pmatrix} \hat{y}\hat{z} \\ 0 \\ 0 \end{pmatrix}, 
  \begin{pmatrix} \hat{x}\hat{y}\hat{z} \\ 0 \\ 0 \end{pmatrix}
  \right\},\\
  \mathbf{\mathcal{B}}_2(\hat{E})&=\text{span}\left\{
  \begin{pmatrix} 0 \\ \hat{x} \\ 0 \end{pmatrix},
  \begin{pmatrix} 0 \\ \hat{z} \\ 0 \end{pmatrix},
  \begin{pmatrix} 0 \\ \hat{x}\hat{y} \\ 0 \end{pmatrix},
  \begin{pmatrix} 0 \\ \hat{x}\hat{z} \\ 0 \end{pmatrix},
  \begin{pmatrix} 0 \\ \hat{y}\hat{z} \\ 0 \end{pmatrix},
  \begin{pmatrix} 0 \\ \hat{x}\hat{y}\hat{z} \\ 0 \end{pmatrix}
  \right\}, \\ 
  \mathbf{\mathcal{B}}_3(\hat{E})& = \text{span}\left\{
  \begin{pmatrix} 0 \\ 0 \\ \hat{x}  \end{pmatrix},
  \begin{pmatrix} 0 \\ 0 \\ \hat{y}  \end{pmatrix},
  \begin{pmatrix} 0 \\ 0 \\ \hat{x}\hat{y}  \end{pmatrix},
  \begin{pmatrix} 0 \\ 0 \\ \hat{x}\hat{z}  \end{pmatrix},
  \begin{pmatrix} 0 \\ 0 \\ \hat{y}\hat{z}  \end{pmatrix},
  \begin{pmatrix} 0 \\ 0 \\ \hat{x}\hat{y}\hat{z}  \end{pmatrix}
  \right\},
\end{align*}
and consider
\begin{align}
\mathbf{\hat{x}} \times \mathbf{\mathcal{B}}(\hat{E})=&\text{span} \left\{\begin{array}{@{} c @{}}        \begin{pmatrix} 0 \\ \hat{y}\hat{z} \\ -\hat{y}^2 \end{pmatrix} , 
 \begin{pmatrix} 0 \\ \hat{z}^2 \\ -\hat{y} \hat{z} \end{pmatrix},
  \begin{pmatrix} 0 \\ \hat{y}\hat{z}^2 \\ -\hat{y}^2\hat{z} \end{pmatrix},
   \begin{pmatrix} 0 \\ \hat{x}\hat{y}\hat{z} \\ -\hat{x}\hat{y}^2 \end{pmatrix},
    \begin{pmatrix} 0 \\ \hat{x}\hat{z}^2 \\ -\hat{x} \hat{y} \hat{z} \end{pmatrix},
     \begin{pmatrix} 0 \\ \hat{x}\hat{y}\hat{z}^2 \\ -\hat{x}\hat{y}^2 \hat{z} \end{pmatrix}  
    \end{array}\right\}\nonumber\\
    &+\text{span} \left\{\begin{array}{@{} c @{}}        \begin{pmatrix} -\hat{x}\hat{z} \\ 0 \\ \hat{x}^2 \end{pmatrix} , 
 \begin{pmatrix} -\hat{z}^2 \\ 0 \\ \hat{x} \hat{z} \end{pmatrix},
  \begin{pmatrix} -\hat{x}\hat{z}^2 \\ 0 \\ \hat{x}^2\hat{z} \end{pmatrix},
   \begin{pmatrix} -\hat{x}\hat{y}\hat{z} \\ 0 \\ \hat{x}^2\hat{y} \end{pmatrix},
    \begin{pmatrix} -\hat{y}\hat{z}^2 \\ 0 \\ \hat{x}\hat{y} \hat{z} \end{pmatrix},
     \begin{pmatrix} -\hat{x}\hat{y}\hat{z}^2 \\ 0 \\ \hat{x}^2\hat{y} \hat{z} \end{pmatrix}  
    \end{array}\right\}\nonumber\\
    &+\text{span} \left\{\begin{array}{@{} c @{}}        \begin{pmatrix} \hat{x}\hat{y} \\ -\hat{x}^2 \\ 0  \end{pmatrix} , 
 \begin{pmatrix} \hat{y}^2 \\ -\hat{x}\hat{y} \\ 0 \end{pmatrix},
  \begin{pmatrix} \hat{x}\hat{y}^2 \\ -\hat{x}^2\hat{y} \\ 0 \end{pmatrix},
   \begin{pmatrix} \hat{x}\hat{y}\hat{z} \\ -\hat{x}^2\hat{z} \\ 0 \end{pmatrix},
    \begin{pmatrix} \hat{y}^2\hat{z} \\ -\hat{x}\hat{y}\hat{z} \\ 0 \end{pmatrix},
     \begin{pmatrix} \hat{x}\hat{y}^2\hat{z} \\ -\hat{x}^2\hat{y}\hat{z} \\ 0 \end{pmatrix}  
    \end{array}\right\}.\label{x x B elements}
\end{align}
Now, we define the enhanced Raviart-Thomas space $\ERT_0$ on $\hat{E}$ as 
\begin{align}
\ERT_0(\hat{E})=&\mathcal{RT}_0(\hat{E})+\text{curl}(\mathbf{\hat{x}} \times \mathbf{\mathcal{B}}(\hat{E}))\nonumber\\
=&\text{span} \left\{\begin{array}{@{} c @{}}        \begin{pmatrix} 1 \\ 0 \\ 0 \end{pmatrix} , 
 \begin{pmatrix} \hat{x} \\ 0 \\ 0 \end{pmatrix},
  \begin{pmatrix} 0 \\ 1 \\ 0 \end{pmatrix},
   \begin{pmatrix} 0 \\ \hat{y} \\ 0 \end{pmatrix},
    \begin{pmatrix} 0 \\ 0 \\ 1 \end{pmatrix},
     \begin{pmatrix} 0 \\ 0 \\ \hat{z} \end{pmatrix}  
    \end{array}\right\}\nonumber\\
    &+\text{span} \left\{\begin{array}{@{} c @{}}        \begin{pmatrix} -3\hat{y} \\ 0 \\ 0 \end{pmatrix} , 
 \begin{pmatrix} -3\hat{z} \\ 0 \\ 0 \end{pmatrix},
  \begin{pmatrix} -4\hat{y}\hat{z} \\ 0 \\ 0 \end{pmatrix},
   \begin{pmatrix} -3\hat{x}\hat{y} \\ \hat{y}^2 \\ \hat{y}\hat{z} \end{pmatrix},
    \begin{pmatrix} -3\hat{x}\hat{z} \\ \hat{y}\hat{z} \\ \hat{z}^2 \end{pmatrix},
     \begin{pmatrix} -4\hat{x}\hat{y}\hat{z} \\  \hat{y}^2 \hat{z} \\ \hat{y} \hat{z}^2 \end{pmatrix}  
    \end{array}\right\}\nonumber\\
    &+\text{span} \left\{\begin{array}{@{} c @{}}        \begin{pmatrix} 0 \\ -3\hat{x} \\ 0  \end{pmatrix} , 
 \begin{pmatrix} 0 \\ -3\hat{z} \\ 0 \end{pmatrix},
  \begin{pmatrix} 0 \\ -4\hat{x}\hat{z} \\ 0 \end{pmatrix},
   \begin{pmatrix} \hat{x}^2 \\ -3\hat{x}\hat{y} \\ \hat{x}\hat{z} \end{pmatrix},
    \begin{pmatrix} \hat{x}\hat{z} \\ -3\hat{y}\hat{z} \\ \hat{z}^2 \end{pmatrix},
     \begin{pmatrix} \hat{x}^2\hat{z} \\ -4\hat{x}\hat{y}\hat{z} \\ \hat{x}\hat{z}^2 \end{pmatrix}  
    \end{array}\right\}\nonumber\\
&+\text{span} \left\{\begin{array}{@{} c @{}}        \begin{pmatrix} 0 \\ 0 \\ -3\hat{x}  \end{pmatrix} , 
 \begin{pmatrix} 0 \\ 0 \\ -3\hat{y} \end{pmatrix},
  \begin{pmatrix} 0 \\ 0 \\ -4\hat{x}\hat{y} \end{pmatrix},
   \begin{pmatrix} \hat{x}^2 \\ \hat{x}\hat{y} \\ -3\hat{x}\hat{z} \end{pmatrix},
    \begin{pmatrix} \hat{x}\hat{y} \\ \hat{y}^2 \\ -3\hat{y}\hat{z} \end{pmatrix},
     \begin{pmatrix} \hat{x}^2\hat{y} \\ \hat{x}\hat{y}^2 \\ -4\hat{x}\hat{y}\hat{z} \end{pmatrix}  
    \end{array}\right\}. \label{enhanced-RT}
\end{align}

The elasticity finite element spaces on the reference cube are defined as
\begin{align}
\hat{\X}(\Eh) = \left( \ERT_0(\hat{E})\right)^3, \quad
    \Vh(\Eh) = \Qc_0(\Eh,\R^3), \quad 
    \hat{\W}^k(\Eh) = \Qc_k(\Eh,\mathbb{N}) \mbox{ for } k = 0,1, \label{ref-spaces}
\end{align}
where $\Qc_k$ denotes the space of polynomials of degree at most $k$ in each variable. We note that each row of an element of $\X_h(\Eh)$ is a vector in the enhanced Raviart Thomas space $\ERT_0(\Eh)$. It holds that $\dvr \Xh(\Eh) = \Vh(\Eh)$ and for all $\th \in \Xh
(\Eh)$, $\th\, \hat{n}_{\hat{f}} \in \Qc_1(\hat{f},\R^3)$ on any face $\hat{f}$ of $\Eh$.
It is proven \cite{high-order-mfmfe} that the degrees of freedom of
$\ERT_0(\Eh)$ can be chosen as the values of the normal components
at any four points on each of the six faces $\hat{f} \subset \partial \Eh$. In this
work we choose these points to be the vertices of $\hat{f}$, see Figure
\ref{elements}. This is motivated by the vertex quadrature rule,
introduced in the next section. The spaces on any element $E \in \Tc_h$ 
are defined via the transformations
\begin{align*}
 \tau \overset{\Pc}{\leftrightarrow} \hat{\t} : 
\t^T = \frac{1}{J_E} \DF_E \hat{\t}^T \circ F_E^{-1}, 
\quad
    v \leftrightarrow \hat{v} : v = \hat{v} \circ F_E^{-1}, \quad
    w \leftrightarrow \hat{w} : w = \hat{w} \circ F_E^{-1},
\end{align*}
where $\hat\tau \in \Xh(\Eh)$, $\hat v \in \Vh(\Eh)$, and
$\hat{w} \in \hat\W^k(\Eh)$. Note that the Piola 
transformation (applied row-by-row) is used for $\Xh(\Eh)$. It
satisfies, for all sufficiently smooth $\tau \overset{\Pc}{\leftrightarrow} \hat{\t}$, and $v \leftrightarrow \hat{v}$,
\begin{equation}
(\dvr \t, v)_E = (\dvr \hat{\t}, \hat{v})_{\Eh} \quad \text{and} \quad \langle \t\, n_f, v \rangle _f 
= \langle \hat{\t}\, \hat{n}_{\hat f}, \hat{v} \rangle _{\hat f}.\label{prop-piola}
\end{equation}
The spaces on $\Tc_h$ are defined by
\begin{align} \label{spaces}
    \X_h &= \{\tau \in \X: \t|_E \overset{\Pc}{\leftrightarrow} \hat{\t},\: \hat{\t} \in \hat{\X}(\Eh) \quad \forall E\in\mathcal{T}_h\}, \nonumber \\
    V_h &= \{v \in V: v|_E \leftrightarrow \hat{v},\: \hat{v} \in \hat{V}(\Eh) \quad \forall E\in\mathcal{T}_h\}, \\
    \W_h^0 & = \{w \in \W: w|_E \leftrightarrow \hat{w},
\: \hat{w} \in \hat{\W}^0(\Eh) \quad \forall E\in\mathcal{T}_h\}, \nonumber \\
\W_h^1 &  = \{w \in \mathcal{C}(\Om,\N) \subset \W: w|_E \leftrightarrow \hat{w},\: \hat{w} \in \hat{\W}^1(\Eh) \quad \forall E\in\mathcal{T}_h\}. \nonumber
\end{align}
Note that $\W_h^1 \subset H^1(\Omega)$, since it contains continuous
piecewise $\Qc_1$ functions.

\begin{figure}	

	\setlength{\unitlength}{1.0mm}
\scalebox{.7}{
	\begin{picture}(200,55)(-60,0)
	\thicklines
	\Line(5,5)(35,5)
	\Line(5,5)(5,35)    
	\Line(5,35)(35,35)
	\Line(35,35)(35,5)
	\Line(5,35)(20,50)
	\Line(35,35)(50,50)  
	\Line(20,50)(50,50)
	\Line(50,20)(50,50) 
	\Line(35,5)(50,20)   
	\DashLine(5,5)(20,20)    
	\DashLine(50,20)(20,20)
	\DashLine(20,50)(20,20)     
	
	\put(6,1){$\rh_{1}$}
	\put(36,1.5){$\rh_{2}$}
	\put(36.5,31){$\rh_{6}$}
	\put(5.5,31){$\rh_{5}$}
	\put(20,16.5){$\rh_{4}$}
	\put(51,16){$\rh_{3}$}
	\put(51,51.5){$\rh_{7}$}
	\put(16,52){$\rh_{8}$}
	\put(50,20){\vector(1,0){10}} 
	\put(50,50){\vector(1,0){10}} 
	\put(35,5){\vector(1,0){10}} 
	\put(35,35){\vector(1,0){10}} 
	\put(35,35){\vector(0,1){10}} 
	\put(5,35){\vector(0,1){10}} 
	\put(50,50){\vector(0,1){10}} 
	\put(20,50){\vector(0,1){10}} 
	\put(5,5){\vector(-1,-1){5}} 
	\put(35,5){\vector(-1,-1){5}} 
	\put(5,35){\vector(-1,-1){5}} 
	\put(35,35){\vector(-1,-1){5}} 
	\put(5,5){\circle{2}}
	\put(35,5){\circle{2}}
	\put(5,35){\circle{2}}
	\put(35,35){\circle{2}}
	\put(50,50){\circle{2}}
	\put(20,50){\circle{2}}
	\put(50,20){\circle{2}}
	\put(23,23){\circle*{2}}
	\put(27,23){\circle*{2}}
	\put(31,23){\circle*{2}}

	\put(87,30){\vector(1,0){5}}
	\put(95,29){\text{stress}}
	\put(90,25){\circle*{2}}
	\put(95,24){\text{displacement}}
	\put(90,20){\circle{2}}
	\put(95,19){\text{rotation}}

	\end{picture}
}
	\caption{Degrees of freedom of $\X_h\times V_h\times \W^1_h$.}
	\label{elements}
\end{figure}
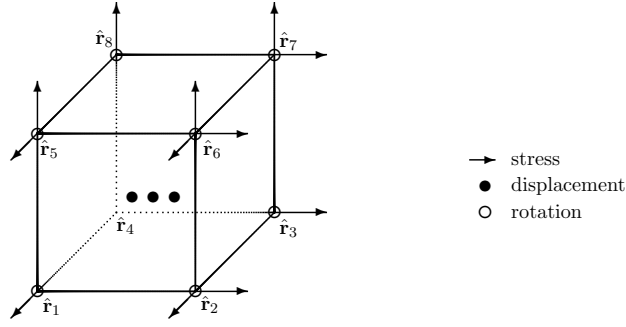

\subsection{A quadrature rule}

Let $\varphi$ and $\psi$ be element-wise continuous functions on $\Omega$. We
denote by $(\varphi,\psi)_Q$ the application of the element-wise vertex quadrature rule for computing $(\varphi,\psi)$.
The integration on any element $E$ is performed by mapping to the
reference element $\Eh$. For $\tau, \, \chi \in \X_h$, we have
\begin{align*}
    \int_{E} A\tau : \chi \, d\x
= \int_{\Eh} \Ah\, \frac{1}{J_E} \th \DF_E^T : \frac{1}{J_E}\ch \DF_E^T\, J_E\, d\hat\x 
= \int_{\Eh} \Ah \th \frac{1}{J_E} \DF^T_E : \ch \DF^T_E\, d\hat\x.
\end{align*}
The quadrature rule on an element $E$ is then defined as 
\begin{equation}
(A\tau,\chi)_{Q,E} \equiv
\left(\Ah \th \frac{1}{J_E} \DF^T_E, \ch \DF^T_E\right)_{\hat Q,\hat E}
\equiv \frac{|\Eh|}{8} \sum_{i=1}^{8} 
\Ah(\rh_i) \th(\rh_i) \frac{1}{J_E(\rh_i)} \DF^T_E(\rh_i) 
: \ch(\rh_i) \DF^T_E(\rh_i). \label{quad-rule-def}
\end{equation}
The global quadrature rule is defined as $(A\tau,\chi)_Q \equiv\sum_{E \in \Tc_h} (A\tau, \chi)_{Q,E}$. 
 
We also employ the vertex quadrature rule for the stress-rotation
bilinear forms in the case of trilinear
rotations. For $\tau \in \X_h,\, w \in \W^1_h$, we have
\begin{equation}
(\t, w)_{Q,E} \equiv 
\left(\frac{1}{J_E}\th\DF_E^T,\hat{w}J_E\right)_{\hat{Q},\hat{E}} 
\equiv \frac{|\hat{E}|}{8}\sum_{i=1}^{8}\th(\rh_i)\DF_E^T(\rh_i)
:\hat{w}(\rh_i). \label{def}
\end{equation}

The next lemma shows that the quadrature rule \eqref{quad-rule-def} produces a
coercive bilinear form.
\begin{lemma}\label{coercivity-lemma}
There exist constants $0 < \alpha_0 \le \alpha_1$ independent of $h$ such that
\begin{equation}\label{coercivity}
\alpha_0 \|\tau\|^2 \le \inp[A\tau]{\tau}_Q \le \alpha_1 \|\tau\|^2 \quad 
\forall \tau\in\X_h.
\end{equation}
Furthermore, $(w,w)^{1/2}_Q$ is a norm in $\W^1_h$ equivalent to
$\|\cdot\|$, and $\forall \, \tau \in \X_h$, $w \in \W^1_h$,
$(\tau,w)_Q \leq C \|\tau\|\|w\|$.
\end{lemma}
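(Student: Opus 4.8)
The plan is to prove all three statements by passing to the reference cube $\Eh$, where each claim reduces to a finite-dimensional norm equivalence, and then to transfer back using the scaling bounds \eqref{scaling-of-mapping}. A simplification I will exploit throughout is that the mapping \eqref{mapping} is affine, so $\DF_E$ and $J_E$ are constant on each $E$; hence in \eqref{quad-rule-def} and \eqref{def} the geometric factors $\tfrac{1}{J_E}\DF_E^T$ and $\DF_E^T$ pull out of the vertex sums, and the only quantities actually sampled at the vertices $\rh_i$ are the polynomials $\th,\ch,\wh$ and the tensor $\Ah$. I will also use that $\Ah = A\circ F_E$ inherits the boundedness and uniform positive-definiteness of $A$ as an operator $\M\to\M$, and that shape-regularity gives $|\th\,\DF_E^T|\sim h_E|\th|$ together with $J_E\sim h_E^3$.

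For \eqref{coercivity} I would first use the uniform positive-definiteness of $\Ah$ to reduce $(A\tau,\tau)_{Q,E}$, up to $h$-independent constants, to $\tfrac{1}{J_E}\,\tfrac{|\Eh|}{8}\sum_{i}|\th(\rh_i)\DF_E^T|^2$. The core step is then the finite-dimensional equivalence
\[
\frac{|\Eh|}{8}\sum_{i=1}^8 |\th(\rh_i)|^2 \;\sim\; \|\th\|_{\Eh}^2 \qquad \forall\, \th \in \Xh(\Eh),
\]
which I would establish by showing that $\th \mapsto (\th(\rh_1),\dots,\th(\rh_8))$ is injective on $\Xh(\Eh)=(\ERT_0(\Eh))^3$: if all eight matrix values vanish at the vertices, then in particular every normal-component degree of freedom of each row vanishes, so $\th=0$ by the unisolvence of the vertex degrees of freedom of $\ERT_0$ proved in \cite{high-order-mfmfe}. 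As both sides are norms on a fixed finite-dimensional space, equivalence follows with constants depending only on $\Eh$. Combining this with $|\th\,\DF_E^T|\sim h_E|\th|$, $J_E\sim h_E^3$, and the identity $\|\tau\|_E^2=\int_{\Eh}\tfrac{1}{J_E}|\th\,\DF_E^T|^2\,d\hat\x\sim h_E^{-1}\|\th\|_{\Eh}^2$ yields $(A\tau,\tau)_{Q,E}\sim\|\tau\|_E^2$, and summing over $E\in\Tc_h$ gives \eqref{coercivity}.

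For the second statement I would argue analogously. Since $w\leftrightarrow\wh$ by composition (not Piola), $(w,w)_{Q,E}=J_E\,\tfrac{|\Eh|}{8}\sum_i|\wh(\rh_i)|^2$, and because every entry of $\wh\in\Qc_1(\Eh,\N)$ is a trilinear function, uniquely determined by its eight vertex values, the map $\wh\mapsto(\wh(\rh_i))_i$ is injective on $\hat{\W}^1(\Eh)$. Hence $\tfrac{|\Eh|}{8}\sum_i|\wh(\rh_i)|^2\sim\|\wh\|_{\Eh}^2$, and $\|w\|_E^2\sim J_E\|\wh\|_{\Eh}^2$ gives $(w,w)_{Q,E}\sim\|w\|_E^2$, so $(w,w)_Q^{1/2}$ is a norm on $\W^1_h$ equivalent to $\|\cdot\|$. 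Finally, for the bound on $(\tau,w)_Q$ I would apply the discrete Cauchy–Schwarz inequality to the vertex sum in \eqref{def},
\[
|(\tau,w)_{Q,E}| \le \Big(\tfrac{|\Eh|}{8}\textstyle\sum_i |\th(\rh_i)\DF_E^T|^2\Big)^{1/2}\Big(\tfrac{|\Eh|}{8}\textstyle\sum_i |\wh(\rh_i)|^2\Big)^{1/2},
\]
and control the two factors by $h_E^3\|\tau\|_E^2$ and $h_E^{-3}\|w\|_E^2$ respectively, using the equivalences and scalings above; the powers of $h_E$ cancel, giving $(\tau,w)_{Q,E}\le C\|\tau\|_E\|w\|_E$, and a Cauchy–Schwarz sum over elements completes the proof.

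The main obstacle is the finite-dimensional equivalence for the stress space, i.e. verifying that vertex evaluation is positive definite (a genuine norm) on $\ERT_0(\Eh)$; everything else is bookkeeping with \eqref{scaling-of-mapping}. This positivity is exactly where the choice of vertices as the degrees of freedom and the unisolvence result of \cite{high-order-mfmfe} enter, and it is what makes the vertex quadrature rule coercive rather than merely positive semidefinite.
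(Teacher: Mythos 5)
Your proposal is correct and is essentially the argument the paper invokes: the paper's proof is a one-line citation to the analogous scaling/norm-equivalence lemma on simplices (\cite[Lemma~2.2]{msmfe-simpl}), and your write-up carries out exactly that standard argument for cuboids — reduction to the reference element via the (affine, hence constant-Jacobian) Piola and composition maps, the finite-dimensional equivalence of the vertex-value seminorm with the $L^2$ norm (with injectivity on $\ERT_0(\Eh)$ following from the fact that the three face normals meeting at each vertex span $\R^3$, so the eight vector values recover all 24 normal-component degrees of freedom), and the cancellation of the $h_E^{3}$ and $h_E^{-3}$ scalings in the mixed bound. No gaps.
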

\begin{proof}
The proof follows from the argument in \cite[Lemma~2.2]{msmfe-simpl}
\end{proof}

\section{The multipoint stress mixed finite element method with constant 
rotations (MSMFE-0)}

Our first method, referred to as MSMFE-0, is: 
find $\sigma_h \in \X_h,\, u_h \in V_h$, and $\g_h \in \W_h^0$ such that
\begin{align}
(A \s_h,\t)_Q + (u_h,\dvr{\t}) + (\g_h, \t) &= 0, 
& \tau &\in \X_h, \label{h-weak-P0-1}\\
(\dvr \sigma_h,v) &= (f,v), & v &\in V_h, \label{h-weak-P0-2}\\
(\sigma_h,w) &= 0, &w &\in \W_h^0. \label{h-weak-P0-3}
\end{align}

\begin{theorem}
The MSFMFE-0 method \eqref{h-weak-P0-1}--\eqref{h-weak-P0-3} has a unique solution.
\end{theorem}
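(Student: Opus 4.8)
The plan is to establish well-posedness of the square linear system \eqref{h-weak-P0-1}--\eqref{h-weak-P0-3} by showing uniqueness of the solution, which for a finite-dimensional square system is equivalent to existence. First I would observe that the system is finite-dimensional and that the number of equations equals the number of unknowns, so it suffices to verify that the only solution to the homogeneous problem (with $f = 0$) is the trivial one. To that end, I would set $f = 0$ and test \eqref{h-weak-P0-1} with $\tau = \s_h$, \eqref{h-weak-P0-2} with $v = u_h$, and \eqref{h-weak-P0-3} with $w = \g_h$, then add the three equations. The coupling terms $(u_h, \dvr \s_h)$ and $(\g_h, \s_h)$ cancel against $(\dvr \s_h, u_h)$ and $(\s_h, \g_h)$, leaving $(A\s_h, \s_h)_Q = 0$. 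By the coercivity estimate \eqref{coercivity} of Lemma~\ref{coercivity-lemma}, this forces $\s_h = 0$.

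Next I would recover $u_h = 0$ and $\g_h = 0$ from the stress equation \eqref{h-weak-P0-1}, which with $\s_h = 0$ reduces to $(u_h, \dvr \tau) + (\g_h, \tau) = 0$ for all $\tau \in \X_h$. This is an inf-sup type condition: I need to show that the map $\tau \mapsto ((u_h, \dvr \tau), (\g_h, \tau))$ cannot vanish identically unless $u_h = 0$ and $\g_h = 0$. Equivalently, I would invoke the inf-sup stability of the pair $\X_h \times (V_h \times \W_h^0)$, namely that there is a constant $\beta > 0$ independent of $h$ with
\begin{equation*}
\inf_{(v,w) \in V_h \times \W_h^0} \sup_{\tau \in \X_h} \frac{(v, \dvr \tau) + (w, \tau)}{\|\tau\|_{\dvr}\,(\|v\| + \|w\|)} \ge \beta.
\end{equation*}
As indicated in the introduction, this stability for the triple based on $\ERT_0 \times Q_0 \times Q_0$ follows from the argument in \cite{awanou2013rectangular} for the analogous $\BDM_1 \times Q_0 \times Q_0$ method on cuboid grids; the key ingredients are that $\X_h \times V_h$ is a stable Darcy pair (so the divergence equation controls $u_h$) and the weak-symmetry construction controls $\g_h$.

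The main obstacle, and the step requiring the most care, is the inf-sup condition for the rotation $\g_h$, since the coercivity argument alone only eliminates the stress. The subtlety is that the numerical quadrature in $(A\s_h, \s_h)_Q$ replaces the exact $L^2$ inner product, so I must make sure that coercivity is used in its quadrature form as guaranteed by Lemma~\ref{coercivity-lemma}, while the coupling terms $(u_h, \dvr \tau)$ and $(\g_h, \tau)$ in \eqref{h-weak-P0-1} are exact (no quadrature), so the cancellation against \eqref{h-weak-P0-2}--\eqref{h-weak-P0-3} is clean. Once $\s_h = 0$ is established, the remaining inf-sup argument is independent of the quadrature and can be imported directly from the cited stability result for weak-symmetry elasticity on cuboid grids. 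I would therefore structure the proof as: (i) square system, so uniqueness implies existence; (ii) energy argument plus Lemma~\ref{coercivity-lemma} gives $\s_h = 0$; (iii) inf-sup stability of $\X_h \times V_h \times \W_h^0$, cited from \cite{awanou2013rectangular}, gives $u_h = \g_h = 0$.
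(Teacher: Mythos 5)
Your proposal is correct and rests on exactly the same two ingredients as the paper's proof: the coercivity of the quadrature bilinear form from Lemma~\ref{coercivity-lemma} and the inf-sup condition for $\X_h \times (V_h \times \W_h^0)$ imported from \cite{awanou2013rectangular} via the inclusion $\BDM_1 \subset \ERT_0$. The only cosmetic difference is that the paper cites the abstract Babu\v{s}ka--Brezzi conditions \ref{S1-P0}--\ref{S2-P0} while you unfold them into the explicit homogeneous-system energy argument, which is the standard way those conditions are applied.
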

\begin{proof}
Using classical stability theory of mixed finite element methods,
the required Babu\v{s}ka-Brezzi stability conditions \cite{Boffi-Brezzi-Fortin} are:
\begin{enumerate}[label={\bf(S\arabic*)}]
	\item \label{S1-P0} There exists $c_1 > 0$ such that 
	\begin{align}
	c_1\| \tau \|_{\dvrg} \le \inp[A\t]{\t}^{1/2}_{Q}
	\end{align} 
	for $\tau \in \X_h$ satisfying $\inp[\dvr \t]{v} = 0$ and $\inp[\t]{w} = 0$ for all $(v,w) \in V_h\times \W_h^0$.
	\item \label{S2-P0} There exists $c_2 > 0$ such that 
	\begin{align}
	\inf_{0\neq(v,w)\in V_h\times \W_h^0 } \sup_{0\neq \t\in\X_h} \frac{\inp[\dvr\t]{v}+\inp[\t]{w}}{\| \tau \|_{\dvrg} \left( \|v\| + \|w\| \right)}  \ge c_2.  \label{inf-sup-P0}
	\end{align}
\end{enumerate}
Using \eqref{prop-piola} and $\dvrg \Xh(\Eh) = \Vh(\Eh)$, 
the condition $(\dvrg \tau,v)=0,\, \forall v \in V_h$ implies that 
$\dvr \tau =0$. Then \ref{S1-P0} follows from \eqref{coercivity}. The inf-sup condition \ref{S2-P0} follows from the argument in \cite{awanou2013rectangular} since the $\mathcal{BDM}_1$ space used in Section~4 of that paper is contained in our enhanced Raviart-Thomas space $\mathcal{ERT}_0$ given in \eqref{enhanced-RT}.
\end{proof}

\subsection{Reduction to a cell-centered displacement-rotation system}

The algebraic system that arises from 
\eqref{h-weak-P0-1}--\eqref{h-weak-P0-3} is of the form
\begin{equation}\label{sp-matrix}
    \begin{pmatrix} \As & \Au^T & \Ag^T \\ - \Au & 0   & 0 \\ - \Ag & 0   & 0 \end{pmatrix} 
    \begin{pmatrix} \sigma \\ u \\ \g \end{pmatrix} = 
    \begin{pmatrix} 0 \\ - f \\ 0 \end{pmatrix},
\end{equation}
where $(\As)_{ij} = (A\tau_j,\tau_i)_Q$, $(\Au)_{ij} = (\dvr\tau_j,
v_i)$, and $(\Ag)_{ij} = (\tau_j,w_i)$.  The method can be reduced
to solving a cell-centered displacement-rotation system as
follows. Since the stress degrees of freedom are the four normal components per face evaluated at
the vertices, see Figure~\ref{elements}, the basis
functions associated with a vertex are zero at all other vertices. Therefore the
quadrature rule $(A\sigma_h,\tau)_Q$ decouples the degrees of freedom
associated with a vertex from the rest of the degrees of freedom. As a result,
the matrix $\As$ is block-diagonal with $36\times 36$ blocks associated with vertices, see Figure \ref{stress_elements}. Lemma~\ref{coercivity-lemma} implies that the blocks are symmetric and 
positive definite. Therefore the stress $\sigma_h$ can be easily eliminated by solving small local
systems, resulting in the cell-centered displacement-rotation system 
\begin{equation}\label{msmfe0-system}
    \begin{pmatrix} \Au\As^{-1}\Au^T & \Au\As^{-1}\Ag^T \\ 
\Ag\As^{-1}\Au^T & \Ag\As^{-1}\Ag^T \end{pmatrix} 
    \begin{pmatrix} u \\ \g \end{pmatrix} =
    \begin{pmatrix} \tilde{f} \\ \tilde{h} \end{pmatrix}.
\end{equation}
The displacement and rotation stencils for an element $E$ include all 
elements that share a vertex with $E$. The matrix in 
\eqref{msmfe0-system} is symmetric. Furthermore, for any
$\begin{pmatrix} v^T & w^T \end{pmatrix} \neq 0$, 
\begin{equation}\label{spd-matrix}
\begin{pmatrix} v^T & w^T \end{pmatrix}
\begin{pmatrix} \Au\As^{-1}\Au^T & \Au\As^{-1}\Ag^T 
\\ \Ag\As^{-1}\Au^T & \Ag\As^{-1}\Ag^T \end{pmatrix}
\begin{pmatrix} v \\ w \end{pmatrix} 
= (\Au^Tv + \Ag^Tw)^T\As^{-1}(\Au^Tv + \Ag^Tw) > 0,
\end{equation}
due to the inf-sup condition \ref{S2-P0}, which implies that the matrix
is positive definite.

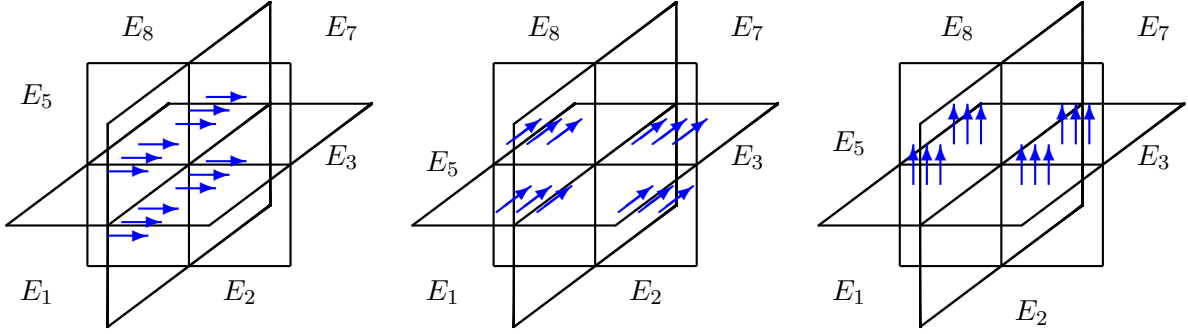
\begin{figure}
    \centering
    \setlength{\unitlength}{.9mm}
    \begin{picture}(160,45)(10,0)
    
        \thicklines
        \put(0,0){
        \put(10,10){\line(1,0){30}}
        \put(10,10){\line(0,1){30}}
        \put(40,10){\line(0,1){30}}
        \put(10,40){\line(1,0){30}}
        \put(10,25){\line(1,0){30}}
        \put(25,10){\line(0,1){30}}
        \put(-2,16){\line(1,0){30}}
        \put(22,34){\line(1,0){30}}
        \Line(22,34)(-2,16)  
        \Line(52,34)(28,16)  
        \Line(37,34)(13,16) 
        \Line(37,49)(13,31) 
        \Line(37,19)(13,1) 
        \Line(37,19)(37,49) 
        \Line(13,1)(13,31) 
        \put(0,5){$E_1$}
        \put(30,5){$E_2$}
        \put(45,25){$E_3$}
        \put(45,44){$E_7$}
        \put(15,44){$E_8$}
        \put(0,34){$E_5$}
        \color{blue}
        \put(17.5,28){\vector(1,0){6}}      
        \put(13,24){\vector(1,0){6}}       
        \put(15,26){\vector(1,0){6}}      
        \put(23,31){\vector(1,0){6}} 	   
        \put(25,33){\vector(1,0){6}} 	 
        \put(27.5,35){\vector(1,0){6}} 	   
        \put(17.5,18.5){\vector(1,0){6}}      
        \put(13,14.5){\vector(1,0){6}}       
        \put(15,16.5){\vector(1,0){6}}      
        \put(23,21.5){\vector(1,0){6}} 	   
        \put(25,23.5){\vector(1,0){6}} 	 
        \put(27.5,25.5){\vector(1,0){6}} 
        \color{black}
        }
        
        \thicklines
        \put(60,0){
        \put(10,10){\line(1,0){30}}
        \put(10,10){\line(0,1){30}}
        \put(40,10){\line(0,1){30}}
        \put(10,40){\line(1,0){30}}
        \put(10,25){\line(1,0){30}}
        \put(25,10){\line(0,1){30}}
        \put(-2,16){\line(1,0){30}}
        \put(22,34){\line(1,0){30}}
        \Line(22,34)(-2,16)  
       	\Line(52,34)(28,16)  
       	\Line(37,34)(13,16) 
        \Line(37,49)(13,31) 
       	\Line(37,19)(13,1) 
       	\Line(37,19)(37,49) 
       	\Line(13,1)(13,31) 
        \put(0,5){$E_1$}
        \put(30,5){$E_2$}
        \put(45,25){$E_3$}
        \put(45,44){$E_7$}
        \put(15,44){$E_8$}
        \put(0,24){$E_5$}
        \color{blue}
        \put(15,28){\vector(4,3){5}}    
        \put(12,28){\vector(4,3){5}}    
        \put(18,28){\vector(4,3){5}}    
        \put(30.5,28){\vector(4,3){5}}   
        \put(33.5,28){\vector(4,3){5}} 
        \put(36.5,28){\vector(4,3){5}} 
        \put(13.5,18){\vector(4,3){5}}    
        \put(10.5,18){\vector(4,3){5}}    
        \put(16.5,18){\vector(4,3){5}}    
        \put(28.5,18){\vector(4,3){5}}   
        \put(31.5,18){\vector(4,3){5}} 
        \put(34.5,18){\vector(4,3){5}} 
        \color{black}
        }
        
        \thicklines
        \put(120,0){
        \put(10,10){\line(1,0){30}}
        \put(10,10){\line(0,1){30}}
        \put(40,10){\line(0,1){30}}
        \put(10,40){\line(1,0){30}}
        \put(10,25){\line(1,0){30}}
        \put(25,10){\line(0,1){30}}
        \put(-2,16){\line(1,0){30}}
        \put(22,34){\line(1,0){30}}
       	\Line(22,34)(-2,16)  
       	\Line(52,34)(28,16)  
       	\Line(37,34)(13,16) 
       	\Line(37,49)(13,31) 
       	\Line(37,19)(13,1) 
       	\Line(37,19)(37,49) 
       	\Line(13,1)(13,31) 
       	\put(0,5){$E_1$}
       	\put(27,2){$E_2$}
    	\put(45,25){$E_3$}
    	\put(45,44){$E_7$}
   	    \put(16,44){$E_8$}
        \put(0,27){$E_5$}
        \color{blue}
        \put(12,22){\vector(0,1){6}}     
        \put(14,22){\vector(0,1){6}}     
        \put(16,22){\vector(0,1){6}}     
        \put(28,22){\vector(0,1){6}}    
        \put(30,22){\vector(0,1){6}}   
        \put(32,22){\vector(0,1){6}}   
        \put(18,28){\vector(0,1){6}}    
        \put(20,28){\vector(0,1){6}}  
        \put(22,28){\vector(0,1){6}}  
        \put(34,28){\vector(0,1){6}}  
        \put(36,28){\vector(0,1){6}}  
        \put(38,28){\vector(0,1){6}}  
        \color{black}
        }
    \end{picture}
    \caption{Interactions of the stress degrees of freedom in the MSMFE methods.}
    \label{stress_elements}
\end{figure}

\begin{remark}
The reduced MSMFE-0 system \eqref{msmfe0-system} is significantly smaller than the original system \eqref{sp-matrix}. To quantify the computational savings, consider a cuboid grid where the number of elements and vertices are approximately equal, denoted by \( m \). In the original system \eqref{sp-matrix}, there are 36 stress degrees of freedom per vertex, three displacement degrees of freedom per element, and three rotation degrees of freedom per element, resulting in approximately \( 42m \) unknowns. The reduced system \eqref{msmfe0-system} has only approximately \( 6m \) unknowns, representing a significant reduction. Moreover, the reduced system is symmetric and positive definite, allowing for the use of efficient solvers such as the conjugate gradient method or multigrid. In contrast, the original system \eqref{sp-matrix} is indefinite, making such fast solvers inapplicable. Additionally, the cost of solving the local vertex systems required to form \eqref{msmfe0-system} is \( O(m) \), which is negligible for large \( m \) compared to the cost of solving the global systems \eqref{sp-matrix} or \eqref{msmfe0-system} using a Krylov space iterative method, which is at least \( O(m^2) \).

Further reduction in the system \eqref{msmfe0-system} is not possible. In the next section, we develop a method involving continuous trilinear rotations and a vertex quadrature rule applied to the stress-rotation bilinear forms. This allows for further local elimination of the rotation, resulting in a cell-centered system for the displacement only.
\end{remark}

\section{The multipoint stress mixed finite element method with trilinear 
rotations (MSMFE-1)}

In the second method, referred to as MSMFE-1, we take $k=1$ in \eqref{ref-spaces}
and apply the quadrature rule to both the stress
bilinear form and the stress-rotation bilinear forms. The method is: find
$\sigma_h \in \X_h,\, u_h \in V_h$ and $\g_h \in \W_h^1$ such that
\begin{align}
(A \s_h,\t)_Q + (u_h,\dvr{\t}) + (\g_h,\t)_Q &= 0, &\tau &\in \X_h, \label{h-weak-P1-1}\\
(\dvr \sigma_h,v) &= (f,v), &v &\in V_h,\label{h-weak-P1-2}\\
(\sigma_h,w)_Q &= 0, &w &\in \W_h^1. \label{h-weak-P1-3}
\end{align}

We note that the rotation finite element space in the MSMFE-1 method is continuous, which may result in reduced approximation if the rotation $\gamma \in L^2(\Om,\mathbb{N})$ is discontinuous. It is possible to consider a modified MSMFE-1 method based on the scaled rotation $\tilde{\gamma}=A^{-1}\gamma$, which is motivated by the relation $\sigma=A^{-1}\nabla u - A^{-1}\gamma$. This method is better suited for problems with discontinuous compliance tensor $A$, since in this case $\sigma$ is smoother than $A\sigma$, implying that $\tilde{\gamma}$ is smoother than $\gamma$. The resulting method is: find $\sigma_h \in \X_h,\, u_h \in V_h$ and $\tilde{\g}_h \in \W_h^1$ such that
\begin{align}
(A \s_h,\t)_Q + (u_h,\dvr{\t}) + (\tilde{\g}_h,A\t)_Q &= 0, &\tau &\in \X_h, \label{h-weak-P1-1-mod}\\
(\dvr \sigma_h,v) &= (f,v), &v &\in V_h,\label{h-weak-P1-2-mod}\\
(A\sigma_h,w)_Q &= 0, &w &\in \W_h^1. \label{h-weak-P1-3-mod}
\end{align}
In the numerical section we present an example with discontinuous $A$ and $\gamma$ illustrating the advantage of the modified method \eqref{h-weak-P1-1-mod}--\eqref{h-weak-P1-3-mod} for problems with discontinuous coefficients. In order to maintain uniformity of the presentation in relation to MSMFE-0, in the following we present the well-posedness and error analysis for the method \eqref{h-weak-P1-1}--\eqref{h-weak-P1-3}. We note that the
analysis for the modified method \eqref{h-weak-P1-1-mod}--\eqref{h-weak-P1-3-mod} is similar.

\subsection{Well-posedness of the MSMFE-1 method}
The classical stability theory of mixed finite element methods \cite{Boffi-Brezzi-Fortin}
implies the following well-posedness result.

\begin{theorem}\label{thm:msmfe-1-cond}
Assume that: 
\begin{enumerate}[label={\bf(S\arabic*)}]
	\setcounter{enumi}{2}
	\item \label{S3-P1}There exists $c_3 > 0$ such that 
	$$ c_3\| \tau \|_{\dvrg}^2 \le \inp[A\t]{\t}_{Q}, $$
	for $\tau \in \X_h$ satisfying $\inp[\dvr \t]{v} = 0$ and $\inp[\t]{w}_{Q} = 0$ for all $(v,w) \in V_h\times \W_h^1$.
	\item \label{S4-P1}There exists $c_4 > 0$ such that 
	\begin{align}
	\inf_{0\neq(v,w)\in V_h\times \W_h^1 } \sup_{0\neq \t\in\X_h} \frac{\inp[\dvr\t]{v}+\inp[\t]{w}_Q}{\| \tau \|_{\dvrg} \left( \|v\| + \|w\| \right)}  \ge c_4. \label{inf-sup-P1}
	\end{align} 
\end{enumerate}
Then the MSMFE-1 method \eqref{h-weak-P1-1}--\eqref{h-weak-P1-3} has a unique solution.
\end{theorem}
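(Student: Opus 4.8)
The plan is to recognize \eqref{h-weak-P1-1}--\eqref{h-weak-P1-3} as a standard saddle-point problem and to invoke the Babu\v{s}ka--Brezzi theory \cite{Boffi-Brezzi-Fortin}, under which the two hypotheses \ref{S3-P1} and \ref{S4-P1} are precisely the coercivity-on-the-kernel and inf-sup conditions guaranteeing a unique solution. I would first cast the method in the abstract form $a(\s_h,\t) + b(\t,(u_h,\g_h)) = 0$ for all $\t \in \X_h$, and $b(\s_h,(v,w)) = \inp[f]{v}$ for all $(v,w) \in V_h \times \W_h^1$, with
\[
a(\s,\t) = (A\s,\t)_Q, \qquad b(\t,(v,w)) = \inp[\dvr \t]{v} + (\t,w)_Q .
\]
With this identification the kernel $\{\t \in \X_h : b(\t,(v,w)) = 0 \ \forall (v,w)\}$ is exactly the subspace appearing in \ref{S3-P1}, so that hypothesis supplies the required coercivity of $a$ on $\ker b$, while \ref{S4-P1} supplies the required inf-sup condition for $b$.

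To complete the abstract setup I would verify boundedness of both forms. Boundedness of $a$ follows from Lemma~\ref{coercivity-lemma}: since $(A\cdot,\cdot)_Q$ is symmetric and positive semidefinite with $a(\t,\t)\le\alpha_1\|\t\|^2$, the Cauchy--Schwarz inequality for this semi-inner product gives $a(\s,\t)\le\alpha_1\|\s\|\,\|\t\|$. Boundedness of $b$ follows from $\inp[\dvr\t]{v}\le\|\dvr\t\|\,\|v\|$ together with the final estimate $(\t,w)_Q \le C\|\t\|\,\|w\|$ of Lemma~\ref{coercivity-lemma}, which together yield $b(\t,(v,w))\le C\|\t\|_{\dvrg}(\|v\|+\|w\|)$.

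With boundedness of $a$ and $b$ in hand, and with \ref{S3-P1} and \ref{S4-P1} furnishing the two Brezzi conditions, the classical mixed finite element theory \cite{Boffi-Brezzi-Fortin} immediately yields the existence and uniqueness of $(\s_h,u_h,\g_h)$. I do not anticipate any genuine obstacle in this deduction itself, precisely because the statement is \emph{conditional} on \ref{S3-P1} and \ref{S4-P1}. The real difficulty of the analysis is to establish those two conditions for the concrete spaces $\X_h \times V_h \times \W_h^1$ under the vertex quadrature rule; as flagged in the introduction, this is not straightforward because no standard $H^1$-conforming Stokes-stable pairing is available here. That step instead requires constructing the auxiliary $H(\curl)$-conforming space $\Theta_h$, which forms an exact sequence with $\ERT_0$, and proving a curl-based matrix-matrix inf-sup condition for $\Q_h \times \W_h^1$. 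It is there, rather than in the present well-posedness argument, that the substantive work lies.
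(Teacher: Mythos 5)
Your proposal is correct and matches the paper's treatment: the paper simply asserts that the classical Babu\v{s}ka--Brezzi theory yields the result once \ref{S3-P1} and \ref{S4-P1} are assumed, which is exactly the deduction you spell out (with the helpful extra detail of verifying boundedness of both forms via Lemma~\ref{coercivity-lemma}). You also correctly identify that the substantive work of the paper lies in verifying \ref{S4-P1} for the concrete spaces, not in this conditional well-posedness statement.
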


The stability condition \ref{S3-P1} holds, since the spaces $\X_h$ and
$V_h$ are as in the MSMFE-0 method. However, 
\ref{S4-P1} is different, due to the quadrature rule in $\inp[\tau]{w}_Q$, 
and it needs to be verified. The next theorem, which is a modification of \cite[Theorem 4.2]{msmfe-simpl}, provides sufficient conditions for a triple $\X_h\times V_h \times \W_h$ to satisfy \ref{S4-P1}.
\begin{theorem}\label{thm:suff-cond}
Suppose that $S_h \subset H(\dvr;\Omega)$ and $U_h \subset L^2(\Omega)$ satisfy
\begin{align}
\inf\limits_{0\neq r\in U_h} \sup\limits_{0\neq z\in S_h} \frac{(\dvr z,r)}{\| z \|_{\dvr} \|r\| }  \ge c_5, \label{darcy-pair}
\end{align}
that $\Q_h \subset H(\curl;\Omega,\mathbb{M})$ and $\W_h \subset L^2(\Omega,\mathbb{N})$ are such 
that $(\cdot,\cdot)_Q^{1/2}$ is a norm in $\W_h$ equivalent to $\|w\|$ and
\begin{align}
\inf\limits_{0\neq w\in \W_h} \sup\limits_{0\neq q\in \Q_h} 
\frac{(\curl  q,w)_Q}{\| \curl q \| \|w\| }  \ge c_6, \label{stokes-pair} 
\end{align}
and that
	\begin{align}
	\curl \Q_h \subset (S_h)^3. \label{curl-condition}
	\end{align}
Then, $\X_h = (S_h)^3 \subset H(\dvrg;\Omega,\M)$, $V_h = (U_h)^3 \subset L^2(\Omega, \mathbb{R}^3)$, and 
$\W_h\subset L^2(\Omega,\mathbb{N})$ satisfy \ref{S4-P1}.
\end{theorem}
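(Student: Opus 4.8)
The plan is to verify \ref{S4-P1} directly by constructing, for an arbitrary pair $(v,w) \in V_h \times \W_h$, a single test function $\tau \in \X_h$ that controls both $(\dvr \tau, v)$ and $(\tau, w)_Q$ simultaneously, while keeping $\|\tau\|_{\dvr}$ bounded by $\|v\| + \|w\|$. The natural strategy is to split the construction into two pieces that handle the two terms separately. First I would use the Darcy inf-sup condition \eqref{darcy-pair} on each of the three components of $V_h = (U_h)^3$ to produce $z_1 \in S_h^3 = \X_h$ with $(\dvr z_1, v)$ bounded below by $c\|v\|^2$ and $\|z_1\|_{\dvr} \le C\|v\|$; this gives control of the divergence term. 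Second, I would use the curl-based Stokes inf-sup condition \eqref{stokes-pair} to obtain $q \in \Q_h$ with $(\curl q, w)_Q \ge c_6 \|\curl q\|\,\|w\|$ and set $z_2 = \curl q$. By the key hypothesis \eqref{curl-condition}, $z_2 \in (S_h)^3 = \X_h$ is an admissible test function, and crucially $\dvr z_2 = \dvr \curl q = 0$, so $z_2$ contributes nothing to the divergence term and does not spoil the first piece.

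Let me think about how the two pieces interact. The obstacle is that $z_1$, while controlling $(\dvr z_1, v)$, also contributes an uncontrolled cross term $(z_1, w)_Q$ to the rotation pairing, and likewise one must check that $z_2$ does not destroy the lower bound on the divergence term — the latter is automatic since $\dvr z_2 = 0$. I would therefore form the combination $\tau = z_2 + \delta z_1$ for a small parameter $\delta > 0$ to be chosen. Then
\begin{align*}
(\dvr \tau, v) + (\tau, w)_Q
= \delta (\dvr z_1, v) + (z_2, w)_Q + \delta (z_1, w)_Q,
\end{align*}
where $(z_2,w)_Q = (\curl q, w)_Q$ is bounded below using \eqref{stokes-pair}, $\delta(\dvr z_1, v)$ is bounded below using \eqref{darcy-pair}, and the cross term $\delta(z_1,w)_Q$ is absorbed: by the norm-equivalence of $(\cdot,\cdot)_Q^{1/2}$ and the last assertion of Lemma~\ref{coercivity-lemma} (the bound $(\tau,w)_Q \le C\|\tau\|\|w\|$), one has $|\delta(z_1,w)_Q| \le C\delta \|z_1\|\,\|w\| \le C\delta \|v\|\,\|w\|$, which can be controlled by Young's inequality and made subdominant to the two positive contributions for $\delta$ small enough. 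The main technical point is then to choose $\delta$ so that the positive lower bounds on $\|v\|^2$ and $\|w\|^2$ survive after absorbing the cross term.

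The remaining step is to verify the denominator bound $\|\tau\|_{\dvr} \le C(\|v\| + \|w\|)$. Here $\|z_1\|_{\dvr} \le C\|v\|$ comes directly from \eqref{darcy-pair}, and for $z_2 = \curl q$ we have $\|z_2\|_{\dvr}^2 = \|\curl q\|^2 + \|\dvr \curl q\|^2 = \|\curl q\|^2$ since $\dvr \curl q = 0$; the factor $\|\curl q\|$ is controlled by $\|w\|$ because the Stokes inf-sup ratio in \eqref{stokes-pair} is normalized by $\|\curl q\|$ rather than $\|q\|_{\curl}$, which is precisely why the hypothesis is stated in curl-normalized form. Combining these gives $\|\tau\|_{\dvr} \le C(\|v\| + \|w\|)$, and together with the lower bound on the numerator this yields \ref{S4-P1} with an inf-sup constant depending on $c_5$, $c_6$, the constants of Lemma~\ref{coercivity-lemma}, and the chosen $\delta$. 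I expect the main obstacle to be the bookkeeping in the cross-term absorption, namely tracking the dependence of the final constant on $\delta$ and confirming that the two quadratic lower bounds are not simultaneously destroyed; the role of \eqref{curl-condition} and the identity $\dvr \curl q = 0$ in decoupling the divergence piece from the rotation piece is the conceptual heart of the argument.
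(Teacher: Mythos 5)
Your construction is correct and shares the paper's overall architecture --- a Darcy piece $z_1=\eta$ for the divergence term plus a divergence-free piece $\curl q$ (admissible by \eqref{curl-condition}) for the rotation term --- but it diverges from the paper in the one place that matters: how the cross term $(\eta,w)_Q$ is neutralized. The paper does not absorb it with a small parameter. Instead it invokes the surjectivity form of the inf-sup condition \eqref{stokes-pair} (via \cite[Lemma 2]{arnold2015mixed}) to choose $q$ \emph{depending on $\eta$ as well as $w$}, so that $P^Q_{W_h}(\curl q)=w-P^Q_{W_h}(\eta)$, where $P^Q_{W_h}$ is the projection onto $\W_h$ in the quadrature inner product; the cross term is then cancelled exactly and one gets the clean identities $(\dvr\tau,v)=\|v\|^2$ and $(\tau,w)_Q=(w,w)_Q$ with $\tau=\eta+\curl q$, no parameter $\delta$ needed. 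Your route instead picks $q$ from the raw inf-sup inequality (depending only on $w$, normalized so $\|\curl q\|=\|w\|$), forms $\tau=\curl q+\delta\,\eta$, and kills the cross term $\delta(\eta,w)_Q\le C\delta\|v\|\|w\|$ by Young's inequality for $\delta$ small; with the constants tracked as you indicate ($\delta\lesssim c_6 c_5/C^2$) the two quadratic lower bounds survive and the denominator bound $\|\tau\|_{\dvr}\le C(\|v\|+\|w\|)$ goes through since $\dvr\curl q=0$. Both arguments are sound. What the paper's version buys is the absence of any parameter bookkeeping and an inf-sup constant with a transparent dependence on $c_5$, $c_6$; what yours buys is that you never need the right-inverse/surjectivity consequence of \eqref{stokes-pair} or the auxiliary projection $P^Q_{W_h}$, only the inequality itself together with the boundedness $(\tau,w)_Q\le C\|\tau\|\|w\|$ from Lemma~\ref{coercivity-lemma} --- a property the paper's proof also uses implicitly to control $\|P^Q_{W_h}\eta\|$, so neither proof is more demanding in its hypotheses.
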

\begin{proof}
Let $v\in V_h$ and $w\in \W_h$ be given. It follows from \eqref{darcy-pair} that there exists $\eta\in \X_h$ such that 
\begin{align}
(\dvrg \eta,v)=\|v\|^2,\ \ \ \|\eta\|_{\dvrg}\leq C\|v\|.  \label{stblty-1}
\end{align}
Next, from \eqref{stokes-pair} and \cite[Lemma 2]{arnold2015mixed} there exists $q \in Q_h$ such that 
\begin{align}
P^Q_{W_h}(\curl q)=(w-P^Q_{W_h}(\eta)),\ \ \ \|\curl q\| \leq C(\|w\|+\|\eta\|) \leq C(\|w\|+\|v\|) \label{stblty-Q_h},
\end{align}
where $P^Q_{W_h}:L^2(\Omega,\mathbb{M})\rightarrow \W_h$ is the $L^2$-projection with respect to the norm $(\cdot,\cdot)_Q^{1/2}$. Now let
\begin{equation}
\tau=\eta+\curl q \in \X_h. \label{stblty-tau}
\end{equation}
Using \eqref{stblty-1} and \eqref{stblty-tau}, we have
\begin{align}
(\dvrg \tau,v)=(\dvrg \eta,v)=\|v\|^2. \label{stblty-2}
\end{align}
In addition, \eqref{stblty-1}, \eqref{stblty-Q_h} and \eqref{stblty-tau} imply
\begin{align}
\|\tau\|_{\dvrg}\leq C(\|\eta\|_{\dvrg}+\|\curl q\|)\leq C(\|w\|+\|v\|)\label{stblty-3}
\end{align}
and
\begin{align}
(\tau,w)_Q&=(\eta,w)_Q + (\curl q,w)_Q =(P^Q_{W_h}(\eta),w)_Q + (P^Q_{W_h}(\curl q),w)_Q \nonumber
\\&=(P^Q_{W_h}(\eta),w)_Q + (w-P^Q_{W_h} (\eta),w)_Q =(w,w)_Q\geq C\|w\|^2. \label{stblty-4}
\end{align}
Using \eqref{stblty-2},  \eqref{stblty-3}, and \eqref{stblty-4} we obtain 
\begin{align*}
(\dvrg \tau,v)+(\tau,w)_Q\geq c\|\tau\|_{\dvrg}(\|v\|+\|w\|),
\end{align*}
which completes the proof.
\end{proof}

\begin{remark}
Condition \eqref{darcy-pair} states that $S_h\times U_h$ is a stable Darcy pair.
Condition \eqref{stokes-pair} states that $\Q_h\times \W_h$ is a curl-stable matrix pair with quadrature. The framework in Theorem~\ref{thm:suff-cond} differs from the approach in \cite{awanou2013rectangular}. In particular, the argument in \cite{awanou2013rectangular} requires an additional $H(\dvr;\Omega,\M)$-conforming finite element space $R_h$, which forms a stable Darcy pair with $\W_h$ and can be controlled by $\Q_h$. In \cite{awanou2013rectangular}, $\W_h = \W_h^0$ and $R_h$ is chosen to be the $\RT_0$ space. This approach does not extend to the case $\W_h = \W_h^1$.

The argument in Theorem~\ref{thm:suff-cond} is similar to the approaches in \cite{lee2016towards,arnold2015mixed,msmfe-simpl,msmfe-quads},
where the pair $\Q_h\times \W_h$ is Stokes-stable. The proofs in \cite{arnold2015mixed,msmfe-quads} are for quadrilaterals and do not generalize to three dimensions. In \cite{lee2016towards}, the inf-sup condition \eqref{stokes-pair} is formulated using $(\Xi(\dvr S(q)),w)$,
cf. \eqref{as prop3}. This approach is beneficial if all components of $\Q_h$ belong to the same space, in which case only a vector-vector div-based inf-sup condition needs to be verified. However, this is not true in our case and it is more natural to consider directly a curl-based matrix-matrix inf-sup condition.
\end{remark}

In the following we will establish that the spaces $\X_h \times V_h \times \W_h^1$ in the MSMFE-1 method \eqref{h-weak-P1-1}--\eqref{h-weak-P1-3} satisfy Theorem~\ref{thm:suff-cond}.
A key component of the analysis is defining the space $\Q_h$ satisfying \eqref{curl-condition} and \eqref{stokes-pair}. The space $\Q_h$ is typically chosen to be $H^1$-conforming \cite{lee2016towards}. However, $H^1$-conforming construction for $\Q_h$ in our case is not possible and we construct $H(\curl)$-conforming space $\Q_h$. There are three guiding principles in the construction of each row of $\Q_h$, which we denote by $\Theta_h$. One is the condition $\curl \hat\theta \subset \ERT_0(\hat E)$. The second is that $\hat\theta\times \hat n$ on any face $\hat f$ of $\hat E$ is uniquely determined by the degrees of freedom on $\hat f$, which is needed for $H(\curl)$-conformity. The third is that the degrees of freedom are associated with vertices and edges in a way that \eqref{stokes-pair} holds. We start with the vectors with $\curl$ in $\RT_0(\hat E)$, which are
\begin{equation}\label{curl-in-rt0}
\text{span}\{1,\hat x,\hat y,\hat z, \hat y \hat z\}\times \text{span}\{1,\hat x,\hat y,\hat z, \hat x \hat z\} \times \text{span}\{1,\hat x,\hat y,\hat z, \hat x \hat y\}.
\end{equation}
Next, the functions $\mathbf{\hat{x}} \times \mathbf{\mathcal{B}}(\hat{E})$ in \eqref{x x B elements} are natural candidates for elements of $\Theta(\hat E)$. However, there are not enough of them and they all have two non-zero components. Each of the first three vectors in each row of \eqref{x x B elements} can be split into two separate vectors, since the original and split vectors have the same $\curl$. For example, $(0,\hat{y}\hat{z},-\hat{y}^2)^T$ can be split into $(0,\hat{y}\hat{z},0)^T$ and $(0,0,\hat{y}^2)^T$. Similarly, the three pairs of vectors that have $\hat x \hat y \hat z$ in the same component can be split into three vectors. For example, $(-\hat{x}\hat{y}\hat{z},0,\hat{x}^2\hat{y})^T$ and
$(\hat{x}\hat{y}\hat{z},-\hat{x}^2\hat{z},0)^T$ can be split into $(\hat{x}\hat{y}\hat{z},0,0)^T$, $(0,\hat{x}^2\hat{z},0)^T$ and $(0,0,\hat{x}^2\hat{y})^T$. The last vectors in each of the three rows of \eqref{x x B elements} cannot be split. Therefore we have obtained 30 vectors from \eqref{x x B elements}. Combined with the vectors in \eqref{curl-in-rt0}, this gives 45 vectors. In order to obtain a space with enough degrees of freedom for $H(\curl)$-conformity, we need three more vectors. They have to be curl-free and they should not change the space of $\hat\theta\times \hat n$ on any face $\hat f$. We choose them to be $\text{grad}\{\hat x^2\hat y^2\hat z,\hat x^2\hat y\hat z^2, \hat x\hat y^2\hat z^2\}$. We arrive at the following space on $\hat{E}$:
\begin{align}
& \Theta(\hat{E})= \text{span}   \left\{\begin{array}{@{} c @{}} \nonumber
    1,\hat{x},\hat{y},\hat{z},\hat{x}\hat{y},\hat{x}\hat{z},\hat{y}\hat{z}, \hat{x}\hat{y}\hat{z},\hat{y}^2,\hat{z}^2,\hat{x}\hat{y}^2,\hat{x}\hat{z}^2,\hat{y}^2\hat{z},\hat{y}\hat{z}^2\\
    1,\hat{x},\hat{y},\hat{z},\hat{x}\hat{y},\hat{x}\hat{z},\hat{y}\hat{z}, \hat{x}\hat{y}\hat{z},\hat{x}^2,\hat{z}^2,\hat{x}^2\hat{y},\hat{y}\hat{z}^2,\hat{x}^2\hat{z},\hat{x}\hat{z}^2 \\
    1,\hat{x},\hat{y},\hat{z},\hat{x}\hat{y},\hat{x}\hat{z},\hat{y}\hat{z}, \hat{x}\hat{y}\hat{z},\hat{x}^2,\hat{y}^2,\hat{x}^2\hat{z},\hat{y}^2\hat{z},\hat{x}^2\hat{y},\hat{x}\hat{y}^2
  \end{array}\right\} \\
  & \quad +\text{span} \left\{\begin{array}{@{} c @{}}        \begin{pmatrix} 0 \\ \hat{x}\hat{y}\hat{z}^2 \\ -\hat{x}\hat{y}^2 \hat{z} \end{pmatrix} , 
 \begin{pmatrix} -\hat{x}\hat{y}\hat{z}^2 \\ 0 \\ \hat{x}^2\hat{y} \hat{z} \end{pmatrix},
  \begin{pmatrix} \hat{x}\hat{y}^2\hat{z} \\ -\hat{x}^2\hat{y}\hat{z} \\ 0 \end{pmatrix},
   \begin{pmatrix} 2\hat{x}\hat{y}^2\hat{z} \\ 2\hat{x}^2\hat{y}\hat{z} \\ \hat{x}^2\hat{y}^2 \end{pmatrix},
    \begin{pmatrix} 2\hat{x}\hat{y}\hat{z}^2 \\ \hat{x}^2\hat{z}^2 \\ 2\hat{x}^2\hat{y} \hat{z} \end{pmatrix},
     \begin{pmatrix} \hat{y}^2\hat{z}^2 \\ 2\hat{x}\hat{y}\hat{z}^2 \\ 2\hat{x}\hat{y}^2 \hat{z} \end{pmatrix}  
    \end{array}\right\}.\label{theta ref vector}
\end{align}
The curls of the elements of $\Theta(\hat{E})$ are
\begin{align}
& \curl \Theta(\hat{E})= \text{span}   \left\{\begin{array}{@{} c @{}} \nonumber
    1,\hat{y},\hat{z},\hat{y}\hat{z}\\
    1,\hat{x},\hat{z},\hat{x}\hat{z} \\
    1,\hat{x},\hat{y},\hat{x}\hat{y}
  \end{array}\right\} \\
  & \quad +\text{span} \left\{\begin{array}{@{} c @{}}        \begin{pmatrix} 0 \\ \hat{y} \\ -\hat{z} \end{pmatrix} , 
 \begin{pmatrix} -\hat{x} \\ 0 \\ \hat{z} \end{pmatrix},
  \begin{pmatrix} \hat{x} \\ -\hat{y} \\ 0 \end{pmatrix},
   \begin{pmatrix} 0 \\ \hat{x}\hat{y} \\ -\hat{x}\hat{z} \end{pmatrix},
    \begin{pmatrix} -\hat{x}\hat{y} \\ 0 \\ \hat{y} \hat{z} \end{pmatrix},
     \begin{pmatrix} \hat{x}\hat{z} \\ -\hat{y}\hat{z} \\ 0  \end{pmatrix},\begin{pmatrix} 0 \\ \hat{y}^2 \\ -2\hat{y}\hat{z}\end{pmatrix},\begin{pmatrix} -\hat{x}^2 \\ 0 \\  2\hat{x}\hat{z}\end{pmatrix} ,\begin{pmatrix} \hat{x}^2 \\ -2\hat{x}\hat{y} \\ 0   \end{pmatrix}
    \end{array}\right\}\nonumber\\
& \quad +\text{span} \left\{\begin{array}{@{} c @{}}        \begin{pmatrix} 0 \\ 2 \hat{y}\hat{z} \\ -\hat{z}^2 \end{pmatrix} , 
 \begin{pmatrix} -2\hat{x}\hat{z} \\ 0 \\ \hat{z}^2 \end{pmatrix},
  \begin{pmatrix} 2\hat{x}\hat{y} \\ -\hat{y}^2 \\ 0 \end{pmatrix},
   \begin{pmatrix} -4\hat{x}\hat{y}\hat{z} \\ \hat{y}^2\hat{z} \\ \hat{y}\hat{z}^2 \end{pmatrix},
    \begin{pmatrix} \hat{x}^2\hat{z} \\ -4\hat{x}\hat{y}\hat{z} \\ \hat{x} \hat{z}^2 \end{pmatrix},
     \begin{pmatrix} \hat{x}^2\hat{y} \\ \hat{x}\hat{y}^2 \\ -4\hat{x}\hat{y}\hat{z}  \end{pmatrix}
    \end{array}\right\}.   
    \label{curl theta ref vector}
\end{align}
It follows from \eqref{curl theta ref vector} and \eqref{enhanced-RT} that
\begin{equation}\label{curl-prop}
\text{curl}\, \Theta(\hat{E}) \subset \hat{\ERT}_0(\hat{E}).
\end{equation}
Moreover, $\curl \Theta(\hat{E})$ includes all basis functions of $\hat{\ERT}_0(\hat{E})$ except for $(x,0,0)^T$, $(0,y,0)^T$, and $(0,0,z)^T$, which implies that
$$
\curl \Theta(\hat{E}) = \big\{p \in \hat{\ERT}_0(\hat{E}): \dvr p = 0\big\},
$$
i.e., $\Theta(\hat{E})$ and $\hat{\ERT}_0(\hat{E})$ form an exact sequence.

We next define suitable degrees of freedom for $\Theta(\hat{E})$ that provide unisolvency and $H(\curl)$-conformity.

\begin{lemma}\label{lem:dof}
For $\hat\theta \in \Theta(\hat{E})$, as defined in \eqref{theta ref vector}, the following degrees of freedom determine $\hat\theta$ uniquely:
\begin{itemize}
\item The $x$-component of $\hat\theta$ at the vertices (8 d.o.f.) and at the midpoints of the edges on the faces $x=0$ and $x=1$ (8 d.o.f.),
\item The $y$-component of $\hat\theta$ at the vertices (8 d.o.f.) and at the midpoints of the edges on the faces $y=0$ and $y=1$ (8 d.o.f.),
\item The $z$-component of $\hat\theta$ at the vertices (8 d.o.f.) and at the midpoints of the edges on the faces $z=0$ and $z=1$ (8 d.o.f.).
\end{itemize}
Furthermore, for each face $\hat{f}$ on $\partial\hat{E}$, if the d.o.f. of $\hat\theta\in \Theta(\hat{E})$ on $\hat{f}$ are zero, then $\hat\theta \times \hat{n}=0$ on $\hat{f}$.  \label{h_curl_conf}
\end{lemma}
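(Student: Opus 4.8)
The plan is to prove the two assertions—unisolvency and the tangential-trace (hence $H(\curl)$-conformity) property—together, using the conformity result as the engine for unisolvency. First I note that $\dim\Theta(\hat E)=48$ ($42$ vectors from the first span in \eqref{theta ref vector} and $6$ from the second), which matches the number of prescribed degrees of freedom, so for unisolvency it suffices to show that vanishing of all d.o.f.\ forces $\hat\theta=0$.

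I would begin with the trace/conformity claim on a single face, say $\hat f=\{\hat z=0\}$ with $\hat n=(0,0,1)^T$, so that $\hat\theta\times\hat n=(\theta_2,-\theta_1,0)^T$ and only $\theta_1|_{\hat f}$ and $\theta_2|_{\hat f}$ matter. Restricting the component spaces in \eqref{theta ref vector} to $\hat z=0$ gives $\theta_1|_{\hat f}\in\mathrm{span}\{1,\hat x,\hat y,\hat x\hat y,\hat y^2,\hat x\hat y^2\}$, a $\mathcal Q_1\!\otimes\!\mathcal Q_2$ space (linear in $\hat x$, quadratic in $\hat y$), and $\theta_2|_{\hat f}\in\mathrm{span}\{1,\hat x,\hat y,\hat x\hat y,\hat x^2,\hat x^2\hat y\}$, a $\mathcal Q_2\!\otimes\!\mathcal Q_1$ space. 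The $x$-component d.o.f.\ located on $\hat f$ are the four vertex values together with the midpoints of the two edges $\{x=0\}$ and $\{x=1\}$ of $\hat f$; these six nodes form the tensor grid $\{0,1\}\times\{0,\tfrac12,1\}$, which is Lagrange-unisolvent for $\mathcal Q_1\!\otimes\!\mathcal Q_2$, hence determine $\theta_1|_{\hat f}$ uniquely. Symmetrically, the $y$-component d.o.f.\ on $\hat f$ form the grid $\{0,\tfrac12,1\}\times\{0,1\}$ and determine $\theta_2|_{\hat f}$. Thus vanishing of the d.o.f.\ on $\hat f$ yields $\theta_1|_{\hat f}=\theta_2|_{\hat f}=0$, i.e.\ $\hat\theta\times\hat n=0$. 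The space $\Theta(\hat E)$ is invariant under cyclic permutation of the coordinates with the corresponding permutation of components, so the identical argument applies to the remaining five faces.

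For unisolvency, assume all $48$ d.o.f.\ vanish. By the conformity step applied to all six faces, $\hat\theta\times\hat n=0$ on $\partial\hat E$. I then pass to $p:=\curl\hat\theta$, which lies in $\ERT_0(\hat E)$ by \eqref{curl-prop}. On each face the normal component $p\cdot\hat n$ is the surface curl of the (vanishing) tangential trace—concretely, on $\{\hat z=0\}$ one has $p\cdot\hat n=(\partial_{\hat x}\theta_2-\partial_{\hat y}\theta_1)|_{\hat z=0}$, which is computed entirely from $\theta_1|_{\hat f},\theta_2|_{\hat f}$—so $p\cdot\hat n=0$ on all six faces. In particular the normal components of $p$ at the four vertices of each face all vanish; since these form a valid set of degrees of freedom for $\ERT_0(\hat E)$, unisolvency of $\ERT_0$ gives $p=\curl\hat\theta=0$.

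It remains to deduce $\hat\theta=0$ from $\curl\hat\theta=0$ and $\hat\theta\times\hat n=0$ on $\partial\hat E$. Since $\hat E$ is simply connected, $\hat\theta=\nabla\phi$ for a polynomial $\phi$, and every component of $\hat\theta$ has total degree at most $4$ (the maximal degree appearing in \eqref{theta ref vector}), so $\deg\phi\le 5$. The condition $\hat\theta\times\hat n=0$ says the tangential gradient of $\phi$ vanishes on each face, so $\phi$ is constant on each face; as adjacent faces share an edge and $\partial\hat E$ is connected, $\phi\equiv c$ on $\partial\hat E$. Then $\phi-c$ vanishes on all six planes $\hat x,\hat y,\hat z\in\{0,1\}$ and is therefore divisible by the degree-$6$ bubble $\hat x(1-\hat x)\hat y(1-\hat y)\hat z(1-\hat z)$; since $\deg(\phi-c)\le 5$, this forces $\phi\equiv c$ and hence $\hat\theta=\nabla\phi=0$. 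I expect the main obstacle to be these final two stages: correctly identifying the restricted trace spaces as tensor-product Lagrange spaces matched to the prescribed nodes, and organizing the curl/potential argument so that the vanishing tangential data propagates first to $\curl\hat\theta=0$ (via $\ERT_0$ unisolvency) and then, through the degree count against the bubble, to $\hat\theta=0$.
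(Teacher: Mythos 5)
Your proof is correct, and while the conformity half is essentially the paper's argument (you organize it as tensor-product Lagrange unisolvency on the grids $\{0,1\}\times\{0,\tfrac12,1\}$ per face, whereas the paper restricts edge-by-edge to univariate quadratics vanishing at three points — the same computation), your unisolvency half takes a genuinely different and more structural route. The paper shows directly that each component must be the face bubble times a constant, pins the constant down by tracking the monomial $\hat y^2\hat z^2$, and then derives a contradiction by hand: the offending terms $\hat x\hat y\hat z^2$ and $\hat x\hat y^2\hat z$ cannot be cancelled by any other basis function of \eqref{theta ref vector}. You instead exploit the exact-sequence structure: vanishing tangential trace on all of $\partial\hat E$ forces $\curl\hat\theta\in\ERT_0(\hat E)$ to have zero normal trace, hence $\curl\hat\theta=0$ by the vertex-based unisolvency of $\ERT_0$ (which the paper already cites); then $\hat\theta=\nabla\phi$ with $\phi$ constant on the connected boundary, and the degree count ($\deg\phi\le 5$ against the degree-$6$ bubble) kills $\phi$. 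Your version buys a cleaner, basis-free argument that avoids the paper's monomial bookkeeping and makes the role of the exact sequence with $\ERT_0$ explicit; the paper's version is self-contained at the level of the explicit spanning set and does not need the $\ERT_0$ unisolvency result or the potential/degree argument. Both are complete; your dimension count ($42+6=48$ matching the $48$ d.o.f.) is the one ingredient the paper leaves implicit and is worth stating either way.
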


\begin{figure}	

	\setlength{\unitlength}{1.0mm}
\scalebox{.7}{
	\begin{picture}(70,45)(-40,-5)
	\thicklines
	\Line(-15,5)(15,5)
	\Line(-15,5)(-15,35)    
	\Line(-15,35)(15,35)
	\Line(15,35)(15,5)
	\Line(-15,35)(0,50)
	\Line(15,35)(30,50)  
	\Line(0,50)(30,50)
	\Line(30,20)(30,50) 
	\Line(15,5)(30,20)   
	\DashLine(-15,5)(0,20)    
	\DashLine(30,20)(0,20)
	\DashLine(0,50)(0,20)     
 
	\put(-19,2.5){$\hat\r_{1}$}
	\put(16,2.5){$\hat\r_{2}$}
	\put(16.5,32.5){$\hat\r_{6}$}
	\put(-20,34.5){$\hat\r_{5}$}
	\put(0,16.5){$\hat\r_{4}$}
	\put(31,17.5){$\hat\r_{3}$}
	\put(31,51.5){$\hat\r_{7}$}
	\put(-5,51){$\hat\r_{8}$}
	
	\color{blue}
	\put(-15,5){\circle*{2}}
    \put(-15,5){\vector(1,0){8}}
    \color{red}
	\put(15,5){\circle*{2}}
	\put(7,5){\vector(1,0){8}}
	\color{blue}
	\put(0,20){\circle{2}}
	\put(0,20){\vector(1,0){8}}
	\color{red}
	\put(30,20){\circle*{2}}
	\put(22,20){\vector(1,0){8}}
	\put(30,50){\circle*{2}}
	\put(22,50){\vector(1,0){8}}
	\put(15,35){\circle*{2}}
	\put(7,35){\vector(1,0){8}}
	\put(15,20){\circle*{2}}
	\put(7,20){\vector(1,0){8}}
	\put(30,35){\circle*{2}}
	\put(22,35){\vector(1,0){8}}
	\color{blue}
	\put(-15,20){\circle*{2}}
	\put(-15,20){\vector(1,0){8}}
	\color{red}
	\put(22.5,12.5){\circle*{2}}
	\put(14.5,12.5){\vector(1,0){8}}
	\put(22.5,42.5){\circle*{2}}
	\put(14.5,42.5){\vector(1,0){8}}
	\color{blue}
	\put(-7.5,42.5){\circle*{2}}
	\put(-7.5,42.5){\vector(1,0){8}}
	\put(0,50){\circle*{2}}
	\put(0,50){\vector(1,0){8}}
	\put(-15,35){\circle*{2}}
	\put(-15,35){\vector(1,0){8}}
	\put(-7.5,12.5){\circle{2}}
	\put(-7.5,12.5){\vector(1,0){8}}
	\put(0,34){\circle{2}}
	\put(0,34){\vector(1,0){8}}
	\color{black}
	
	\Line(65,5)(95,5)
	\Line(65,5)(65,35)    
	\Line(65,35)(95,35)
	\Line(95,35)(95,5)
	\Line(65,35)(80,50)
	\Line(95,35)(110,50)  
	\Line(80,50)(110,50)
	\Line(110,20)(110,50) 
	\Line(95,5)(110,20)   
	\DashLine(65,5)(80,20)    
	\DashLine(110,20)(80,20)
	\DashLine(80,50)(80,20)     
	
	\put(61,2.5){$\hat\r_{1}$}
	\put(96,2.5){$\hat\r_{2}$}
	\put(96.5,32.5){$\hat\r_{6}$}
	\put(60,34.5){$\hat\r_{5}$}
	\put(80,16.5){$\hat\r_{4}$}
	\put(111,17.5){$\hat\r_{3}$}
	\put(112,49){$\hat\r_{7}$}
	\put(75.5,51){$\hat\r_{8}$}
	\color{blue}
	
	\put(65,5){\circle*{2}}
	\put(65,5){\vector(1,1){5}}
	\put(80,5){\circle*{2}}
	\put(80,5){\vector(1,1){5}}
	\put(80,35){\circle*{2}}
	\put(80,35){\vector(1,1){5}}
	\put(95,5){\circle*{2}}
	\put(95,5){\vector(1,1){5}}
	\color{red}
	\put(80,20){\circle{2}}
	\put(75,15){\vector(1,1){5}}
	\put(110,20){\circle*{2}}
	\put(105,15){\vector(1,1){5}}
	\put(110,50){\circle*{2}}
	\put(105,45){\vector(1,1){5}}
	\put(95,50){\circle*{2}}
	\put(90,45){\vector(1,1){5}}
	\color{blue}
	\put(95,35){\circle*{2}}
	\put(95,35){\vector(1,1){5}}
	\put(95,20){\circle*{2}}
	\put(95,20){\vector(1,1){5}}
	\color{red}
	\put(110,35){\circle*{2}}
	\put(105,30){\vector(1,1){5}}
	\color{blue}
	\put(65,20){\circle*{2}}
	\put(65,20){\vector(1,1){5}}
	\color{red}
	\put(80,50){\circle*{2}}
	\put(75,45){\vector(1,1){5}}
	\color{blue}
	\put(65,35){\circle*{2}}
	\put(65,35){\vector(1,1){5}}
	\color{red}
	\put(80,33){\circle{2}}
	\put(75,28){\vector(1,1){5}}
	\put(93,20){\circle{2}}
	\put(88,15){\vector(1,1){5}}
	\color{black}
	
	\Line(145,5)(175,5)
	\Line(145,5)(145,35)    
	\Line(145,35)(175,35)
	\Line(175,35)(175,5)
	\Line(145,35)(160,50)
	\Line(175,35)(190,50)  
	\Line(160,50)(190,50)
	\Line(190,20)(190,50) 
	\Line(175,5)(190,20)   
	\DashLine(145,5)(160,20)    
	\DashLine(190,20)(160,20)
	\DashLine(160,50)(160,20)   
	
	\put(141,2.5){$\hat\r_{1}$}
	\put(176,2.5){$\hat\r_{2}$}
	\put(176.5,32.5){$\hat\r_{6}$}
	\put(140,34.5){$\hat\r_{5}$}
	\put(160,16.5){$\hat\r_{4}$}
	\put(191,17.5){$\hat\r_{3}$}
	\put(191,51){$\hat\r_{7}$}
	\put(155,51){$\hat\r_{8}$}
	\color{blue}
	
	\put(145,5){\circle*{2}}
    \put(145,5){\vector(0,1){8}}
	\put(175,5){\circle*{2}}
	\put(175,5){\vector(0,1){8}}
	\put(160,20){\circle{2}}
	\put(160,20){\vector(0,1){8}}
	\put(190,20){\circle*{2}}
	\put(190,20){\vector(0,1){8}}
	\color{red}
	\put(190,50){\circle*{2}}
	\put(190,42){\vector(0,1){8}}
	\put(175,35){\circle*{2}}
	\put(175,27){\vector(0,1){8}}
	\color{blue}
	\put(182.5,12.5){\circle*{2}}
	\put(182.5,12.5){\vector(0,1){8}}
	\color{red}
	\put(182.5,42.5){\circle*{2}}
	\put(182.5,34.5){\vector(0,1){8}}
	\put(152.5,42.5){\circle*{2}}
	\put(152.5,34.5){\vector(0,1){8}}
	\put(160,50){\circle*{2}}
	\put(160,42){\vector(0,1){8}}
	\put(145,35){\circle*{2}}
	\put(145,27){\vector(0,1){8}}
	\color{blue}
	\put(152.5,12.5){\circle{2}}
	\put(152.5,12.5){\vector(0,1){8}}
	\color{red}
	\put(160,35){\circle*{2}}
	\put(160,27){\vector(0,1){8}}
	\put(175,50){\circle*{2}}
	\put(175,42){\vector(0,1){8}}
	\color{blue}
	\put(174,20){\circle{2}}
	\put(174,20){\vector(0,1){8}}
	\put(160,5){\circle*{2}}
	\put(160,5){\vector(0,1){8}}
	\color{black}

	\put(-5,-5){$\Theta_x(\hat{E})$}
	\put(75,-5){$\Theta_y(\hat{E})$}
	\put(155,-5){$\Theta_z(\hat{E})$}
	\end{picture}
}
	\caption{Degrees of freedom for $\Theta(\hat{E})$.}
	\label{Q_1 Q_2 Q_3 figure}
\end{figure}
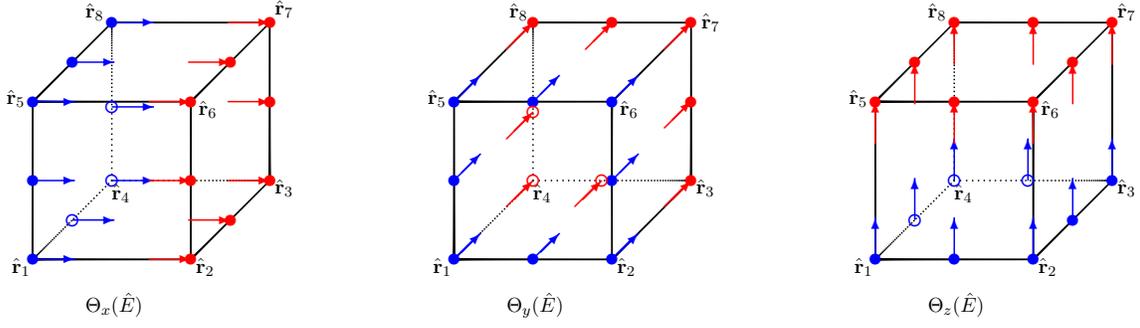

\begin{proof}
The degrees of freedom for the space $\Theta(\hat{E})$ are shown in Figure \ref{Q_1 Q_2 Q_3 figure}. We first prove the second part of the statement of the lemma concerning $H(\curl)$-conformity. Consider a face $\hat f$ with $\hat x = 0$ or $\hat x = 1$ and let the degrees of freedom of $\hat\theta = (\hat\theta_1,\hat\theta_2,\hat\theta_3)^T \in \Theta(\hat{E})$ on this face be zero. The definition \eqref{theta ref vector} implies that the $y$-component has the form
$$
\hat\theta_2 = \alpha_1 + \alpha_2 \hat y + \alpha_3 \hat z + \alpha_4 \hat y \hat z + \alpha_5 \hat z^2 + \alpha_6 \hat y \hat z^2.
$$
On the edge $y=0$, $\hat\theta_2 = \alpha_1 + \alpha_3 \hat z + \alpha_5 \hat z^2$, and $\hat\theta_2 = 0$ at the endpoints and the midpoint of the edge, implying that $\alpha_1 = \alpha_3 = \alpha_5 = 0$. Next, on the edge $y=1$, $\hat\theta_2 = \alpha_2 + \alpha_4 \hat z + \alpha_6 \hat z^2$ and $\hat\theta_2 = 0$ at the endpoints and the midpoint of the edge, implying that $\alpha_2 = \alpha_4 = \alpha_6 = 0$. Therefore $\hat\theta_2 = 0$ on $\hat f$. A similar argument implies that the $z$-component $\hat\theta_3 = 0$ on $\hat f$. Therefore $\hat\theta \times \hat{n}=0$ on $\hat f$. The argument for the other faces is similar. This completes the proof of $H(\curl)$-conformity.

We proceed with the proof of unisolvency. Let all degrees of freedom of $\hat\theta\in \Theta(\hat{E})$ be zero. The above $H(\curl)$-conformity argument implies that $\hat\theta_1 = 0$ for $y = 0,1$ and $z = 0,1$, $\hat\theta_2 = 0$ for $x = 0,1$ and $z = 0,1$, and $\hat\theta_3 = 0$ for $x = 0,1$ and $y = 0,1$. Therefore, 
\begin{align}
&\hat\theta_1 = \yh(\yh-1)\zh(\zh-1)\hat p_1 = (\yh\zh - \yh^2\zh - \yh\zh^2 + \yh^2\zh^2)\hat p_1, \label{uni-1} \\
&\hat\theta_2 = \xh(\xh-1)\zh(\zh-1)\hat p_2 = (\xh\zh - \xh^2\zh - \xh\zh^2 + \xh^2\zh^2)\hat p_2, \label{uni-2} \\
&\hat\theta_3 = \xh(\xh-1)\yh(\yh-1)\hat p_1 = (\xh\yh - \xh^2\yh - \xh\yh^2 + \xh^2\yh^2)\hat p_3, \label{uni-3}
\end{align}
for some polynomials $\hat p_1$, $\hat p_2$, and $\hat p_3$. Since $\hat\theta_1$ includes $\yh^2\zh^2$, but no other terms involving $\yh^2\zh^2$, cf. \eqref{theta ref vector}, we conclude that $\hat p_1$ is a constant. Similarly, $\hat p_2$ and $\hat p_3$ are constants. Let $\hat p_1 = \alpha$ and assume that $\alpha \ne 0$. Then $\hat\theta$ includes $\alpha(\hat{y}^2\hat{z}^2, 2\hat{x}\hat{y}\hat{z}^2, 2\hat{x}\hat{y}^2 \hat{z})^T$. Since $\hat{x}\hat{y}\hat{z}^2$ is not included in the expression of $\hat\theta_2$ in \eqref{uni-2}, it must be eliminated by a combination with other basis functions. The only other basis function in \eqref{theta ref vector} that has $\hat{x}\hat{y}\hat{z}^2$ in $\hat\theta_2$ is $(0,\hat{x}\hat{y}\hat{z}^2,-\hat{x}\hat{y}^2 \hat{z})^T$, which can be used to obtain $\alpha(\hat{y}^2\hat{z}^2, 0 , 4\hat{x}\hat{y}^2 \hat{z})^T$. However, the term $\hat{x}\hat{y}^2 \hat{z}$ is not included in the expression of $\hat\theta_3$ in \eqref{uni-3} and there are no other basis functions in \eqref{theta ref vector} that have this term in $\hat\theta_3$ in order to eliminate it. This leads to a contradiction, implying that $\alpha = 0$. Similar arguments imply that $\hat p_2 = 0$ and $\hat p_3 = 0$. This results in $\hat\theta = 0$.
\end{proof}
We define a space $\Theta_h$ as
\begin{align}
 \Theta_h &= \{\theta \in H(\curl;\Om): \theta|_E \overset{\mathcal{C}}{\leftrightarrow} \hat{\theta},\: \hat{\theta} \in \hat{\Theta}(E) \quad \forall E\in\mathcal{T}_h\}, \label{theta_h definition}
\end{align}
where we use the covariant transformation defined on any element $E \in \mathcal{T}_h$ as
\begin{align}\label{maps-cov}
 \theta \overset{\mathcal{C}}{\leftrightarrow} \hat{\theta} : 
\theta = DF_E^{-T} \hat{\theta} \circ F_E^{-1}.
\end{align}
The covariant transformation preserves the tangential trace of vectors on the element boundary, up to a scaling factor. In particular, if $\hat t$ is a unit tangent vector on a face $\hat f$ of $\hat E$, then $t = DF_E \,\hat t/|DF_E \,\hat t|$ is a unit tangent vector on the corresponding face $f$ of $E$, where $|\cdot|$ denotes the Euclidean norm in $\R^3$. Then
$$
\theta\cdot t = DF_E^{-T} \hat\theta\cdot DF_E \,\hat t/|DF_E\,\hat t| = \hat\theta\cdot\hat t/|DF_E\,\hat t|.
$$
Due to Lemma \ref{h_curl_conf}, $H(\curl)$-conformity can be obtained by matching the degrees of freedom on each face from the two neighboring elements.

A key property is that, for $\theta \in \Theta_h$, $\theta \overset{\mathcal{C}}{\leftrightarrow} \hat{\theta}$, it holds that \cite[(2.1.92)]{Boffi-Brezzi-Fortin}
\begin{align}
 \text{curl} \theta \overset{\mathcal{P}}{\leftrightarrow} \text{curl} \hat{\theta}. \label{covariant property}
\end{align}

\begin{remark}
Property \eqref{covariant property} does not hold in two dimensions when the covariant transformation \eqref{maps-cov} is used. It holds in two dimensions if the standard change of variables is used for $\Theta_h$. This is the approach used \cite{arnold2015mixed,msmfe-quads}, where $\Theta_h$ is $H^1$-conforming.
\end{remark}

We now proceed with showing that the spaces $\X_h \times V_h \times \W_h^1$ in the MSMFE-1 method \eqref{h-weak-P1-1}--\eqref{h-weak-P1-3} satisfy Theorem~\ref{thm:suff-cond}. According to the definition of the spaces in \eqref{ref-spaces} and \eqref{spaces}, we take
\begin{equation}\label{Sh-Uh}
  \begin{split}
& S_h =\{z\in H(\dvrg ;\Omega):z|_E \overset{\Pc}{\leftrightarrow} \hat{z} \in \ERT_0 (\hat{E}), \ z\cdot n=0\  \text{on}\ \Gamma_N \}, \\
    & U_h =\{r \in L^2(\Omega): r|_E \leftrightarrow \hat{r} \in Q_0(\hat{E})\}, \\
    & \W_h = \W_h^1.
\end{split}
\end{equation}
The boundary condition in $S_h$ is needed to guarantee the essential boundary condition in $\X_h$ on $\Gamma_N$. We define the space $\Q_h$ as
\begin{align}\label{Qh-defn}
  \Q_h=\{q\in H(\curl,\Omega,\mathbb{M}): q = (q_1,q_2,q_3)^T, \,
q_i \in \Theta_h,\ i=1,2,3,  \ q=0 \ \text{on}\ \ \Gamma_N\}.
\end{align}

\begin{lemma}\label{lem:two-conditions}
The spaces defined in \eqref{Sh-Uh} and \eqref{Qh-defn} satisfy conditions \eqref{darcy-pair} and \eqref{curl-condition} of Theorem~\ref{thm:suff-cond}.
\end{lemma}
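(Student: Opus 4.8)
The two conditions are of different character and I would treat them separately. Condition \eqref{darcy-pair} asserts that $S_h \times U_h$ is a stable Darcy pair. But $S_h$ and $U_h$ are precisely the (vector) lowest-order enhanced Raviart--Thomas space and the piecewise-constant space on $\Tc_h$, whose Darcy stability, together with the surjectivity $\dvr S_h = U_h$, is established in \cite{high-order-mfmfe}. Thus \eqref{darcy-pair} follows directly and no new work is needed here.

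The substance of the lemma is the inclusion \eqref{curl-condition}, which I would obtain by assembling the structural facts already proved for $\Theta(\Eh)$ and $\Theta_h$. Let $q = (q_1,q_2,q_3)^T \in \Q_h$, so that each row $q_i \in \Theta_h$ and $\curl$ acts row-wise with $(\curl q)_i = \curl q_i$; it therefore suffices to show $\curl q_i \in S_h$. Since $q_i \in \Theta_h \subset H(\curl;\Om)$, we have $\curl q_i \in L^2(\Om,\R^3)$ and $\dvr \curl q_i = 0$, so $\curl q_i \in H(\dvr;\Om)$, which in particular supplies continuity of the normal component across interior faces. Element-wise, the covariant-to-Piola relation \eqref{covariant property} gives $\curl q_i|_E \overset{\Pc}{\leftrightarrow} \curl \hat q_i$ with $\hat q_i \in \Theta(\Eh)$, and the inclusion \eqref{curl-prop} yields $\curl \hat q_i \in \ERT_0(\Eh)$. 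Hence on every $E \in \Tc_h$, $\curl q_i|_E$ is the Piola transform of an element of $\ERT_0(\Eh)$, which is exactly the local membership required in $S_h$.

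It remains to verify the essential boundary condition on $\Gamma_N$, which I expect to be the only genuinely delicate point. The condition $q = 0$ on $\Gamma_N$ in the definition of $\Q_h$ means that the face degrees of freedom vanish there, so by Lemma~\ref{lem:dof} (and the tangential-trace preservation of the covariant map) the tangential trace $q_i \times n$ vanishes on $\Gamma_N$. I would then invoke the surface identity that the normal component of a curl on a face depends only on the tangential trace of the field, namely that $(\curl q_i)\cdot n$ equals the surface divergence of $q_i \times n$; this forces $(\curl q_i)\cdot n = 0$ on $\Gamma_N$, which is precisely the normal condition in $S_h$. Combining the global $H(\dvr)$-conformity, the element-wise $\ERT_0$ structure, and the vanishing normal trace on $\Gamma_N$ gives $\curl q_i \in S_h$ and hence $\curl q \in (S_h)^3$. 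The element-wise inclusion being immediate from \eqref{covariant property} and \eqref{curl-prop}, the crux of the argument is this translation of the tangential (covariant) boundary condition on $\Q_h$ into the normal (Piola) boundary condition on $S_h$.
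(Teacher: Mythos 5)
Your proposal is correct and follows essentially the same route as the paper: condition \eqref{darcy-pair} is quoted from \cite{high-order-mfmfe}, and \eqref{curl-condition} is obtained from the element-wise inclusion \eqref{curl-prop} via \eqref{covariant property} together with the fact that the vanishing tangential trace of $q$ on $\Gamma_N$ forces $(\curl q_i)\cdot n = 0$ there. The only difference is that where you derive this last trace fact from the surface-divergence identity, the paper simply cites \cite[Lemma~4.4]{msmfe-simpl} for it.
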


\begin{proof}
  Condition \eqref{darcy-pair} holds, since $\ERT_0 \times Q_0$ is a stable Darcy pair \cite[Lemma 2.1]{high-order-mfmfe}. Next, since for \(q \in \Q_h\) we have \(q = 0\) on \(\Gamma_N\), cf. \eqref{Qh-defn}, it follows that \((\text{curl}\, q_i) \cdot n = 0\) on \(\Gamma_N\) for each row $q_i$, see \cite[Lemma 4.4]{msmfe-simpl}. Then, using \eqref{curl-prop}, \eqref{Sh-Uh}, and \eqref{Qh-defn}, we conclude that $\curl \Q_h \subset (S_h)^3$, implying \eqref{curl-condition}.
\end{proof}

It remains to show that \eqref{stokes-pair} holds. We prove it using several auxiliary lemmas. We make the following assumption:

\begin{enumerate}[label={\bf(A\arabic*)}]
  \item \label{A1} Every element $E \in \mathcal{T}_h$ has at most one face on $\Gamma_N$.
\end{enumerate}

\begin{lemma} \label{no quad glbl lemma 1}
 If \ref{A1} holds, there exists a constant $C$ independent of $h$ such that
for each $w \in \W_h$ there exists $q \in \Q_h$ satisfying
\begin{align}
(\curl q, w)_Q \geq C \sum_{E \in \mathcal{T}_h} \sum_{e \in \partial E} \sum_{l=1}^3 |E| \left( w_l(\r_{j}) - w_l(\r_{k}) \right)^2, \label{no quad glbl-1}
\end{align}
for some constant $C>0$ independent of $h$,
where $\r_{j}$ and $\r_{k}$ are the endpoints of edge $e$.
\end{lemma}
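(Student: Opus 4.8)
The plan is to exhibit $q\in\Q_h$ directly, by prescribing its degrees of freedom as conformity-preserving functions of the nodal values of $w$, and then to show that each element contributes at least the corresponding edge-difference term. Since $(\curl q,w)_Q=\sum_{E\in\Tc_h}(\curl q,w)_{Q,E}$ and the right-hand side of \eqref{no quad glbl-1} is also a sum over elements, it suffices to construct one global $q$ that works element by element. I will use throughout that on a cuboid $F_E$ is affine with $\DF_E=\operatorname{diag}(h_1^E,h_2^E,h_3^E)$ constant and $J_E=|E|$, so that the factors in the covariant map \eqref{maps-cov}, the Piola identity \eqref{covariant property}, and the quadrature rule \eqref{def} are piecewise constant.

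First I would rewrite the local form. By \eqref{covariant property}, $\curl q\overset{\Pc}{\leftrightarrow}\curl\hat q$, so \eqref{def} gives
\[
(\curl q,w)_{Q,E}=\tfrac18\sum_{i=1}^{8}(\curl\hat q)(\rh_i)\,\DF_E^T:\hat w(\rh_i),\qquad \hat w(\rh_i)=w(\r_i),
\]
and, because $\hat w$ is skew, only the skew part of $(\curl\hat q)(\rh_i)\DF_E^T$ enters. This reduces everything to the vertex values of $\curl\hat q$ and of $w$, and is the identity on which the rest of the argument rests.

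Next I would define $q$ through the degrees of freedom of Lemma \ref{h_curl_conf}, namely the vertex and edge-midpoint component values, setting them to explicit linear functions of the nodal values $w_l(\r_i)$ scaled by the local edge lengths $h_e=|\r_j-\r_k|$, chosen so that $\curl\hat q$ encodes the weighted edge differences $h_e\,(w_l(\r_j)-w_l(\r_k))$. Because the edge lengths and the values of the continuous $w\in\W_h^1$ are intrinsic to each edge and vertex, these prescriptions agree across all elements sharing a given face; by the $H(\curl)$-conformity criterion of Lemma \ref{h_curl_conf}, the element-wise definitions glue into a single $q\in\Q_h$. Assumption \ref{A1} is used here: the constraint $q=0$ on $\Gamma_N$ from \eqref{Qh-defn} kills the degrees of freedom on at most one face of a boundary element, which still leaves enough freedom to realize all its edge differences. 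The edge-length weight is what yields the factor $|E|\sim(h_E)^3$: the covariant scaling, the factor $\DF_E^T$ in the identity above, and the vertex differences of $w$ together supply the three powers of $h_E$ and the square $(w_l(\r_j)-w_l(\r_k))^2$.

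The core of the argument is a reference-element estimate. With the above choice, $(\curl q,w)_{Q,E}$ becomes a quadratic form $d_E^{\,T}M_E\,d_E$ in the vector $d_E=\big(w_l(\r_j)-w_l(\r_k)\big)_{e\in\partial E,\,l}$, where $M_E$ is assembled from the explicit vertex values of the basis of $\curl\Theta(\hat E)$ in \eqref{curl theta ref vector}, multiplied by $\DF_E^T$ and contracted against skew matrices. Using \eqref{scaling-of-mapping} and shape-regularity, the proof reduces to showing that the symmetric part of the reference counterpart of $M_E$ is positive definite, i.e.\ that $d_E^{\,T}M_E\,d_E\ge C|E|\,|d_E|^2$; summing over $E$ then gives \eqref{no quad glbl-1}. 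I expect the main obstacle to be this finite-dimensional positive-definiteness. Two features make it delicate. First, the skew contraction with $\DF_E^T$ couples the three components, so the kernel must be computed on the full $\M$-valued space, using the exact-sequence description $\curl\Theta(\hat E)=\{p\in\hat{\ERT}_0(\hat E):\dvr p=0\}$ together with $\hat x^2=\hat x,\ \hat y^2=\hat y,\ \hat z^2=\hat z$ at the vertices, to show that $(\curl\hat q,\hat w)_{\hat Q,\hat E}=0$ for all admissible $\hat q$ forces $\hat w$ constant. Second, since $q$ depends only on the differences $d_E$, one must ensure the form is insensitive to the constant skew (rigid) modes of $w$, equivalently that $\tfrac18\sum_i(\curl\hat q)(\rh_i)\DF_E^T$ has vanishing skew part; this is arranged by drawing the curl basis functions used in the construction from the subspace whose vertex averages annihilate the constant modes.
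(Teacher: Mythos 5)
Your overall strategy matches the paper's: build $q$ from the edge-midpoint degrees of freedom of $\Theta_h$, weight each by the corresponding edge difference of $w$ (times an area factor), use the continuity of $w$ and Lemma~\ref{h_curl_conf} to glue the element-wise definitions into a conforming global $q$, and invoke \ref{A1} for elements touching $\Gamma_N$. However, the heart of the proof is missing. You reduce the element contribution to a quadratic form $d_E^{\,T}M_E\,d_E$ in the edge differences and then state that the ``main obstacle'' is proving positive definiteness of (the symmetric part of) $M_E$ --- but you never do it, and you frame it in a way that is both harder than necessary and not yet well posed. Indeed, $(\curl q,w)_{Q,E}$ is a priori bilinear in the coefficients of $q$ and in the \emph{vertex values} of $w$, not only in the differences $d_E$; your own worry about the constant skew modes is precisely the observation that it is not automatic that the pairing depends on $w$ only through $d_E$. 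Asserting that this ``is arranged by drawing the curl basis functions from the subspace whose vertex averages annihilate the constant modes'' is a hand-wave that replaces the computation you actually need.

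The paper resolves both issues at once by an explicit calculation (equations \eqref{curl bilin ex}--\eqref{curl blnr 0}): for each of the 72 edge-midpoint basis functions, the vertex quadrature rule combined with \eqref{covariant property} yields \emph{exactly} $\tfrac{h^{\alpha}_E}{2}\left(w_l(\r_j)-w_l(\r_k)\right)$ for a single edge $e=(jk)$ and a single component $l$ (or exactly zero). This shows simultaneously that the pairing depends on $w$ only through the edge differences and that the resulting quadratic form is \emph{diagonal}: with the choice \eqref{no quad glbl varphi}, the element contribution is a sum of terms $|f|\,\tfrac{h^{\alpha}_E}{2}\left(w_l(\r_j)-w_l(\r_k)\right)^2\sim |E|\left(w_l(\r_j)-w_l(\r_k)\right)^2$, so no positive-definiteness or kernel argument is needed at all. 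This diagonal structure is exactly what the vertex quadrature rule buys (cf.\ the remark following the lemma: without quadrature the argument fails), and it is the step your proposal leaves unproven. A secondary inaccuracy: near $\Gamma_N$ the basis functions on a boundary face are removed by the constraint $q=0$ on $\Gamma_N$, so the edge differences on that face are not ``realized'' by remaining degrees of freedom; rather, by \ref{A1} and the triangle inequality they are \emph{controlled} by the differences on the other edges of the element, which is how those terms still appear on the right-hand side of \eqref{no quad glbl-1}.
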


\begin{proof}
  Let $q \in \Q_h$, $q|_E  \overset{\mathcal{C}}{\leftrightarrow} \hat{q} = (\hat q_1, \hat q_2, \hat q_3)^T$, $\hat q_i \in \Theta(\hat E)$. As depicted in Figure \ref{Q_1 Q_2 Q_3 figure}, there are 24 basis functions associated with the midpoints of the edges for each $\hat q_1$, $\hat q_2$, and $\hat q_3$, for a total of 72 such basis functions. Denote by $\displaystyle{\hat q_{i,\alpha,(jk)}}$ the $\alpha$-oriented basis function, $\alpha \in \{x,y,z\}$ associated with $\hat q_i$, $i = 1,2,3$, and the edge connecting the vertices $\hat\r_j$ and $\hat\r_k$. For example, $\hat{q}_{1,x,(14)} = (\hat\theta_{x,(14)},0,0)^T$, where
\begin{equation*}
\hat{\theta}_{x,(14)} = \begin{pmatrix} 4\hat{y} - 4\hat{x}\hat{y} - 4\hat{y}\hat{z} - 4\hat{y}^2 + 4\hat{x}\hat{y}\hat{z} + 4\hat{x}\hat{y}^2 + 4\hat{y}^2\hat{z} - 4\hat{x}\hat{y}^2\hat{z} \\ 0 \\ -\hat{x}\hat{y} + \hat{x}\hat{y}^2 + \hat{x}^2\hat{y} - \hat{x}^2\hat{y}^2 \end{pmatrix}.
\end{equation*}
Consider $\displaystyle{q_{1,x,(14)}|_E \overset{\mathcal{C}}{\leftrightarrow} \hat{q}_{1,x,(14)}}$. Let $w\in W_h$ be given such that
\begin{align*}
w= \begin{pmatrix} 0 & -w_3 & w_2 \\ w_3 & 0 & -w_1 \\ -w_2 & w_1 & 0 \end{pmatrix}.
\end{align*} 
Using \eqref{covariant property}, we obtain by direct calculation:
\begin{align}
  \left(\curl \left(q_{1,x,(14)}\right),w\right)_{Q,E}&=\left(\frac{1}{J_E}\curl \left(\hat{q}_{1,x,(14)}\right)DF^{T}_E,\hat w J_E\right)_{\hat{Q},\hat{E}}\nonumber\\
  &=\left(\curl\big(\hat\theta_{x,(14)},0,0\big)^T DF^{T}_E,\hat w\right)
  _{\hat{Q},\hat{E}}=\frac{h^z_E}{2}\left(w_2(\r_4)-w_2(\r_1)\right). \label{curl bilin ex}
\end{align}

By calculations similar to \eqref{curl bilin ex}, we obtain the following results for the 72 basis functions of $q \in \Q_h$ on the element $E$ associated with the edge midpoints:

\begin{align}
\left(\curl \left(q_{i,\alpha,(jk)}\right), w\right)_{Q,E} &= \frac{h^x_E}{2}\left(w_l(\r_j) - w_l(\r_k)\right) \nonumber \\
\text{for } (i,\alpha,j,k,l) &= (2,y,5,1,3), (2,y,6,2,3), (2,y,7,3,3), (2,y,8,4,3), \nonumber \\
&\quad (2,z,1,4,3), (2,z,2,3,3), (2,z,6,7,3), (2,z,5,8,3), \nonumber \\
&\quad (3,y,1,5,2), (3,y,2,6,2), (3,y,3,7,2), (3,y,4,8,2), \nonumber \\
&\quad (3,z,4,1,2), (3,z,3,2,2), (3,z,7,6,2), (3,z,8,5,2), \label{curl blnr x}
\end{align}
\begin{align}
\left(\curl \left(q_{i,\alpha,(jk)}\right), w\right)_{Q,E} &= \frac{h^y_E}{2}\left(w_l(\r_j) - w_l(\r_k)\right) \nonumber \\
\text{for } (i,\alpha,j,k,l) &= (1,x,5,1,3), (1,x,6,2,3), (1,x,7,3,3), (1,x,8,4,3), \nonumber \\
&\quad (1,z,1,2,3), (1,z,4,3,3), (1,z,8,7,3), (1,z,5,6,3), \nonumber \\
&\quad (3,x,1,5,1), (3,x,2,6,1), (3,x,3,7,1), (3,x,4,8,1), \nonumber \\
&\quad (3,z,2,1,1), (3,z,3,4,1), (3,z,7,8,1), (3,z,6,5,1), \label{curl blnr y}
\end{align}
\begin{align}
\left(\curl \left(q_{i,\alpha,(jk)}\right), w\right)_{Q,E} &= \frac{h^z_E}{2}\left(w_l(\r_j) - w_l(\r_k)\right) \nonumber \\
\text{for } (i,\alpha,j,k,l) &= (1,x,4,1,2), (1,x,3,2,2), (1,x,7,6,2), (1,x,8,5,2), \nonumber \\
&\quad (1,y,1,2,2), (1,y,4,3,2), (1,y,8,7,2), (1,y,5,6,2), \nonumber \\
&\quad (2,x,1,4,1), (2,x,2,3,1), (2,x,6,7,1), (2,x,5,8,1), \nonumber \\
&\quad (2,y,2,1,1), (2,y,3,4,1), (2,y,7,8,1), (2,y,6,5,1), \label{curl blnr z}
\end{align}
\begin{align}
\left(\curl \left(q_{i,\alpha,(jk)}\right), w\right)_{Q,E} &= 0 \nonumber \\
\text{for } (i,\alpha,j,k) &= (1,y,1,5), (1,y,2,6), (1,y,3,7), (1,y,4,8), \nonumber \\
&\quad (1,z,1,4), (1,z,2,3), (1,z,6,7), (1,z,5,8), \nonumber \\
&\quad (2,x,1,5), (2,x,2,6), (2,x,3,7), (2,x,4,8), \nonumber \\
&\quad (2,z,1,2), (2,z,4,3), (2,z,8,7), (2,z,5,6), \nonumber \\
&\quad (3,x,1,4), (3,x,2,3), (3,x,6,7), (3,x,5,8), \nonumber \\
&\quad (3,y,1,2), (3,y,4,3), (3,y,8,7), (3,y,5,6). \label{curl blnr 0}
\end{align}

Let 
\begin{align}
q = \sum_{f \not\subset \Gamma_N}\sum_{e \subset \partial f} |f|(w_l(\r_j) - w_l(\r_k)) q_f^e, \label{no quad glbl varphi}
\end{align}
where $q_f^e$ are the basis functions of $\Q_h$ that appear in \eqref{curl blnr x}, \eqref{curl blnr y}, and \eqref{curl blnr z}. Note that, due to the continuity of $w$, the factor $|f|(w_l(\r_j) - w_l(\r_k))$ is the same for the two elements in the support of $q_f^e$, which are the elements that share $f$. These expressions account for 48 basis functions associated with the midpoints of the edges on an element $E$, and their evaluations include all rotational differences $w_l(\r_j) - w_l(\r_k)$ on all edges $e \in \partial E$. Due to assumption \ref{A1}, if $E$ has a face on $\Gamma_N$, the differences $w_l(\r_j) - w_l(\r_k)$ on an edge $e$ on $\Gamma_N$ can be controlled by the differences on the edges of $E$ that are not on $\Gamma_N$. Using \eqref{curl blnr x}--\eqref{curl blnr z}, the function $q$ defined in \eqref{no quad glbl varphi} satisfies
\begin{align}
(\text{curl} \, q, w)_Q = \sum_{E \in \mathcal{T}_h} (\text{curl} \, q, w)_{Q,E} \geq C \sum_{E \in \mathcal{T}_h} \sum_{e \in \partial E} \sum_{l=1}^3 |E| \left( w_l(\r_{j}) - w_l(\r_{k}) \right)^2,
\end{align}
which is the desired result.
\end{proof}

\begin{remark}
We note that the expression in \eqref{curl bilin ex} without quadrature rule is more involved and the argument in Lemma~\ref{no quad glbl lemma 1} does not apply in this case. Proving stability for the method without quadrature rule for the stress-rotation bilinear forms is an open problem.
\end{remark}

\begin{lemma}\label{lem:discrete-grad}
For all $w \in \W_h$ and all $E \in \Tc_h$, it holds that
\begin{equation}\label{discrete-grad}
  |E|\sum_{e \in \partial E} \sum_{l=1}^3 \left(w_l(\r_j) - w_l(\r_k)\right)^2 \sim h_E^2\|\nabla w\|^2_E.
\end{equation}

\end{lemma}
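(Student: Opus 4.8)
The plan is to prove the equivalence first on the reference element $\Eh$, where both sides reduce to seminorms on the finite-dimensional space $\Qc_1(\Eh,\N)$, and then transfer the result to an arbitrary $E\in\Tc_h$ via the change of variables $w = \hat w\circ F_E^{-1}$ together with the scaling bounds \eqref{scaling-of-mapping}. The first observation is that $F_E$ maps the vertices $\hat\r_i$ of $\Eh$ to the vertices $\r_i$ of $E$, so the vertex values are preserved, $w_l(\r_j) = \hat w_l(\hat\r_j)$ for each component $l$ and each vertex. Consequently the edge-difference sum is invariant under the map,
\[
\sum_{e\in\partial E}\sum_{l=1}^3 \big(w_l(\r_j)-w_l(\r_k)\big)^2 = \sum_{\hat e\in\partial \Eh}\sum_{l=1}^3\big(\hat w_l(\hat\r_j)-\hat w_l(\hat\r_k)\big)^2,
\]
where $\hat\r_j,\hat\r_k$ are the endpoints of the reference edge $\hat e$ corresponding to $e$.

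The core step is the reference equivalence
\[
\sum_{\hat e\in\partial \Eh}\sum_{l=1}^3\big(\hat w_l(\hat\r_j)-\hat w_l(\hat\r_k)\big)^2 \sim \|\nabla\hat w\|^2_{\Eh}, \qquad \hat w\in\Qc_1(\Eh,\N).
\]
The square roots of both sides are seminorms on the finite-dimensional space $\Qc_1(\Eh,\N)$, so it suffices to show they have the same kernel, namely the constants; equivalence then follows from the equivalence of all norms on the finite-dimensional quotient $\Qc_1(\Eh,\N)/\N$, with constants depending only on $\Eh$. Clearly $\|\nabla\hat w\|_{\Eh}=0$ iff $\hat w$ is constant. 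For the edge-difference seminorm I would write a scalar component as $\hat w_l = a + b\hat x + c\hat y + d\hat z + e\hat x\hat y + f\hat x\hat z + g\hat y\hat z + h\hat x\hat y\hat z$; its restriction to each edge is affine, and the twelve edge differences are explicit linear combinations of $b,c,d,e,f,g,h$. Requiring all of them to vanish forces $b=c=d=e=f=g=h=0$, i.e.\ $\hat w_l$ is constant, so the two seminorms share the kernel $\N$.

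Finally I would carry out the scaling. Since $E$ is a cuboid, the map \eqref{mapping} is affine with diagonal Jacobian $\DF_E=\operatorname{diag}(h^x_E,h^y_E,h^z_E)$, and $h^x_E\sim h^y_E\sim h^z_E\sim h_E$ by shape regularity. Using the chain rule $\nabla_\x w = \DF_E^{-T}\nabla_{\hat\x}\hat w$, the change of variables $d\x = J_E\,d\hat\x$, and \eqref{scaling-of-mapping}, I obtain $\|\nabla w\|^2_E\sim h_E^{-2}J_E\,\|\nabla\hat w\|^2_{\Eh}\sim h_E\,\|\nabla\hat w\|^2_{\Eh}$, equivalently $\|\nabla\hat w\|^2_{\Eh}\sim h_E^{-1}\|\nabla w\|^2_E$. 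Combining the invariance of the edge differences, the reference equivalence, and $|E|\sim h_E^3$ yields
\[
|E|\sum_{e\in\partial E}\sum_{l=1}^3\big(w_l(\r_j)-w_l(\r_k)\big)^2 \sim h_E^3\,\|\nabla\hat w\|^2_{\Eh}\sim h_E^3\cdot h_E^{-1}\|\nabla w\|^2_E = h_E^2\,\|\nabla w\|^2_E,
\]
which is \eqref{discrete-grad}, with all constants independent of $h$ and $E$ because the reference equivalence is dimensionless and the scaling constants are controlled by shape regularity. The main obstacle is the kernel computation for the edge-difference seminorm, i.e.\ verifying that the vanishing of all twelve edge differences forces a trilinear function to be constant; the rest is invariance under the vertex-preserving map and routine scaling for the diagonal cuboid Jacobian.
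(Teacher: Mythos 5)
Your proof is correct, but it follows a different route from the paper's. The paper's proof rests on an exact pointwise identity: for a trilinear $w$, each component of $\nabla w$ evaluated at a vertex $\r_j$ equals the difference quotient $(w_l(\r_j)-w_l(\r_k))/h_\alpha$ along the edge emanating from $\r_j$ in the corresponding coordinate direction. Summing over vertices, the left-hand side of \eqref{discrete-grad} is therefore (up to the shape-regularity equivalence $h^\alpha_E\sim h_E$ and a factor counting each edge twice) identically $h_E^2\|\nabla w\|^2_{Q,E}$, and the only nontrivial step remaining is the equivalence $\|\nabla w\|_{Q,E}\sim\|\nabla w\|_E$ of the vertex-quadrature norm with the $L^2$ norm, which is the same finite-dimensional argument as in Lemma~\ref{coercivity-lemma}. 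You instead treat the edge-difference sum as a seminorm on $\Qc_1(\Eh,\N)$, identify its kernel as the constants, invoke equivalence of seminorms with common kernel on a finite-dimensional space, and then scale. Both arguments are sound and both ultimately rely on finite dimensionality plus the scaling bounds \eqref{scaling-of-mapping}; the paper's version is shorter and makes the geometric content (vertex gradients are edge difference quotients) explicit, while yours is more generic in that it never needs that identity and would apply to any seminorm whose kernel is the constants. Two small remarks: the Jacobian of \eqref{mapping} for a general cuboid is $\DF_E = R\,\operatorname{diag}(h^x_E,h^y_E,h^z_E)$ with $R$ a rotation rather than literally diagonal, but your scaling step only uses \eqref{scaling-of-mapping} and is unaffected; and your kernel computation can be shortened by noting that vanishing of all twelve edge differences forces all eight vertex values to coincide, and a $\Qc_1$ function is determined by its vertex values, hence is constant.
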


\begin{proof}
For $w \in \W_h$, at any vertex $\r_j$, each component of $\nabla w$ is of the form $(w_l(\r_j) - w_l(\r_k))/h_{\alpha}$, $\alpha \in \{x,y,z\}$. Then
\begin{equation}\label{discrete-grad-Q}
  |E|\sum_{e \in \partial E} \sum_{l=1}^3 \left(w_l(\r_j) - w_l(\r_k)\right)^2 \sim h_E^2\|\nabla w\|^2_{Q,E}.
\end{equation}
Then \eqref{discrete-grad} follows from \eqref{discrete-grad-Q} and $\|\nabla w\|_{Q,E} \sim \|\nabla w\|_E$, which can be shown in a way similar to the proof of Lemma~\ref{coercivity-lemma}.
\end{proof}

We make the following assumption on the mesh:

\begin{enumerate}[label={\bf(A\arabic*)}]
  \setcounter{enumi}{1}
\item \label{A2} $\Tc_h$ can be agglomerated into a conforming mesh of $k_1 \times k_2 \times k_3$ macroelements $M$, $k_1, k_2, k_3 \in \{2,3\}$.
\end{enumerate}

We need to consider macroelements in order to construct the interpolant in Lemma~\ref{lem:Nedelec} below, which is needed in the proof of Lemma~\ref{lem:Ph}.
Let $\mathbb{P}_h \psi$ denote the $L^2$-projection onto piecewise constants on the macroelement grid. Let $\mathbb{P}^M_h = \mathbb{P}_h|_M$. 

\begin{lemma}\label{no quad glbl main lemma-1}
  If \ref{A1} and \ref{A2} hold, there exist positive constants $C_1$ and $\tilde{C}_1$ independent of $h$ such that for each $w \in \W_h$ there exists $q \in \Q_h$ satisfying
\begin{align}
  (\curl q, w)_Q &\geq C_1 \|w - \mathbb{P}_h w\|^2 \quad \text{and} \quad
  \|\curl q\| \leq \tilde{C}_1\|w - \mathbb{P}_h w\|.\label{no quad glbl-2}
\end{align}
\end{lemma}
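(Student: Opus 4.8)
The plan is to reuse the explicit function $q \in \Q_h$ built in the proof of Lemma~\ref{no quad glbl lemma 1}, namely \eqref{no quad glbl varphi}, and to control both quantities in \eqref{no quad glbl-2} through the common edge-difference functional
\begin{equation*}
  Q(w) := \sum_{E \in \Tc_h} \sum_{e \in \partial E} \sum_{l=1}^3 |E|\big(w_l(\r_j) - w_l(\r_k)\big)^2 ,
\end{equation*}
where $\r_j,\r_k$ are the endpoints of $e$. The lower bound $(\curl q, w)_Q \ge C\,Q(w)$ is exactly \eqref{no quad glbl-1} (it uses \ref{A1}, which is assumed), so the first inequality in \eqref{no quad glbl-2} follows once $Q(w)$ bounds $\|w - \Ph w\|^2$ from below. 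For the second inequality I would first bound $\|\curl q\|$ from above by $Q(w)^{1/2}$, and then establish the matching upper bound $Q(w) \le C\|w - \Ph w\|^2$. Thus everything reduces to two ingredients: (i) an $L^2$ estimate of $\curl q$ in terms of $Q(w)$, and (ii) the equivalence $Q(w) \sim \|w - \Ph w\|^2$.

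For (i), since $q$ is the finite sum \eqref{no quad glbl varphi} with coefficients $|f|\big(w_l(\r_j)-w_l(\r_k)\big)$, and on each $E$ only a bounded number of edge basis functions $q_f^e$ are supported, I have $\|\curl q\|_E^2 \le C \sum_{f,e} |f|^2\big(w_l(\r_j)-w_l(\r_k)\big)^2 \|\curl q_f^e\|_E^2$. Using the Piola relation \eqref{covariant property}, the mapping scalings \eqref{scaling-of-mapping}, and the fact that the reference counterparts $\curl \hat q_f^e$ are $O(1)$, I get $\|\curl q_f^e\|_E \sim h_E^{-1/2}$, while $|f| \sim h_E^2$. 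Hence each term is $\sim |E|\big(w_l(\r_j)-w_l(\r_k)\big)^2$, and summing over $E$ gives $\|\curl q\|^2 \le C\,Q(w)$.

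For (ii), Lemma~\ref{lem:discrete-grad} gives $Q(w) \sim \sum_{E\in\Tc_h} h_E^2\|\nabla w\|_E^2$, and since $h_E \sim h_M$ for $E \subset M$ (shape-regularity together with \ref{A2}) this is $\sim \sum_M h_M^2\|\nabla w\|_M^2$. As $\mathbb{P}^M_h w$ is the mean of $w$ over $M$ and $w \in \W_h^1 \subset H^1(\Omega)$, the desired equivalence $Q(w) \sim \|w - \Ph w\|^2 = \sum_M \|w - \mathbb{P}^M_h w\|_M^2$ is precisely the two-sided Poincar\'e estimate $h_M\|\nabla w\|_M \sim \|w - \mathbb{P}^M_h w\|_M$ on each macroelement. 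Combining (i), (ii) and \eqref{no quad glbl-1} then yields both inequalities of \eqref{no quad glbl-2}, with $C_1$ coming from the forward Poincar\'e bound and $\tilde{C}_1$ from the reverse one.

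The main obstacle is the reverse Poincar\'e inequality $h_M\|\nabla w\|_M \le C\|w - \mathbb{P}^M_h w\|_M$ (the forward direction being the standard Poincar\'e inequality). This estimate is false for general $H^1$ functions, so I must exploit that $w$ lies in the fixed finite-dimensional space $\W_h^1$. I would prove it by scaling each macroelement to a reference configuration: on the quotient of $\W_h^1(\hat M)$ by constants, both $\|\nabla\cdot\|_{\hat M}$ and $\|\cdot\|_{\hat M}$ are norms, hence equivalent on this finite-dimensional space; assumption \ref{A2} allows only finitely many combinatorial macroelement types, and shape-regularity confines each to a compact family of geometries bounded away from degeneracy, so the equivalence constants are uniform in $h$. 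Rescaling back reinserts the factor $h_M$ and completes the estimate.
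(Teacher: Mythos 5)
Your proposal is correct and follows the paper's proof in its overall structure: you use the same $q$ from \eqref{no quad glbl varphi}, the same intermediate edge-difference quantity, and the same combination of \eqref{no quad glbl-1}, Lemma~\ref{lem:discrete-grad}, and the Poincar\'e (Friedrichs) inequality on macroelements for the first bound in \eqref{no quad glbl-2}. Where you diverge is the upper bound on $\|\curl q\|$. The paper never passes through $\|\nabla w\|_M$ for this direction: after bounding $\|\curl q\|_E^2 \le C\,|E|\sum_{e}\sum_l (w_l(\r_j)-w_l(\r_k))^2$ exactly as you do, it inserts $\mathbb{P}^M_h w_l$ by the triangle inequality at the vertices and uses the equivalence of the vertex-quadrature norm with the $L^2$ norm (Lemma~\ref{coercivity-lemma}) to conclude $\|\curl q\|_E^2 \le C\|w-\mathbb{P}^M_h w\|_E^2$ directly. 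Your route instead converts the edge differences back to $h_M^2\|\nabla w\|_M^2$ and then invokes a reverse Poincar\'e estimate $h_M\|\nabla w\|_M \le C\|w-\mathbb{P}^M_h w\|_M$. That estimate is true here and your conclusion stands, but the compactness-over-macroelement-geometries machinery you propose for it is heavier than needed: since $w-\mathbb{P}^M_h w$ is a piecewise trilinear polynomial on each $E\subset M$, the standard elementwise inverse inequality $\|\nabla v\|_E\le Ch_E^{-1}\|v\|_E$ applied to $v=w-\mathbb{P}^M_h w$ and summed over $E\subset M$ gives it immediately, with no dependence on the macroelement's combinatorial type. Either that simplification or the paper's vertex-value argument closes the one step you identified as the main obstacle.
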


\begin{proof}
Let $q$ be defined as in \eqref{no quad glbl varphi}. Then, using \eqref{no quad glbl-1} and \eqref{discrete-grad}, we have
\begin{align}
  (\curl q, w)_Q \geq C \sum_{E \in \mathcal{T}_h} \sum_{e \in \partial E} \sum_{l=1}^3 |E| \left( w_l(\r_{j}) - w_l(\r_{k}) \right)^2
\geq C \sum_{E \in \mathcal{T}_h} h_E^2 \|\nabla w\|^2_E \geq C \sum_{M} h_M^2\|\nabla w\|^2_M, \label{no quad glbl lemma 2 ineq1}
\end{align}
where we used that $w \in H^1(\Omega)$ in the last inequality. By the Friedrichs' inequality on $M$ we get
\begin{align}
\|\nabla w\|_M &= \|\nabla (w - \mathbb{P}^M_h w)\|_M \geq  \frac{C}{h_M} \|w - \mathbb{P}^M_h w\|_M. \label{no quad glbl lemma 2 ineq2}
\end{align}
Then, combining \eqref{no quad glbl lemma 2 ineq1} and \eqref{no quad glbl lemma 2 ineq2} implies
\begin{align*}
(\curl q, w)_Q \geq C \sum_{M} \|w - \mathbb{P}^M_h w\|^2_M,
\end{align*}
which is the first inequality in \eqref{no quad glbl-2}. For the second inequality in \eqref{no quad glbl-2},
we write, for $E \subset M$,
\begin{equation*}
|w_l(\r_{j}) - w_l(\r_{k})| \le |w_l(\r_{j}) - \mathbb{P}^M_h w_l| + |w_l(\r_{k}) - \mathbb{P}^M_h w_l|,
\end{equation*}
which implies, using \eqref{no quad glbl varphi}, $\|q_f^e\|_{0,\infty,E} \sim h_E^{-1}$ (cf. \eqref{maps-cov} and \eqref{scaling-of-mapping}), and an inverse inequality,
\begin{align}
  \|\curl q\|^2_E & \le C h_E^{-2}\|q\|_E^2 \le
  C h_E^{-2}|E| \sum_{e \in \partial E} \sum_{l=1}^3 |f|^2 \left( w_l(\r_{j}) - w_l(\r_{k}) \right)^2 h_E^{-2}
  \nonumber \\
  &
  \le C |E| \sum_{l=1}^3 \sum_{i=1}^8\left( w_l(\r_{i}) - \mathbb{P}^M_h w_l) \right)^2
  \le C \sum_{l=1}^3 \|w_l - \mathbb{P}^M_h w_l\|_{Q,E}^2 \le C \|w - \mathbb{P}^M_h w\|_E^2, \label{w-Pw}
\end{align}
where we used Lemma~\ref{coercivity-lemma} in the last inequality. The second inequality in \eqref{no quad glbl-2} follows from a summation over elements $E$.
\end{proof}

Before we establish control on $\mathbb{P}_h w$, we need an auxiliary result. Consider the lowest order N\'ed\'elec space $\tilde \Theta_h$ \cite{Nedelec}, defined on the reference element $\hat E$ as
$
\tilde \Theta_h(\hat E) = \Pc_{0,1,1} \times \Pc_{1,0,1} \times \Pc_{1,1,0},
$
where $\Pc_{k,l,m}$ denotes the space of polynomials of degree at most $k$, $l$, and $m$ in the corresponding variable. The degrees of freedom of $\tilde \Theta_h(\hat E)$ are given as $\int_{\hat e} \hat\theta\cdot\hat t$ for all edges $\hat e$ of $\hat E$. The space $\tilde \Theta_h$ on $\Omega$ is defined via the covariant transformation \eqref{maps-cov}. It follows from \eqref{theta ref vector} and \eqref{theta_h definition} that $\tilde \Theta_h \subset \Theta_h$.

\begin{lemma}\label{lem:Nedelec}
  There exists an interpolant $\Pi : H^1(\Omega,\R^3) \rightarrow \tilde{\Theta}_h$ such that $\forall \phi \in H^1(\Omega,\R^3)$,
\begin{align}\label{Pi-curl}
  (\curl (\Pi \phi), p_0)_M &= (\curl \phi, p_0)_M \ \forall M, \ \forall p_0 \in \R^3 \text{ on } M,
  \quad \text{and} \quad \|\Pi \phi\|_{\curl} \leq C \|\phi\|_1.
\end{align}
\end{lemma}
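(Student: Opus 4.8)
The plan is to obtain $\Pi$ in two stages: first a globally defined, $H(\curl)$-conforming quasi-interpolant into $\tilde\Theta_h$ that is stable in $H^1$, and then a bounded correction that enforces the macroelement averages of the curl in the first relation of \eqref{Pi-curl}. Before anything else I would rewrite that relation in boundary form. Since $p_0$ is constant on $M$, Stokes' theorem gives $(\curl\phi,p_0)_M=\int_{\partial M}\phi\cdot(p_0\times n)$ and likewise for $\Pi\phi$, so the first condition in \eqref{Pi-curl} is equivalent to preserving, for each $M$, the three tangential moments $\int_{\partial M}\phi\cdot(p_0\times n)$, $p_0\in\R^3$. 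A useful consequence is that any field in $\tilde\Theta_h$ whose tangential trace vanishes on $\partial M$ contributes zero to $\int_M\curl(\cdot)$; thus these moments are governed entirely by the edge circulations on $\partial M$, which is exactly the information constrained by interelement tangential continuity.

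For the first stage I would use a Scott--Zhang/Cl\'ement-type operator $\Pi^{0}:H^1(\Om,\R^3)\to\tilde\Theta_h$ built from regularized edge functionals. The regularization is essential because the canonical N\'ed\'elec degrees of freedom $\int_{\hat e}\hat\theta\cdot\hat t$ are not bounded on $H^1(\Om,\R^3)$ in three dimensions, edges having codimension two. Replacing each edge circulation by an average of $\phi\cdot t$ over a fixed patch of cells around the edge yields functionals that are bounded on $H^1$, reproduce $\tilde\Theta_h$, and are single-valued across element interfaces, so that $\Pi^0\phi\in\tilde\Theta_h\subset\Theta_h\subset H(\curl;\Om)$ by the conformity of Lemma~\ref{h_curl_conf}. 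Standard local scaling to the reference cube, using \eqref{scaling-of-mapping} and \eqref{maps-cov}, together with the commuting identity $\curl\theta\overset{\mathcal{P}}{\leftrightarrow}\curl\hat\theta$ from \eqref{covariant property} to control the curl, gives the stability bound $\|\Pi^0\phi\|_{\curl}\le C\|\phi\|_1$, which is the second assertion of \eqref{Pi-curl}.

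For the second stage I would add a correction $\delta\in\tilde\Theta_h$ so that $\Pi=\Pi^0+\delta$ satisfies the three macroelement moment conditions. Writing $b_M=\int_M\curl(\phi-\Pi^0\phi)\in\R^3$, I need $\delta$ with $\int_M\curl\delta=b_M$ for every $M$ and $\|\delta\|_{\curl}\le C\,(\sum_M|b_M|^2)^{1/2}\le C\|\phi\|_1$. By the boundary reduction above this amounts to prescribing three tangential moments per macroelement on $\partial M$, and the key algebraic fact I would establish is that the linear map $\tilde\Theta_h\to(\R^3)^{\#\{M\}}$, $q\mapsto(\int_M\curl q)_M$, is surjective with a right inverse bounded independently of $h$. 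Here assumption \ref{A2} is decisive: the fixed finite set of macroelement shapes makes this a finite family of reference problems with uniform constants, and the abundance of boundary edges of each macroelement supplies the degrees of freedom needed to realize the three prescribed moments while keeping the correction tangentially continuous, hence $H(\curl)$-conforming.

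The step I expect to be the main obstacle is precisely this bounded surjectivity in the second stage. The difficulty is structural: the macroelement moments $\int_M\curl(\cdot)$ depend only on the tangential circulations on $\partial M$, a face of which is shared with a neighbouring macroelement, so adjusting the moments of one $M$ inevitably perturbs those of its neighbour; and interior circulations, although plentiful, are useless for this purpose since they leave $\int_M\curl(\cdot)$ unchanged. Reconciling the three moment constraints per macroelement with global single-valuedness of the boundary circulations and with the $H^1$-bound on $\delta$ is therefore the crux, and it is exactly what the macroelement hypothesis \ref{A2} is engineered to make tractable; once this right inverse is in hand, both properties in \eqref{Pi-curl} follow immediately, and the interpolant is then available for the control of $\mathbb{P}_h w$ in Lemma~\ref{lem:Ph}.
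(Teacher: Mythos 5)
Your reduction of the first condition in \eqref{Pi-curl} to the preservation of the tangential moments $\int_{\partial M}\phi\cdot(p_0\times n)$ matches the paper's first step, and the two-stage structure (stable quasi-interpolant plus moment correction) is a legitimate strategy. But the proposal has a genuine gap exactly where you yourself locate ``the main obstacle'': the bounded right inverse of $q\mapsto(\int_M\curl q)_M$ is asserted, not constructed, and the justification you offer (a finite family of reference problems with uniform constants, courtesy of \ref{A2}) does not address the difficulty you correctly describe --- a macroelement face is shared with a neighbour, so the moment constraints of adjacent macroelements are coupled through common boundary circulations, and a per-macroelement reference argument cannot by itself produce a globally consistent, uniformly bounded correction. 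What the paper does, and what is missing from your proposal, is to enforce the \emph{stronger, face-local} condition $\langle\Pi\phi\times n_f,p_0\rangle_f=\langle\phi\times n_f,p_0\rangle_f$ on every macroelement face $f$ separately. Because $M$ is a $k_1\times k_2\times k_3$ agglomerate with $k_i\ge 2$, each face $f$ of $M$ contains at least one vertex of $\Tc_h$ interior to $f$, hence mesh edges $e_y$, $e_z$ interior to $f$; the circulations on these edges influence only $\int_f(\Pi\phi)_2$ and $\int_f(\Pi\phi)_3$, belong to no other macroelement face, and so can be tuned to achieve $\int_f(\Pi\phi)_i=\int_f\phi_i$ with no interaction between different faces, while the single correction on a shared face serves both of its macroelements consistently. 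This is precisely the role of \ref{A2}: it dissolves the coupling problem rather than solving it by compactness. Note also that your dismissal of ``interior circulations'' conflates edges interior to $M$ (indeed useless) with edges interior to a \emph{face} of $\partial M$ (the ones that do all the work).

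On the other hand, your observation that the canonical N\'ed\'elec edge functionals $\int_e\phi\cdot t$ are not bounded on $H^1(\Om,\R^3)$ (edges have codimension two) is correct and is a point the paper passes over quickly --- its proof sets $\int_e\Pi\phi\cdot t=\int_e\phi\cdot t$ on the remaining edges and appeals to ``a scaling argument'' for the stability bound. A regularized Cl\'ement/Scott--Zhang-type choice of the remaining edge values, as you propose, is a clean way to make that part rigorous, provided it is combined with the face-interior-edge correction above rather than with the unconstructed global right inverse.
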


\begin{proof}
  Since $(\curl \phi, p_0)_M = \langle \phi \times n_M, p_0 \rangle_{\partial M}$, we need 
  $\langle \Pi \phi \times n_f, p_0 \rangle_{f} =  \langle \phi \times n_f, p_0 \rangle_{f}$ for all faces $f$ of $M$. Consider a vertical face $f$ with $n_f = (1,0,0)^T$. Let $\phi = (\phi_1,\phi_2,\phi_3)^T$; then $\phi \times n_f = (0,\phi_3,-\phi_2)$. Note that there is at least one vertex $\r$ from $\Tc_h$ that is interior to $f$. Let $e_y$ and $e_z$ be $y$- and $z$-oriented edges that meet at $\r$. On the rest of the edges on $f$ we set $\int_e \Pi\phi\cdot t = \int_e \phi\cdot t$. Then we set $(\Pi\phi)_2$ on $e_y$ such that $\int_f(\Pi\phi)_2 = \int_f \phi_2$ and we set $(\Pi\phi)_3$ on $e_z$ such that $\int_f(\Pi\phi)_3 = \int_f \phi_3$. The construction on the rest of the faces of $M$ is similar. This construction satisfies the first property in \eqref{Pi-curl}. The stability bound in \eqref{Pi-curl} follows from a scaling argument.
\end{proof}

\begin{lemma}\label{lem:Ph}
There exists a positive constant $\tilde C_2$ independent of $h$ such that for every $w \in \W_h$ there exists $g \in \Q_h$ satisfying
\begin{align}
(\curl g, \mathbb{P}_h w)_Q &=  \|\mathbb{P}_h w\|^2 \quad \text{and} \quad \|\curl g\| \leq C_2  \|\mathbb{P}_h w\|. \label{no quad glbl-3}
\end{align}
\end{lemma}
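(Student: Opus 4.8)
The plan is to obtain $g$ by a surjectivity argument. Since $\Ph w$ is piecewise constant on the macroelement grid, equal to $\bar w_M := \Ph^M w$ on each $M$, it suffices to produce $g \in \Q_h$ whose curl reproduces these constant skew matrices in the quadrature pairing. I would first construct a matrix field $\phi \in H^1(\Omega,\M)$ with $\phi = 0$ on $\Gn$, whose macroelement curl-averages match $\bar w_M$, i.e. $\int_M \curl \phi\,d\x = |M|\,\bar w_M$ for every macroelement $M$, together with the stability bound $\|\phi\|_1 \le C\|\Ph w\|$. A local model is available on each $M$: writing the rows of the constant skew matrix $\bar w_M$ as vectors $(\bar w_M)_i$, the field with rows $\tfrac12 (\bar w_M)_i \times (\x - \x_M)$, where $\x_M$ is the center of $M$, has constant curl equal to $\bar w_M$. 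The global $H^1$ field with the prescribed averages is then obtained from a stable right inverse of the (finitely many) macro-curl-average functionals on $H^1(\Omega,\M)$, and the bound $\|\phi\|_1 \le C\|\Ph w\|$ follows from the scalings \eqref{scaling-of-mapping} together with $\|\bar w_M\|_M^2 = |M|\,|\bar w_M|^2$.

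Next I would set $g = \Pi\phi$, applying the N\'ed\'elec interpolant of Lemma~\ref{lem:Nedelec} row by row. Because $\tilde\Theta_h \subset \Theta_h$, each row of $g$ lies in $\Theta_h$ and vanishes on $\Gn$, so $g \in \Q_h$. The curl-average property \eqref{Pi-curl}, applied to each row, gives $\int_M \curl g\,d\x = \int_M \curl\phi\,d\x = |M|\,\bar w_M$ for all $M$, while the stability bound in \eqref{Pi-curl} yields $\|\curl g\| \le \|g\|_{\curl} \le C\|\phi\|_1 \le C\|\Ph w\|$, which is the second estimate in \eqref{no quad glbl-3}.

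The exact equality then rests on the observation that the vertex quadrature is exact for the integrands at hand. A short computation from the lowest-order N\'ed\'elec definition shows that $\curl \tilde\Theta_h(\hat E) \subset \RT_0(\hat E)$, so for $g = \Pi\phi$ the reference matrix $\curl\hat g$ has rows in $\RT_0(\hat E)$, whose entries are affine in a single variable; since $\DF_E$ is constant on a cuboid, the integrand $(\curl\hat g)\DF_E^T : \bar w_M$ is integrated exactly by the vertex rule, whence $(\curl g, \bar w_M)_{Q,E} = (\curl g, \bar w_M)_E$ on each $E$ by \eqref{covariant property} and \eqref{def}. Summing over $E \subset M$ and using that $\bar w_M$ is constant gives $(\curl g, \bar w_M)_{Q,M} = \big(\int_M \curl g\,d\x\big) : \bar w_M = |M|\,|\bar w_M|^2 = \|\bar w_M\|_M^2$, and summing over $M$ yields $(\curl g, \Ph w)_Q = \sum_M \|\bar w_M\|_M^2 = \|\Ph w\|^2$, the first identity in \eqref{no quad glbl-3}.

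The main obstacle I anticipate is Step~1: producing a genuinely $H^1(\Omega)$-conforming $\phi$ with the prescribed macroelement curl-averages, rather than a broken field. The naive local models $\tfrac12(\bar w_M)_i \times (\x - \x_M)$ are discontinuous across macroelement interfaces, and a single-valued tangential trace is required for $\Pi$ to be well defined; moreover, one cannot impose zero tangential trace on $\partial M$, since that would force the curl-average to vanish. Thus the averaging constraints genuinely couple neighboring macroelements through their shared interfaces, and the stability of the right inverse must be shown uniformly in $h$, with the compatibility of the constraints with $\phi = 0$ on $\Gn$ relying on assumption~\ref{A1}. The remaining delicate point, though a purely computational one, is the verification $\curl\tilde\Theta_h(\hat E) \subset \RT_0(\hat E)$, which is precisely what makes the vertex quadrature exact and promotes the average identity into the exact equality $(\curl g, \Ph w)_Q = \|\Ph w\|^2$.
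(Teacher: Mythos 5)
Your overall architecture --- build a continuous matrix field, interpolate it with the N\'ed\'elec operator of Lemma~\ref{lem:Nedelec}, and then observe that the vertex rule is exact because $\curl\tilde\Theta_h(\hat E)\subset\RT_0(\hat E)$ so that $\curl \hat g:\Ph w$ is affine on $\hat E$ --- is exactly the paper's, and your quadrature-exactness and stability steps are correct as written. The problem is Step~1, which you yourself flag as the main obstacle: you never actually produce the $H^1(\Omega,\M)$ field $\phi$ with $\phi=0$ on $\Gn$ and prescribed macroelement curl-averages. Appealing to ``a stable right inverse of the (finitely many) macro-curl-average functionals'' does not work as stated: the number of macroelements grows like $h^{-3}$, so the functionals are not finitely many uniformly in $h$, and the right inverse would have to be bounded uniformly in both $h$ and the number of constraints; moreover the range of $\curl$ on matrix fields is constrained (row-wise divergence-free), so it is not even clear a priori that arbitrary constant skew averages $\bar w_M$ are attainable compatibly with the boundary condition on $\Gn$. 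This is a genuine gap, not a technicality.

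The paper closes this gap with an algebraic device you did not use: the operator $S(\tau)=\tr(\tau)I-\tau^T$ and the identity \eqref{as prop3}, $\curl q : w = -\Xi(\dvr S(q)):w$ for skew $w$. This converts the curl problem into a divergence problem: one invokes the classical stable right inverse of the divergence (Galdi) to get $z\in H^1(\Omega,\M)$ with $z=0$ on $\Gn$, $-\Xi\,\dvr z=\Ph w$ and $\|z\|_1\le C\|\Ph w\|$, and then sets $g=\Pi(S^{-1}z)$. The skew part of $\curl(S^{-1}z)$ then equals $\Ph w$ pointwise, so $(\curl(S^{-1}z),\Ph w)=\|\Ph w\|^2$ exactly, and the macroelement curl-average property \eqref{Pi-curl} of $\Pi$ transfers this identity to $g$; your quadrature argument then finishes the proof. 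In short, you need either to supply a uniform-in-$h$ construction of your $\phi$ (which essentially amounts to reproving a Bogovskii-type result for the constrained curl), or to route the construction through $S$ and the divergence as the paper does.
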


\begin{proof}
Let $w \in \W_h$ be arbitrary. There exists $z \in H^1(\Omega,\mathbb{M})$ with $z = 0$ on $\Gamma_N$ such that \cite{Galdi}
\begin{align}\label{div-soln}
-\Xi \, \text{div} \, z &= \mathbb{P}_h w \quad \text{and} \quad \|z\|_1 \leq C \|\mathbb{P}_h w\|.
\end{align}
Let $\tilde \Q_h = \{\tilde q \in H(\curl,\Omega,\mathbb{M}): \tilde q_i = \tilde \theta_i \in \tilde\Theta_h,\ i=1,2,3,  \ \tilde q=0 \ \text{on}\ \ \Gamma_N\}$. It holds that $\tilde \Q_h \subset \Q_h$.
Let $g = \Pi(S^{-1} z) \in \tilde \Q_h$, where $\Pi$ is constructed in Lemma~\ref{lem:Nedelec} and applied row-wise. Using \eqref{Pi-curl} and \eqref{div-soln}, we have
\begin{align}
  \|\curl g\| &\leq \|\Pi(S^{-1} z)\|_{\curl} \leq C \|S^{-1} z\|_1 \leq C \|z\|_1 \leq C \|\mathbb{P}_h w\|.
\end{align}
Using \eqref{Pi-curl}, \eqref{as prop3}, and \eqref{div-soln}, we obtain
\begin{align}
(\curl g, \mathbb{P}_h w) = (\curl (\Pi S^{-1} z), \mathbb{P}_h w) = (\curl (S^{-1} z), \mathbb{P}_h w) = - (\Xi \, (\text{div} \, SS^{-1}z), \mathbb{P}_h w) = \|\mathbb{P}_h w\|^2.
\end{align}
Finally, since $g \in \tilde \Q_h$, the integrated quantity $\curl g: \mathbb{P}_h w$ is linear on $\hat{E}$, implying $(\curl g, \mathbb{P}_h w) = (\curl g, \mathbb{P}_h w)_Q$, which gives the first property in \eqref{no quad glbl-3}.
\end{proof}

\begin{lemma} \label{no quad main lemma}
If \ref{A1} and \ref{A2} hold, there exist positive constants $C_3$ and $\tilde C_3$ independent of $h$ such that for every $w \in \W_h$, there exists $\tilde q \in \Q_h$ satisfying
\begin{align}
(\curl \tilde q, w)_Q &\geq C_3 \|w\|^2, \quad \text{and} \quad \|\curl \tilde q\| \leq \tilde C_3 \|w\|.
\end{align}
\end{lemma}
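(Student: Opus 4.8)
The plan is to assemble $\tilde q$ as a suitably scaled combination of the two functions produced by the preceding lemmas, one of which controls the non-constant part $w - \mathbb{P}_h w$ and the other the piecewise-constant part $\mathbb{P}_h w$. Given $w \in \W_h$, let $q \in \Q_h$ be the function from Lemma~\ref{no quad glbl main lemma-1} and let $g \in \Q_h$ be the function from Lemma~\ref{lem:Ph}, and set $\tilde q = q + \delta g \in \Q_h$ for a small parameter $\delta > 0$ to be fixed at the end. Since $\mathbb{P}_h$ is the $L^2$-orthogonal projection, we have the Pythagorean splitting $\|w\|^2 = \|w - \mathbb{P}_h w\|^2 + \|\mathbb{P}_h w\|^2$, and the goal is to show that $(\curl \tilde q, w)_Q$ controls both summands simultaneously.

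Expanding, $(\curl \tilde q, w)_Q = (\curl q, w)_Q + \delta (\curl g, w)_Q$. Lemma~\ref{no quad glbl main lemma-1} gives $(\curl q, w)_Q \ge C_1 \|w - \mathbb{P}_h w\|^2$. For the second term I would write $(\curl g, w)_Q = (\curl g, \mathbb{P}_h w)_Q + (\curl g, w - \mathbb{P}_h w)_Q$, use the identity $(\curl g, \mathbb{P}_h w)_Q = \|\mathbb{P}_h w\|^2$ from Lemma~\ref{lem:Ph} for the first piece, and bound the coupling piece by boundedness of the quadrature form, $(\curl g, w - \mathbb{P}_h w)_Q \le C\|\curl g\|\,\|w - \mathbb{P}_h w\| \le C C_2 \|\mathbb{P}_h w\|\,\|w - \mathbb{P}_h w\|$, the last step using the stability bound $\|\curl g\| \le C_2\|\mathbb{P}_h w\|$. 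Writing $a = \|w - \mathbb{P}_h w\|$ and $b = \|\mathbb{P}_h w\|$, this yields $(\curl \tilde q, w)_Q \ge C_1 a^2 + \delta b^2 - \delta C C_2 a b$. Applying Young's inequality to the cross term absorbs $\tfrac{C_1}{2} a^2$, leaving $\tfrac{C_1}{2} a^2 + (\delta - O(\delta^2)) b^2$; choosing $\delta$ small enough (depending only on $C_1$, $C_2$) that the $b^2$ coefficient stays at least $\delta/2$ gives $(\curl \tilde q, w)_Q \ge C_3 (a^2 + b^2) = C_3 \|w\|^2$.

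The second estimate is immediate from the triangle inequality and the stability bounds of the two lemmas: $\|\curl \tilde q\| \le \|\curl q\| + \delta \|\curl g\| \le \tilde C_1 \|w - \mathbb{P}_h w\| + \delta C_2 \|\mathbb{P}_h w\| \le \tilde C_3 \|w\|$, again using the Pythagorean splitting. The one point requiring care — the main obstacle — is the coupling term $(\curl g, w - \mathbb{P}_h w)_Q$: its boundedness cannot be quoted verbatim from Lemma~\ref{coercivity-lemma}, since $w - \mathbb{P}_h w \notin \W_h^1$. However, $w - \mathbb{P}_h w$ is element-wise continuous (trilinear minus a constant on each $E$), so the vertex-quadrature Cauchy--Schwarz inequality at the eight vertices, combined with the norm equivalences established in the proof of Lemma~\ref{coercivity-lemma}, still delivers $(\curl g, w - \mathbb{P}_h w)_Q \le C\|\curl g\|\,\|w - \mathbb{P}_h w\|$, which is all that is needed. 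Everything else is routine book-keeping of constants and the final calibration of $\delta$.
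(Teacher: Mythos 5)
Your proposal is correct and follows essentially the same route as the paper's proof: the same combination $\tilde q = q + \delta g$ of the functions from Lemma~\ref{no quad glbl main lemma-1} and Lemma~\ref{lem:Ph}, the same splitting of $(\curl g, w)_Q$ into the $\mathbb{P}_h w$ and $(I-\mathbb{P}_h)w$ parts, and the same Young's-inequality absorption with a calibrated $\delta$ (the paper fixes $\delta = 2C_1(1+\tilde C_2^2)^{-1}$ up front rather than at the end, a cosmetic difference). Your explicit remark that the quadrature Cauchy--Schwarz bound for the coupling term must be justified for $w - \mathbb{P}_h w \notin \W_h^1$ is a point the paper passes over silently, and your resolution of it is sound.
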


\begin{proof}
  Let $w \in \W_h$ be given, and consider $q$ and $g \in \Q_h$ satisfying \eqref{no quad glbl-2} and \eqref{no quad glbl-3}, respectively. Set $\tilde q = q + \delta g$, where $\delta = 2C_1(1 + \tilde C^2_2)^{-1}$. We then have
\begin{align*}
(\curl \tilde q, w)_Q &= (\curl q, w)_Q + \delta (\curl g, w)_Q \\
&= (\curl q, w)_Q + \delta (\curl g, \mathbb{P}_h w)_Q + \delta (\curl g, (I - \mathbb{P}_h)w)_Q \\
  & \geq C_1  \|(I - \mathbb{P}_h)w\|^2 + \delta \|\mathbb{P}_h w\|^2 - \delta  \tilde C_2 \|\mathbb{P}_h w\|  \|(I - \mathbb{P}_h)w\| \\
  & \ge C_1  \|(I - \mathbb{P}_h)w\|^2 + \delta \|\mathbb{P}_h w\|^2 - \frac{\delta \tilde C_2^2}{2}\|(I - \mathbb{P}_h)w\|^2 - \frac{\delta}{2}\|\mathbb{P}_h w\|^2 \\
&= C_1 (1 + \tilde C^2_2)^{-1}  \|w\|^2.
\end{align*}
Also, $\|\curl \tilde q\| \leq  \tilde C_1 \|(I - \mathbb{P}_h)w\| + \delta \tilde C_2  \|\mathbb{P}_h w\| \leq C  \|w\|$, completing the proof.
\end{proof}

\begin{theorem}
If \ref{A1} and \ref{A2} hold, the MSMFE-1 method \eqref{h-weak-P1-1}--\eqref{h-weak-P1-3} has a unique solution.
\end{theorem}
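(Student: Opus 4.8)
The plan is to verify the two Babu\v{s}ka--Brezzi conditions \ref{S3-P1} and \ref{S4-P1} of Theorem~\ref{thm:msmfe-1-cond}, after which the classical mixed finite element stability theory yields existence and uniqueness at once. Condition \ref{S3-P1} is essentially free: since the stress and displacement spaces $\X_h$ and $V_h$ coincide with those of MSMFE-0, the property $\dvr\Xh(\Eh)=\Vh(\Eh)$ together with \eqref{prop-piola} forces $\dvr\tau=0$ whenever $(\dvr\tau,v)=0$ for all $v\in V_h$. On the kernel the $H(\dvr)$-norm therefore reduces to $\|\tau\|$, and the lower coercivity bound \eqref{coercivity} of Lemma~\ref{coercivity-lemma} gives \ref{S3-P1} (the additional constraint $(\tau,w)_Q=0$ only shrinks the kernel, so coercivity is preserved).

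The substantive step is \ref{S4-P1}, and here I would invoke the abstract framework of Theorem~\ref{thm:suff-cond} with the concrete spaces $S_h$, $U_h$ of \eqref{Sh-Uh}, the auxiliary space $\Q_h$ of \eqref{Qh-defn}, and $\W_h=\W_h^1$. That theorem reduces \ref{S4-P1} to three hypotheses --- the Darcy inf-sup \eqref{darcy-pair}, the containment \eqref{curl-condition}, and the curl-based inf-sup \eqref{stokes-pair} --- under the standing requirement that $(\cdot,\cdot)_Q^{1/2}$ be a norm on $\W_h^1$ equivalent to $\|\cdot\|$. This last equivalence is precisely the second assertion of Lemma~\ref{coercivity-lemma}, while \eqref{darcy-pair} and \eqref{curl-condition} are furnished by Lemma~\ref{lem:two-conditions}.

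It then remains only to deduce \eqref{stokes-pair} from the constructive estimate of Lemma~\ref{no quad main lemma}, which is where assumptions \ref{A1} and \ref{A2} enter. Given $0\ne w\in\W_h^1$, that lemma produces $\tilde q\in\Q_h$ with $(\curl\tilde q,w)_Q\ge C_3\|w\|^2$ and $\|\curl\tilde q\|\le\tilde C_3\|w\|$; using this $\tilde q$ as a test function in the supremum yields
\[
\sup_{0\ne q\in\Q_h}\frac{(\curl q,w)_Q}{\|\curl q\|}\ \ge\ \frac{(\curl\tilde q,w)_Q}{\|\curl\tilde q\|}\ \ge\ \frac{C_3}{\tilde C_3}\,\|w\|,
\]
which is exactly \eqref{stokes-pair} with $c_6=C_3/\tilde C_3$. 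All hypotheses of Theorem~\ref{thm:suff-cond} are thereby met, so \ref{S4-P1} holds, and the proof concludes by appealing to Theorem~\ref{thm:msmfe-1-cond}.

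The genuine difficulty of the argument is not in this final assembly but is already absorbed into Lemma~\ref{no quad main lemma}: the construction of the $H(\curl)$-conforming space $\Theta_h$ with the exact-sequence property \eqref{curl-prop}, the explicit vertex-quadrature curl computations \eqref{curl blnr x}--\eqref{curl blnr 0}, and the macroelement/N\'ed\'elec-interpolant splitting $\tilde q=q+\delta g$ that simultaneously controls the oscillatory part $w-\Ph w$ (via Lemma~\ref{no quad glbl main lemma-1}) and the piecewise-constant part $\Ph w$ (via Lemma~\ref{lem:Ph}). Once those ingredients are in hand, the present theorem follows as a direct consequence, so I expect the write-up of this statement itself to be short.
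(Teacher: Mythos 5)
Your proposal is correct and follows essentially the same route as the paper's own proof: verify \ref{S3-P1} via the MSMFE-0 argument and reduce \ref{S4-P1} to Theorem~\ref{thm:suff-cond} with the spaces of \eqref{Sh-Uh} and \eqref{Qh-defn}, citing Lemma~\ref{lem:two-conditions} for \eqref{darcy-pair} and \eqref{curl-condition} and Lemma~\ref{no quad main lemma} for \eqref{stokes-pair}. Your explicit derivation of \eqref{stokes-pair} from the two bounds of Lemma~\ref{no quad main lemma} is a small amount of detail the paper leaves implicit, but the argument is the same.
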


\begin{proof}
  Due to Theorem~\ref{thm:msmfe-1-cond} and since \ref{S3-P1} holds, as noted in the paragraph after Theorem~\ref{thm:msmfe-1-cond}, it remains to establish \ref{S4-P1}. This is done using Theorem~\ref{thm:suff-cond} with spaces $S_h$, $U_h$, $\Q_h$, and $\W_h$ defined in \eqref{Sh-Uh} and \eqref{Qh-defn}. With these choices, conditions \eqref{darcy-pair} and \eqref{curl-condition} are established in Lemma~\ref{lem:two-conditions} and the inf-sup condition \eqref{stokes-pair} is established in Lemma~\ref{no quad main lemma}. Since the MSMFE-1 spaces satisfy $\X_h = (S_h)^3$, $V_h = U_h$, and $\W_h^1 = \W_h$, Theorem~\ref{thm:suff-cond} implies that \ref{S4-P1} holds.
\end{proof}

\subsection{Reduction to a cell-centered displacement system of the MSMFE-1 method}
The algebraic system that arises from
\eqref{h-weak-P1-1}--\eqref{h-weak-P1-3} is of the form
\eqref{sp-matrix}, where the matrix $\Ag$ is different from the one in
the MSMFE-0 method, due the the quadrature rule, i.e., $(\Ag)_{ij} =
(\tau_j,w_i)_Q$. As in the MSMFE-0 method, the quadrature rule in
$(A\sigma_h,\tau)_Q$ in \eqref{h-weak-P1-1} localizes the basis
functions interaction around vertices, so the matrix $A_{\s\s}$ is
block diagonal with $36\times 36$ blocks for the cuboid grids.  The stress can be eliminated, resulting
in the displacement-rotation system \eqref{msmfe0-system}. The matrix
in \eqref{msmfe0-system} is symmetric and positive definite, due to
\eqref{spd-matrix} and the inf-sup condition \ref{S4-P1}.

Furthermore, the quadrature rule in the stress-rotation bilinear forms
$(\g_h,\tau)_Q$ and $(\sigma_h,w)_Q$ also localizes the interaction
around vertices, since the rotation basis functions are associated
with the vertices. Therefore the matrix $\Ag$ is block-diagonal with
$3\times 36$ blocks, resulting in a block diagonal rotation 
matrix $\Ag\As^{-1}\Ag^T$ with $3 \times 3$ blocks. As a result, the rotation $\gamma_h$ can be 
easily eliminated from \eqref{msmfe0-system}, leading to the cell-centered
displacement system
\begin{align}
\left( A_{\s u}A^{-1}_{\s\s}A^T_{\s u} - A_{\s u} A^{-1}_{\s\s}A^T_{\s\g}(A_{\s\g}A^{-1}_{\s\s}A^T_{\s\g})^{-1}A_{\s\g}A^{-1}_{\s\s}A^T_{\s u} \right) u = \hat{f} \label{disp-syst}.
\end{align}
The above matrix is symmetric and positive definite, since it is a
Schur complement of the symmetric and positive definite matrix in 
\eqref{msmfe0-system}, see \cite[Theorem 7.7.6]{Horn-Johnson}.

\begin{remark}
The MSMFE-1 method is more efficient than the MSMFE-0 method and
the non-reduced MFE method, since it results in a smaller algebraic system. For example, on a
cuboid grid with approximately $m$ elements and vertices, the MSMFE-1 system
\eqref{disp-syst} has approximately $3m$ unknowns compared to $6m$ unknowns in the MSMFE-0
system \eqref{msmfe0-system} and $42m$ unknowns in the non-reduced MFE system \eqref{sp-matrix}.
\end{remark}
  

\section{Error Estimates}

The error analysis follows closely the analysis developed in \cite[Section~5]{msmfe-quads}. In particular, using the interpolant in $\ERT_0$ developed in \cite{high-order-mfmfe}, it is easy to check that Lemmas~5.1--5.4 and Corollary~5.1 in \cite{msmfe-quads} hold true in our case. Consequently, Theorem~5.1 and Theorem~5.2 also hold true, resulting in the following error estimates, where $Q^u_h$ is the $L^2$-orthogonal projection onto $V_h$ and we denote $A\in W^{j,\infty}_{\Tc_h}$ if $A\in W^{j,\infty}(E)\ \forall E \in \Tc_h$ and $\|A\|_{j,\infty, E}$ is uniformly bounded independently of $h$.

\begin{theorem}
Let $A\in W^{1,\infty}_{\Tc_h}$. If the solution $(\sigma,u,\gamma)$ of \eqref{weak-1}-\eqref{weak-3} is sufficiently smooth, for its numerical approximation $(\sigma_h,u_h,\gamma_h)$ obtained by either MSMFE-0 method \eqref{h-weak-P0-1}-\eqref{h-weak-P0-3} or the MSMFE-1 method \eqref{h-weak-P1-1}-\eqref{h-weak-P1-3}, there exists a constant $C$ independent of $h$ such that
\begin{equation}
\|\sigma-\sigma_h\|_{\dvrg}+\|u-u_h\|+\|\gamma-\gamma_h\|\leq Ch(\|\sigma\|_1+\|\dvrg \sigma\|_1+\|u\|_1+\|\gamma\|_1).
\end{equation}
Furthermore, if $A\in W^{2,\infty}_{\Tc_h}$ and the elasticity problem in $\Omega$ is $H^2$-regular,
there exists a constant $C$ independent of $h$ such that
\begin{equation}
\|Q^u_h u -u_h\|\leq Ch^2(\|\sigma\|_2+\|\gamma\|_2).
\end{equation}
\end{theorem}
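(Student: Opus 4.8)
The plan is to follow the standard mixed-method error framework, combining the stability conditions already established with interpolation and quadrature error estimates; this is precisely the structure of Lemmas~5.1--5.4 and Theorems~5.1--5.2 in \cite{msmfe-quads}, adapted to the three-dimensional $\ERT_0$ setting. I would first introduce the canonical $\ERT_0$ interpolant $\PIh$ onto $\X_h$ from \cite{high-order-mfmfe}, which satisfies the commuting property $\dvr \PIh \t = Q^u_h \dvr \t$ together with the approximation bounds $\|\t - \PIh\t\| \le Ch\|\t\|_1$ and $\|\dvr(\t - \PIh\t)\| \le Ch\|\dvr\t\|_1$. Let $Q^u_h$ be the $L^2$-projection onto $V_h$ and $Q^{\g}_h$ a projection onto the rotation space ($\W_h^0$ for MSMFE-0, $\W_h^1$ for MSMFE-1). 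Splitting $\s - \s_h = (\s - \PIh\s) + (\PIh\s - \s_h)$ and likewise for $u$ and $\g$, the interpolation parts are $O(h)$ by the approximation bounds, so the task reduces to estimating the discrete errors $\PIh\s - \s_h$, $Q^u_h u - u_h$, $Q^{\g}_h\g - \g_h$ in the norm $\|\cdot\|_{\dvrg}\times\|\cdot\|\times\|\cdot\|$, which is controlled by the stability conditions \ref{S1-P0}--\ref{S2-P0} for MSMFE-0 and \ref{S3-P1}--\ref{S4-P1} for MSMFE-1.

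Next I would derive the error equations by subtracting the discrete scheme from the continuous weak form tested against discrete functions. Because the maps $F_E$ are affine on cuboids, so that $\DF_E$ and $J_E$ are constant on each element, the divergence balance reduces cleanly through the commuting property, and the right-hand sides of the error equations consist of the interpolation error $(A(\s - \PIh\s),\t)$ together with the quadrature errors. Define the stress quadrature error $\del(A\chi,\t) := (A\chi,\t) - (A\chi,\t)_Q$, and for MSMFE-1 the stress-rotation quadrature errors $(\PIh\s,w) - (\PIh\s,w)_Q$ and $(Q^{\g}_h\g,\t) - (Q^{\g}_h\g,\t)_Q$. The crucial step is to show each of these is $O(h)$. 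By a Bramble--Hilbert and scaling argument on the reference cube, exactly as in \cite{msmfe-quads}, one obtains for $A\in W^{1,\infty}_{\Tc_h}$ the elementwise bound $|\del(A\chi,\t)| \le Ch\|\chi\|_{1,E}\|\t\|_E$ and analogous first-order bounds for the stress-rotation quadrature errors; since the vertex rule is exact for the relevant low-degree reference integrands, only the variation of $A$ and of the geometric factors contributes the factor $h$. Combining these with the stability estimate and the approximation bounds gives the first-order estimate for all three variables in their natural norms.

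For the superconvergence bound I would employ an Aubin--Nitsche duality argument. Let $e_u = Q^u_h u - u_h$ and solve the auxiliary elasticity problem with body force $e_u$; by the assumed $H^2$-regularity the dual solution $(\phi,\psi,\xi)$ satisfies $\|\phi\|_1 + \|\psi\|_1 + \|\xi\|_1 \le C\|e_u\|$. Testing the error equations against the interpolants of the dual variables and using the commuting diagram, the leading terms cancel and $\|e_u\|^2$ is bounded by products of the already-established first-order solution errors with first-order interpolation errors of the dual data, together with second-order quadrature error contributions. The latter require the sharper bound $|\del(A\chi,\t)| \le Ch^2\|\chi\|_{1,E}\|\t\|_{1,E}$, valid for $A\in W^{2,\infty}_{\Tc_h}$ when both arguments are sufficiently smooth, which is exactly where the $W^{2,\infty}$ and $H^2$-regularity hypotheses are used. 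This yields $\|Q^u_h u - u_h\| \le Ch^2(\|\s\|_2 + \|\g\|_2)$.

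The main obstacle is the quadrature error analysis, and in particular the second-order bound needed in the duality step. The first-order estimate is routine once the reference-cube computation is organized, because the vertex rule is exact on the low-degree polynomials that arise. The $O(h^2)$ bound is more delicate: one must exploit a cancellation in the quadrature error that holds only when both arguments lie in the finite element spaces, so that the integrand is of low polynomial degree up to the variable coefficient $A$ and the constant geometric factors, and one must track how the piecewise $W^{2,\infty}$ regularity of $A$ interacts with the constant-$\DF_E$ structure of the cuboid maps. The continuity of the trilinear rotation space $\W_h^1$ and the use of the quadrature rule in the stress-rotation forms for MSMFE-1 add bookkeeping, but the affine geometry of cuboid elements keeps all of these terms tractable.
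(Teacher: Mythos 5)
Your proposal is correct and follows essentially the same route as the paper, which itself gives no detailed argument but simply invokes Lemmas~5.1--5.4, Corollary~5.1, and Theorems~5.1--5.2 of the quadrilateral MSMFE paper together with the $\ERT_0$ interpolant of the higher-order MFMFE work; your sketch is in effect an unpacking of exactly that cited framework (commuting interpolant, stability, first- and second-order quadrature error bounds via Bramble--Hilbert, and a duality argument for the superconvergence).
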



\section{Numerical Results}

This section presents numerical experiments to corroborate the theoretical findings discussed in the previous sections. The implementation was carried out using the deal.II finite element library \cite{dealii}. We evaluate the convergence of the MSMFE-0 and MSMFE-1 methods on the cubic grids on $\Omega = (0, 1)^3$. We consider a homogeneous and isotropic medium characterized by
\begin{equation}\label{isotropic}
A\sigma = \frac{1}{2\mu}\left( \sigma -\frac{\lambda}{2\mu + 3\lambda} \text{tr}(\sigma)I \right),
\end{equation}
where $\mu > 0$ and $\lambda > -2\mu/3$ are the Lam\'e coefficients. We address the elasticity problem as formulated in \eqref{elast-1} and \eqref{elast-2}, incorporating Dirichlet boundary conditions. 

\medskip
\noindent
{\bf Example 1: convergence study.} We consider a problem with analytical solution given by
\begin{equation}
u_0 = \begin{pmatrix}
0 \\[8pt]
-(e^x - 1)\left[y - \cos\left(\frac{\pi}{12}\right)(y - \frac{1}{2}) + \sin\left(\frac{\pi}{12}\right)(z - \frac{1}{2}) - \frac{1}{2}\right] \\[8pt]
-(e^x - 1)\left[z - \sin\left(\frac{\pi}{12}\right)(y - \frac{1}{2}) - \cos\left(\frac{\pi}{12}\right)(z - \frac{1}{2}) - \frac{1}{2}\right]
\end{pmatrix}.
\label{ex1-soln}
\end{equation}
The body force is then determined using Lam\'e coefficients \(\lambda = 123\) and \(\mu = 79.3\). The computed solution is shown in Figure~\ref{ex1figure}. In Tables \ref{tab:convergence_table1} and \ref{tab:convergence_table2} we show errors and convergence rates on a sequence of mesh refinements, computed using the MSMFE-0 and MSMFE-1 methods. All rates are in accordance with the error analysis presented in the previous section, including displacement superconvergence.
We note that the MSMFE-1 method with trilinear rotations
exhibits convergence for the rotation of order $O(h^{1.5})$, slightly higher than the theoretical
result.
\begin{table}[H]
\centering
\caption{Relative errors and convergence rates for Example 1 via the MSMFE-0 method.}
\vspace{-0.3cm}
\begin{adjustbox}{max width=\textwidth}
\begin{tabular}{|c|c|c|c|c|c|c|c|c|c|c|}
\specialrule{1pt}{0pt}{0pt}
\rowcolor{blue!20}
 & \multicolumn{2}{c|}{$\|\sigma-\sigma_h\|$} &  \multicolumn{2}{c|}{$\|\text{div}(\sigma-\sigma_h)\|$}&   \multicolumn{2}{c|}{$\|u-u_h\|$}&  \multicolumn{2}{c|}{$\|Q^u_hu-u_h\|$}& \multicolumn{2}{c|}{$\|\gamma-\gamma_h\|$} \\ 
\hline
\rowcolor{blue!20}
$h$ & Error & Rate & Error & Rate & Error & Rate & Error & Rate & Error & Rate \\
\hline
1/2 & $4.397\text{E}-01$ & $-$  & $3.457\text{E}-01$ & $-$ & $6.031\text{E}-01$ & $-$ & $3.838\text{E}-02$ & $-$ & $3.213\text{E}-01$ & $-$ \\
\hline
1/4 & $2.234\text{E}-01$ & $0.98$  & $1.812\text{E}-01$ & $0.93$ & $3.162\text{E}-01$ & $0.93$ & $8.198\text{E}-03$ & $2.23$ & $1.630\text{E}-01$ & $0.98$ \\
\hline
1/8 & $1.121\text{E}-01$ & $0.99$  & $9.208\text{E}-02$ & $0.98$ & $1.600\text{E}-01$ & $0.98$ & $1.977\text{E}-03$ & $2.05$ & $8.180\text{E}-02$ & $0.99$ \\
\hline
1/16 & $5.610\text{E}-02$ & $1.00$  & $4.626\text{E}-02$ & $0.99$ & $8.024\text{E}-02$ & $1.00$ & $4.899\text{E}-04$ & $2.01$ & $4.094\text{E}-02$ & $1.00$ \\
\specialrule{1pt}{0pt}{0pt}
\end{tabular}
\end{adjustbox}
\label{tab:convergence_table1}
\end{table}

\begin{table}[H]
\centering
\caption{Relative errors and convergence rates for Example 1 via the MSMFE-1 method.}
\vspace{-0.3cm}
\begin{adjustbox}{max width=\textwidth}
\begin{tabular}{|c|c|c|c|c|c|c|c|c|c|c|}
\specialrule{1pt}{0pt}{0pt}
\rowcolor{blue!20}
 & \multicolumn{2}{c|}{$\|\sigma-\sigma_h\|$} &  \multicolumn{2}{c|}{$\|\text{div}(\sigma-\sigma_h)\|$}&   \multicolumn{2}{c|}{$\|u-u_h\|$}&  \multicolumn{2}{c|}{$\|Q^u_hu-u_h\|$}& \multicolumn{2}{c|}{$\|\gamma-\gamma_h\|$} \\ 
\hline
\rowcolor{blue!20}
$h$ & Error & Rate & Error & Rate & Error & Rate & Error & Rate & Error & Rate \\
\hline
1/2 & $7.566\text{E}-01$ & $-$  & $3.457\text{E}-01$ & $-$ & $6.034\text{E}-01$ & $-$ & $4.877\text{E}-02$ & $-$ & $3.017\text{E}-01$ & $-$ \\
\hline
1/4 & $4.037\text{E}-01$ & $0.91$  & $1.812\text{E}-01$ & $0.93$ & $3.162\text{E}-01$ & $0.93$ & $1.064\text{E}-02$ & $2.20$ & $1.139\text{E}-01$ & $1.40$ \\
\hline
1/8 & $2.072\text{E}-01$ & $0.96$  & $9.208\text{E}-02$ & $0.98$ & $1.600\text{E}-01$ & $0.98$ & $2.667\text{E}-03$ & $2.00$ & $4.187\text{E}-02$ & $1.44$ \\
\hline
1/16 & $1.047\text{E}-01$ & $0.98$  & $4.626\text{E}-02$ & $0.99$ & $8.024\text{E}-02$ & $1.00$ & $6.829\text{E}-04$ & $1.97$ & $1.511\text{E}-02$ & $1.47$ \\
\specialrule{1pt}{0pt}{0pt}
\end{tabular}
\end{adjustbox}
\label{tab:convergence_table2}
\end{table}

\begin{figure}[ht] 
    \centering
    \begin{subfigure}{.30\textwidth}
        \centering
        \includegraphics[width=\linewidth]{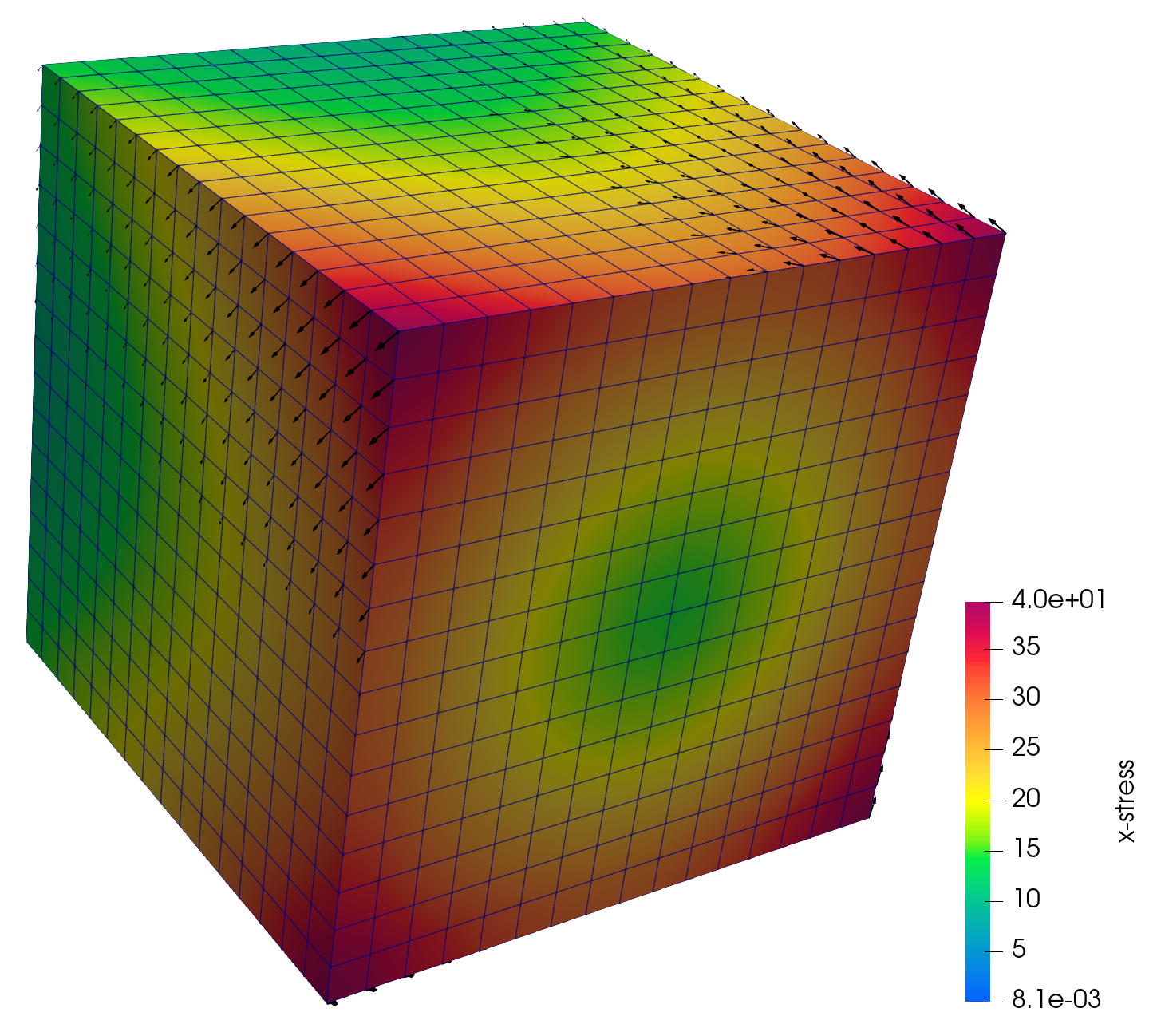}
        \caption{x-stress}
    \end{subfigure}%
    \hspace{3mm}
    \begin{subfigure}{.30\textwidth}
        \centering
        \includegraphics[width=\linewidth]{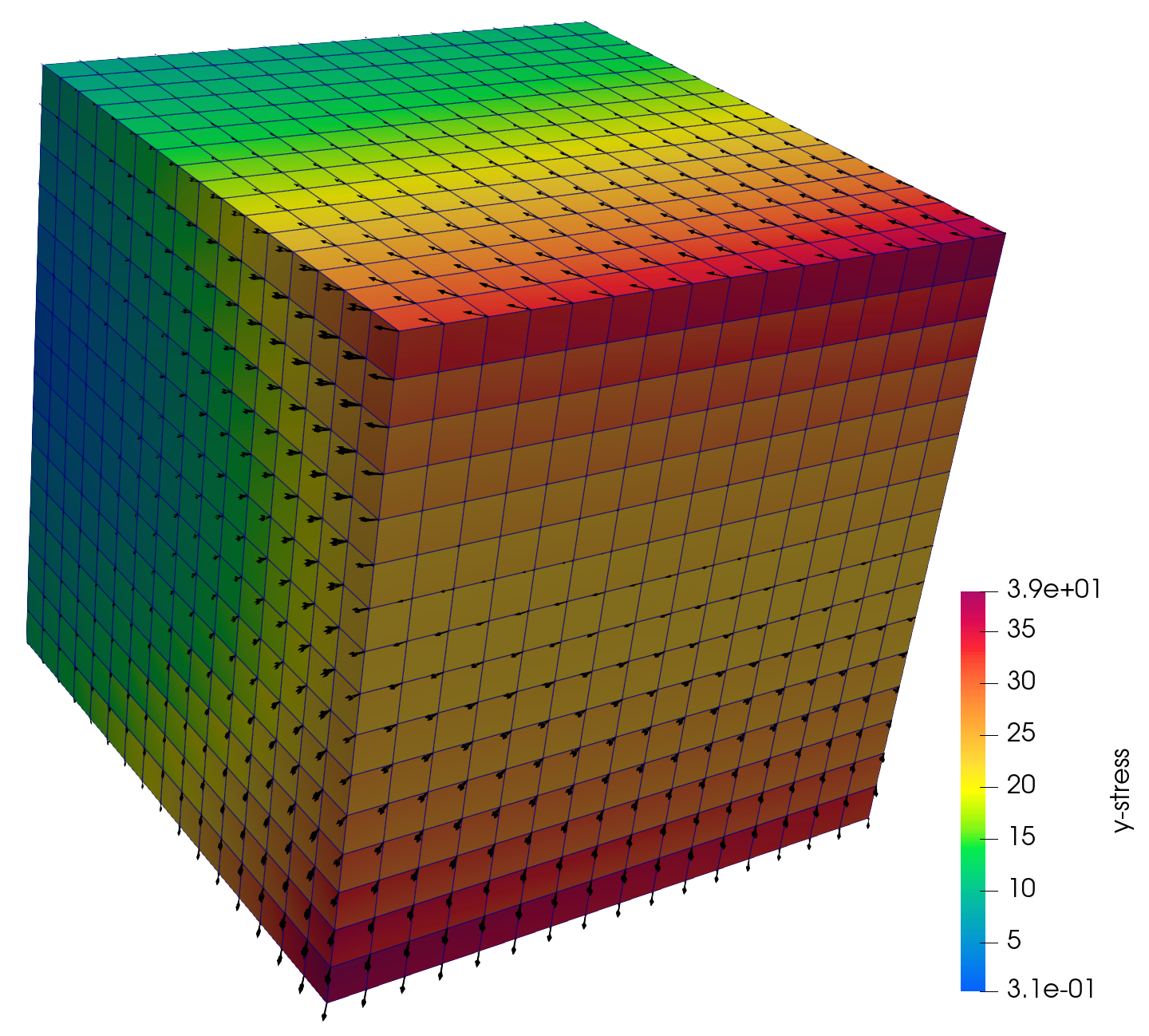}
        \caption{y-stress}
    \end{subfigure}%
        \hspace{3mm}
    \begin{subfigure}{.30\textwidth}
        \centering
        \includegraphics[width=\linewidth]{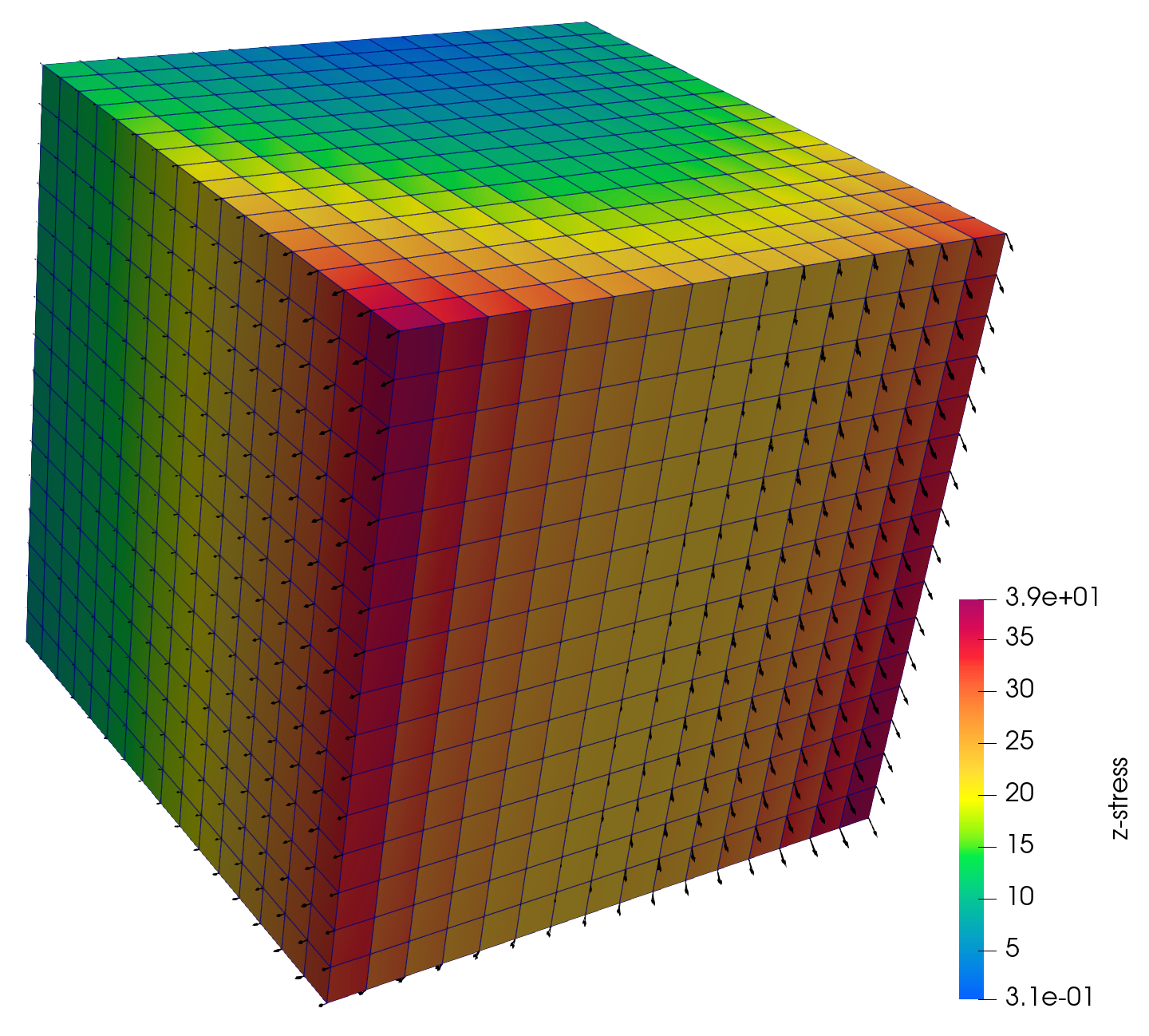}
        \caption{z-stress}
    \end{subfigure}
        \begin{subfigure}{.30\textwidth}
        \centering
        \includegraphics[width=\linewidth]{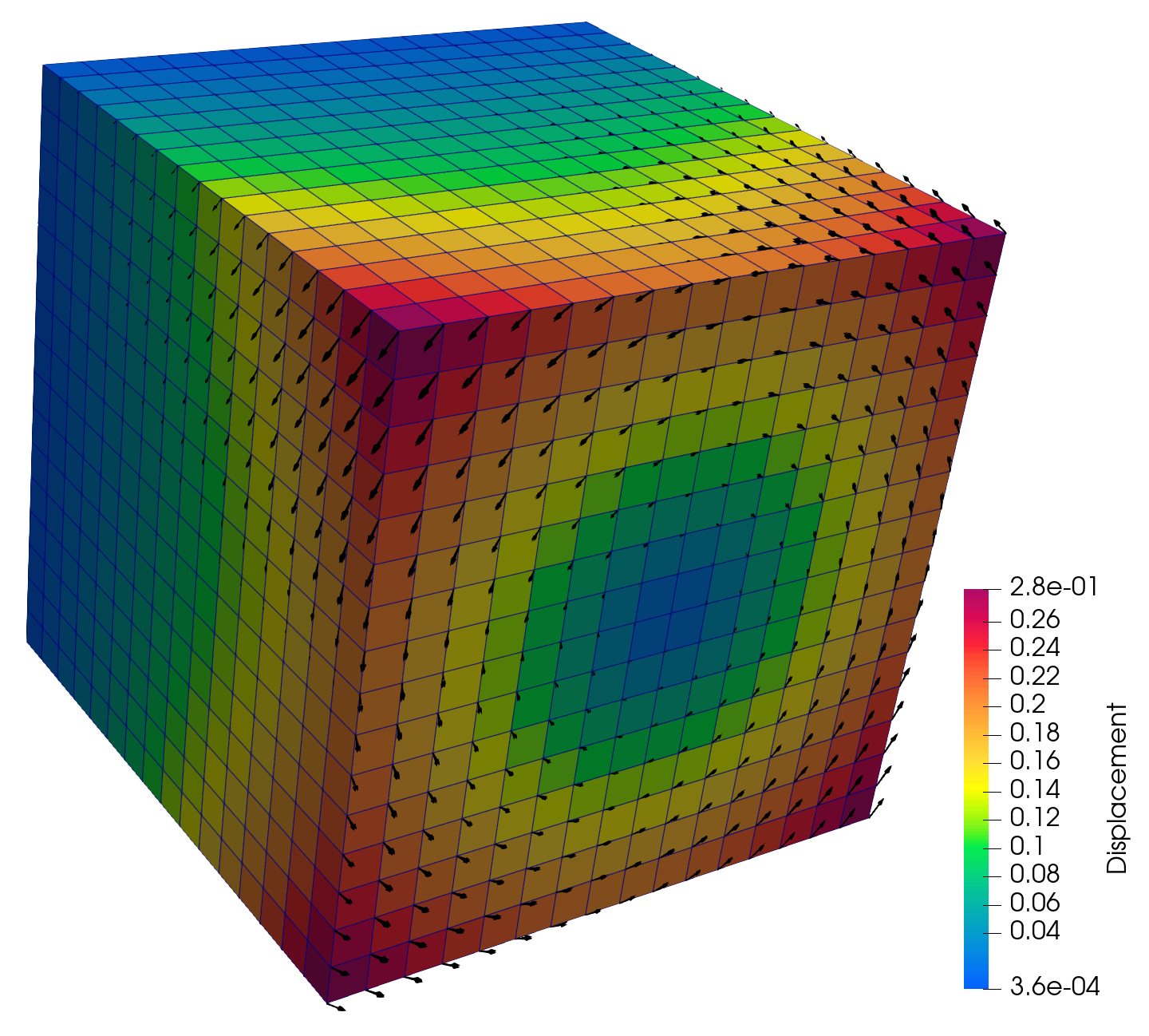}
        \caption{Displacement}
    \end{subfigure}
     \hspace{3mm}
        \begin{subfigure}{.30\textwidth}
        \centering
        \includegraphics[width=\linewidth]{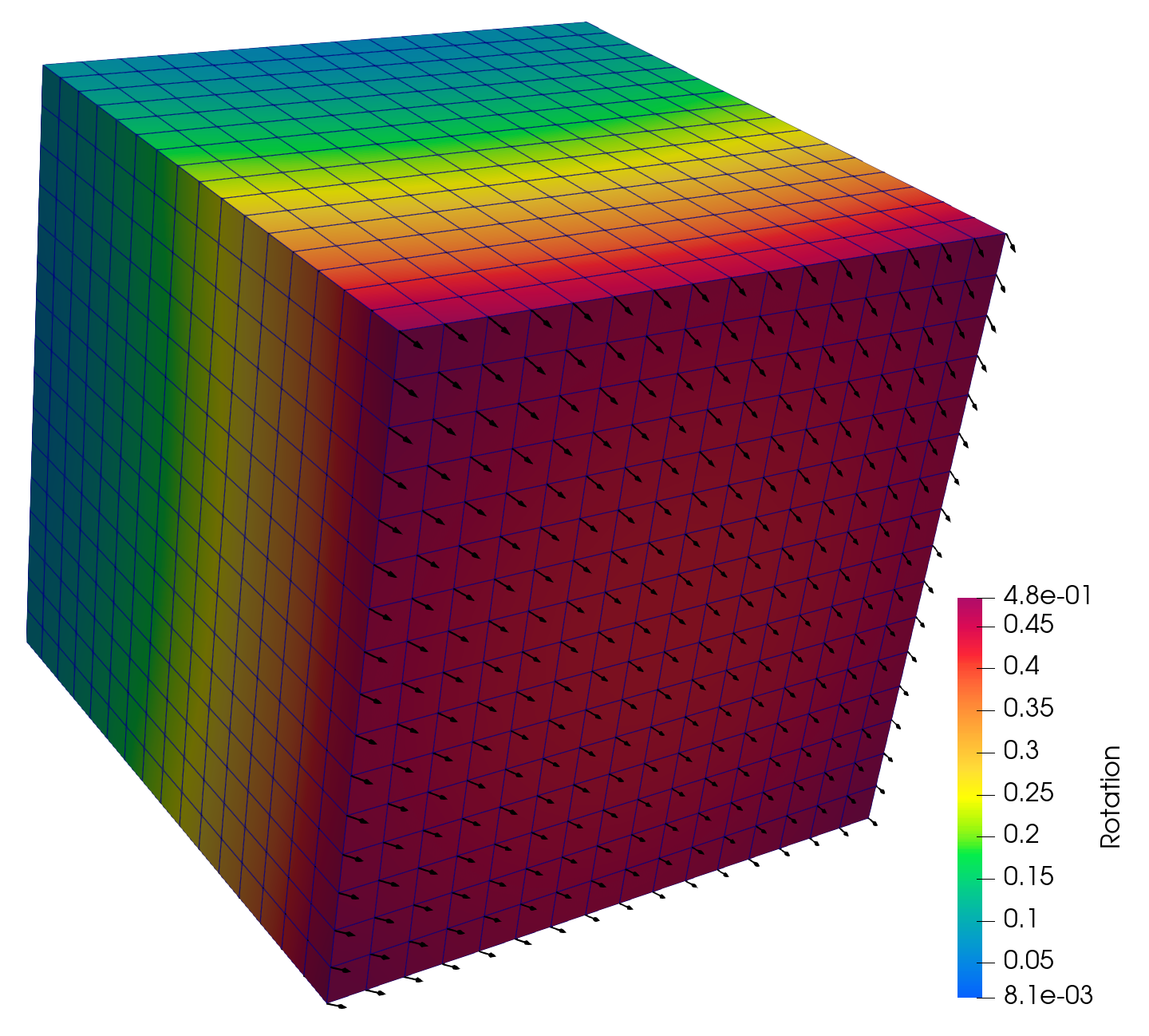}
        \caption{Rotation}
    \end{subfigure}
        \caption{Computed solution for Example 1 via the MSMFE-1 method, $h = 1/16$.}
         \label{ex1figure}
\end{figure}

\noindent
{\bf Example 2: discontinuous coefficient.} This example demonstrates the performance of the MSMFE
methods for discontinuous materials. We consider a $2 \times  2 \times 2$ partitioning of the unit
cube and introduce heterogeneity through
\begin{align*}
\chi(x,y,z) = 
\begin{cases} 
1 & \text{if } \max(x,y,z) < \frac{1}{2}, \\ 
0 & \text{otherwise}.
\end{cases}
\end{align*}
We set $\kappa=10^6$ to characterize the jump in the Lam\'e coefficients and take $\lambda  = \mu  =
(1- \chi ) + \kappa \chi$ . We choose a discontinuous displacement solution as
\begin{align*}
u = \frac{1}{(1 - \chi) + \kappa \chi} \begin{pmatrix} \sin(2\pi x) \sin(2\pi y) \sin(2\pi z) \\ \sin(2\pi x) \sin(2\pi y) \sin(2\pi z) \\ \sin(2\pi x) \sin(2\pi y) \sin(2\pi z) \end{pmatrix},
\end{align*}
so that the stress is continuous and independent of $\kappa$. The body forces are determined
from the above solution using the governing equations. We note that the rotation
$\gamma  = \text{Skew}(\nabla u)$ is discontinuous. The MSMFE-0 method, which has discontinuous displacements
and rotations, handles properly the discontinuity in these variables and
exhibits first order convergence in all variables, as well as displacement superconvergence, see Table \ref{tab:convergence_table3}. The MSMFE-1 method uses continuous rotations
and does not resolve the rotation discontinuity, which results in a reduced convergence
rate for the rotation, as well as the stress. Instead, we can use the modified
MSMFE-1 method \eqref{h-weak-P1-1-mod}--\eqref{h-weak-P1-3-mod} based on the scaled rotation $\tilde{\gamma}  = A^{-1}\gamma$, which in this case is continuous.  The computed solution using the modified MSMFE-1 method, which includes the scaled rotation, is presented in Figure \ref{ex2figure}. The plots display the domain cut along $y=0.25$ to reveal the results within the interior. Table \ref{tab:convergence_table4} indicates that the method exhibits the same
order of convergence for all variables as for smooth problems.

\begin{table}[H]
\centering
\caption{Relative errors and convergence rates for Example 2 via the MSMFE-0 method.}
\vspace{-0.3cm}
\begin{adjustbox}{max width=\textwidth}
\begin{tabular}{|c|c|c|c|c|c|c|c|c|c|c|}
\specialrule{1pt}{0pt}{0pt}
\rowcolor{blue!20}
 & \multicolumn{2}{c|}{$\|\sigma-\sigma_h\|$} &  \multicolumn{2}{c|}{$\|\text{div}(\sigma-\sigma_h)\|$}&   \multicolumn{2}{c|}{$\|u-u_h\|$}&  \multicolumn{2}{c|}{$\|Q^u_hu-u_h\|$}& \multicolumn{2}{c|}{$\|\gamma-\gamma_h\|$} \\ 
\hline
\rowcolor{blue!20}
$h$ & Error & Rate & Error & Rate & Error & Rate & Error & Rate & Error & Rate \\
\hline
1/2 & $1.000\text{E}+00$ & $-$  & $1.000\text{E}+00$ & $-$ & $1.000\text{E}+00$ & $-$ & $1.000\text{E}+00$ & $-$ & $1.000\text{E}+00$ & $-$ \\
\hline
1/4 & $7.662\text{E}-01$ & $0.38$  & $7.867\text{E}-01$ & $0.35$ & $7.626\text{E}-01$ & $0.39$ & $6.023\text{E}-01$ & $0.73$ & $7.662\text{E}-01$ & $0.38$ \\
\hline
1/8 & $3.466\text{E}-01$ & $1.14$  & $4.097\text{E}-01$ & $0.94$ & $3.975\text{E}-01$ & $0.94$ & $1.901\text{E}-01$ & $1.66$ & $3.952\text{E}-01$ & $0.96$ \\
\hline
1/16 & $1.515\text{E}-01$ & $1.19$  & $2.030\text{E}-01$ & $1.01$ & $1.974\text{E}-01$ & $1.01$ & $5.088\text{E}-02$ & $1.90$ & $1.969\text{E}-01$ & $1.00$ \\
\specialrule{1pt}{0pt}{0pt}
\end{tabular}
\end{adjustbox}
\label{tab:convergence_table3}
\end{table}

\begin{table}[H]
\centering
\caption{Relative errors and convergence rates for Example 2 via the modified MSMFE-1 method.}
\vspace{-0.3cm}
\begin{adjustbox}{max width=\textwidth}
\begin{tabular}{|c|c|c|c|c|c|c|c|c|c|c|}
\specialrule{1pt}{0pt}{0pt}
\rowcolor{blue!20}
 & \multicolumn{2}{c|}{$\|\sigma-\sigma_h\|$} &  \multicolumn{2}{c|}{$\|\text{div}(\sigma-\sigma_h)\|$}&   \multicolumn{2}{c|}{$\|u-u_h\|$}&  \multicolumn{2}{c|}{$\|Q^u_hu-u_h\|$}& \multicolumn{2}{c|}{$\|\gamma-\gamma_h\|$} \\ 
\hline
\rowcolor{blue!20}
$h$ & Error & Rate & Error & Rate & Error & Rate & Error & Rate & Error & Rate \\
\hline
1/2 & $1.000\text{E}+00$ & $-$  & $1.000\text{E}+00$ & $-$ & $1.000\text{E}+00$ & $-$ & $1.000\text{E}+00$ & $-$ & $1.000\text{E}+00$ & $-$ \\
\hline
1/4 & $7.797\text{E}-01$ & $0.36$  & $7.867\text{E}-01$ & $0.35$ & $7.806\text{E}-01$ & $0.36$ & $6.388\text{E}-01$ & $0.65$ & $8.836\text{E}-01$ & $0.18$ \\
\hline
1/8 & $3.816\text{E}-01$ & $1.03$  & $4.097\text{E}-01$ & $0.94$ & $4.278\text{E}-01$ & $0.87$ & $2.665\text{E}-01$ & $1.26$ & $5.144\text{E}-01$ & $0.78$ \\
\hline
1/16 & $1.753\text{E}-01$ & $1.12$  & $2.030\text{E}-01$ & $1.01$ & $2.067\text{E}-01$ & $1.05$ & $8.675\text{E}-02$ & $1.62$ & $2.012\text{E}-01$ & $1.35$ \\
\hline
1/32 & $8.371\text{E}-02$ & $1.07$  & $1.011\text{E}-01$ & $1.01$ & $9.993\text{E}-02$ & $1.05$ & $2.404\text{E}-02$ & $1.85$ & $6.594\text{E}-02$ & $1.61$ \\
\specialrule{1pt}{0pt}{0pt}
\end{tabular}
\end{adjustbox}
\label{tab:convergence_table4}
\end{table}

\begin{figure}[ht] 
    \centering
    \begin{subfigure}{.30\textwidth}
        \centering
        \includegraphics[width=\linewidth]{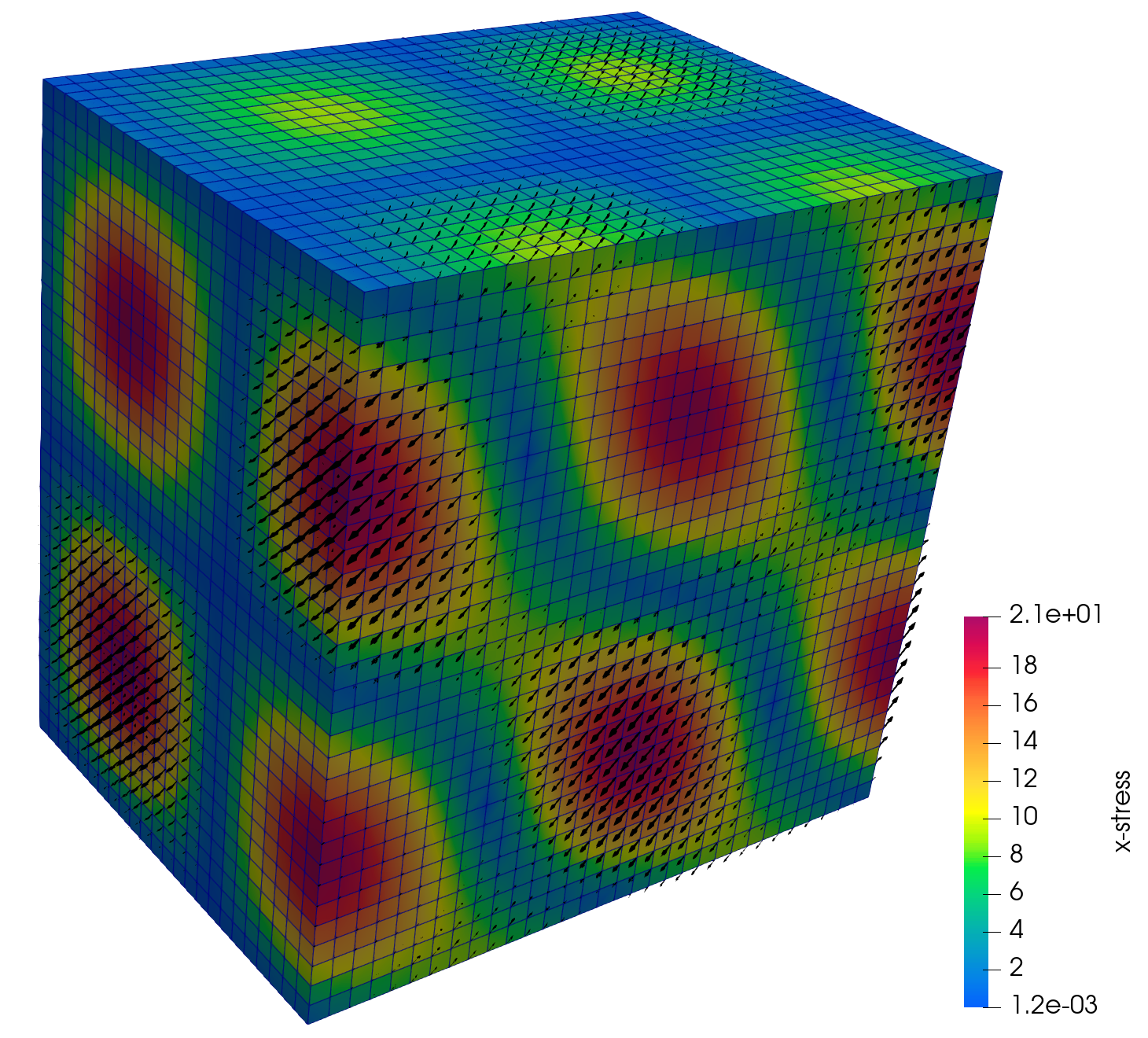}
        \caption{x-stress}
    \end{subfigure}%
    \hspace{3mm}
    \begin{subfigure}{.30\textwidth}
        \centering
        \includegraphics[width=\linewidth]{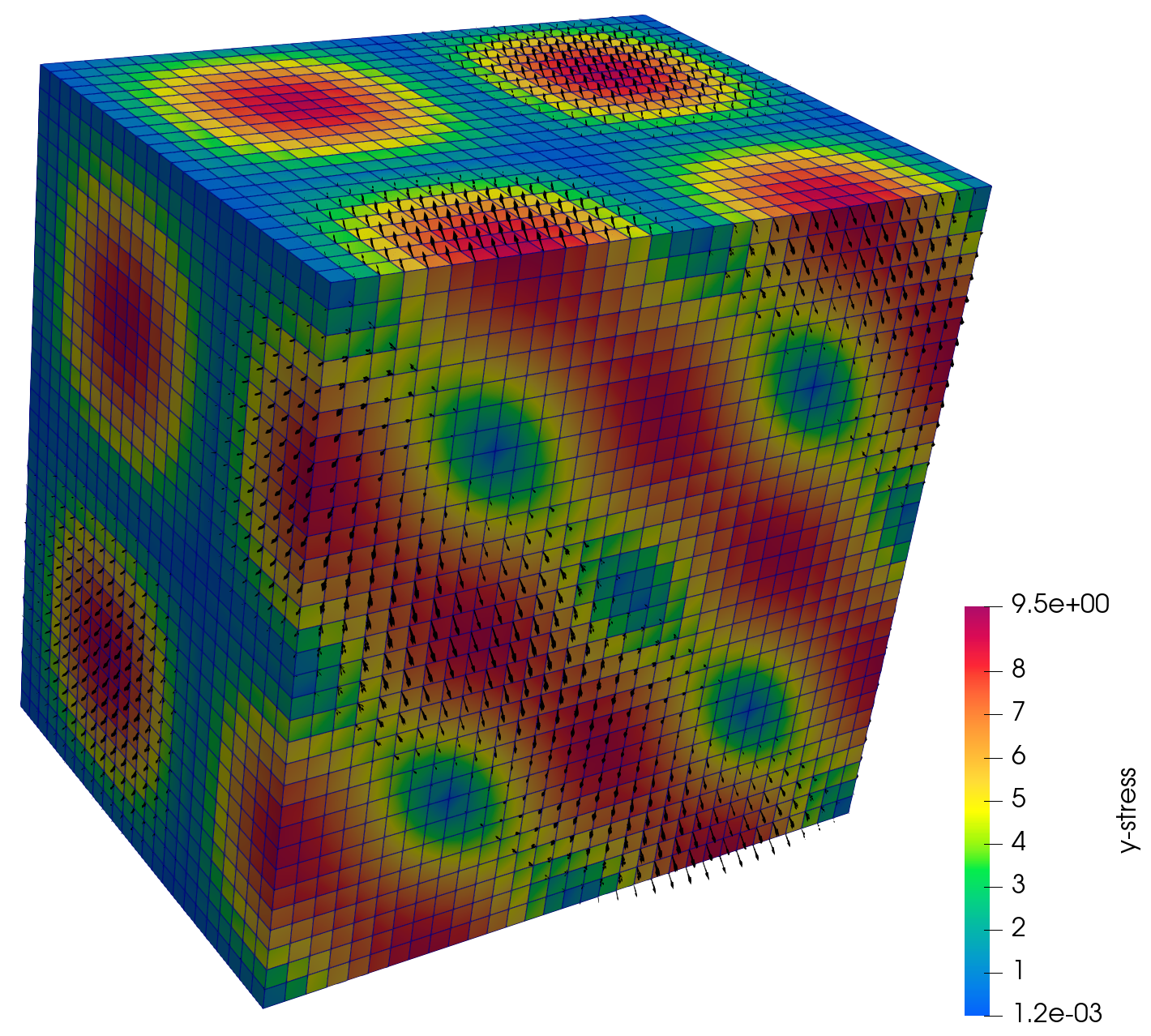}
        \caption{y-stress}
    \end{subfigure}%
        \hspace{3mm}
    \begin{subfigure}{.30\textwidth}
        \centering
        \includegraphics[width=\linewidth]{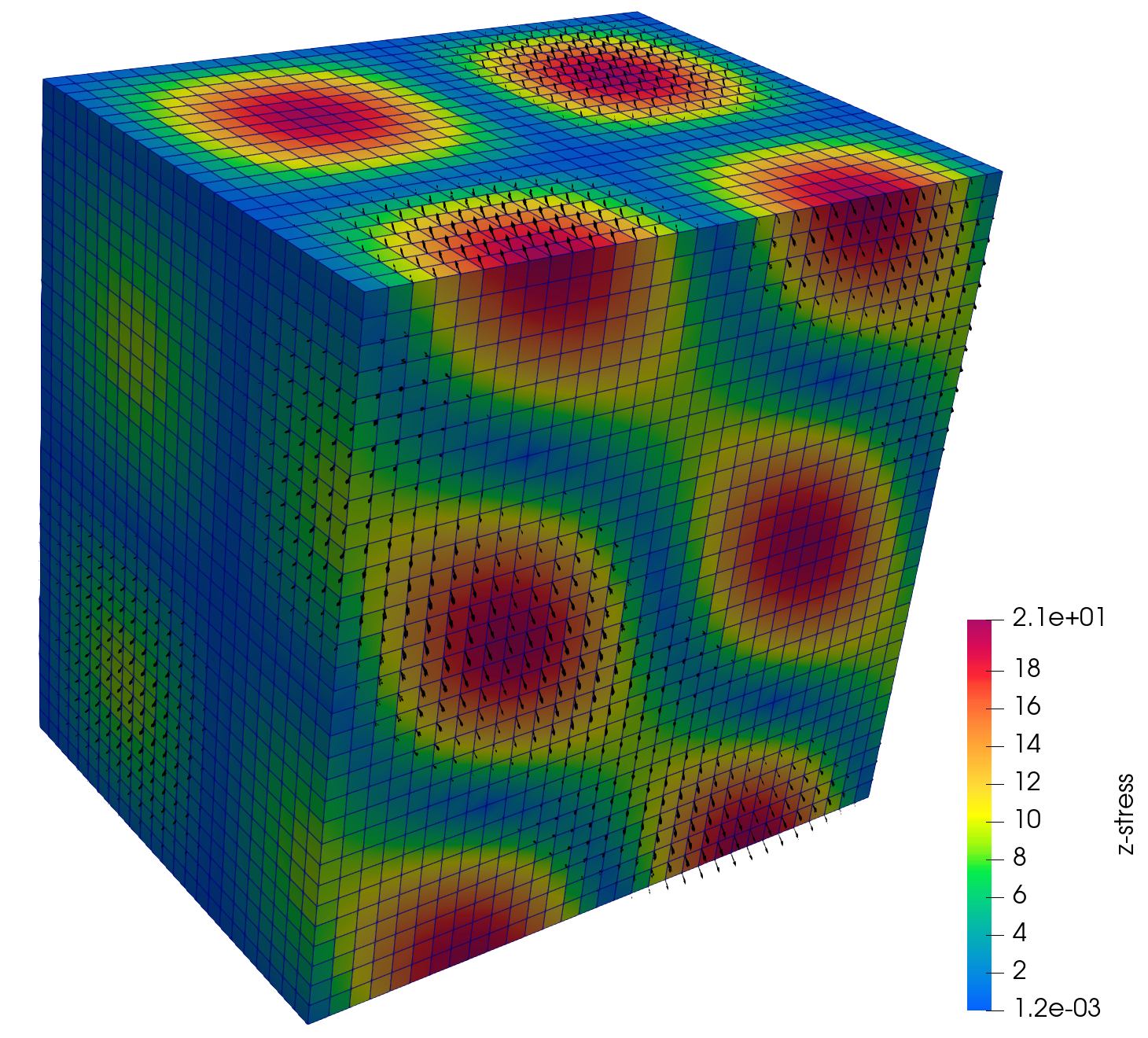}
        \caption{z-stress}
    \end{subfigure}
        \begin{subfigure}{.30\textwidth}
        \centering
        \vspace{.3cm}
        \includegraphics[width=\linewidth]{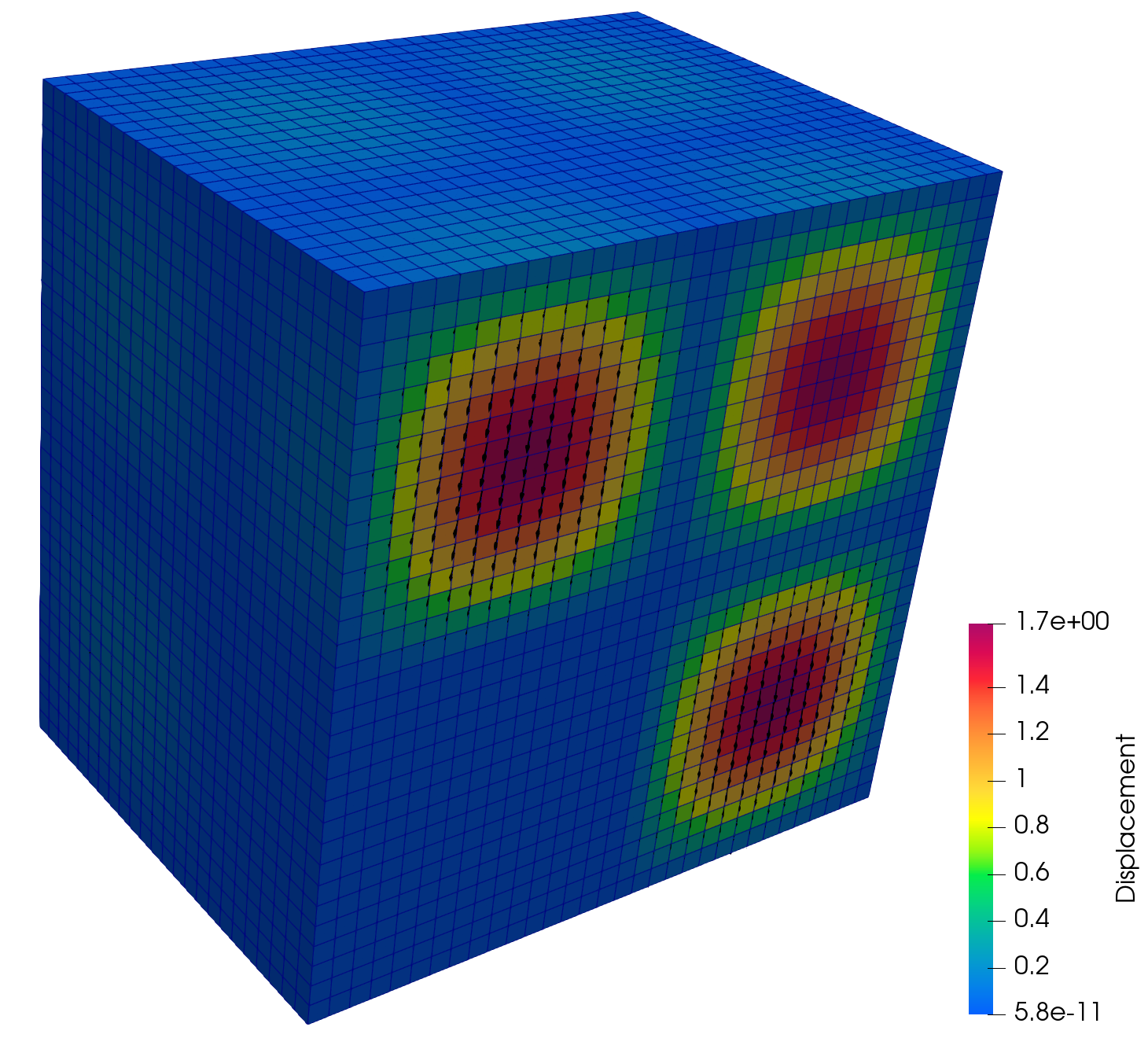}
        \caption{Displacement}
    \end{subfigure}
     \hspace{3mm}
        \begin{subfigure}{.30\textwidth}
        \centering
         \vspace{.3cm}
        \includegraphics[width=\linewidth]{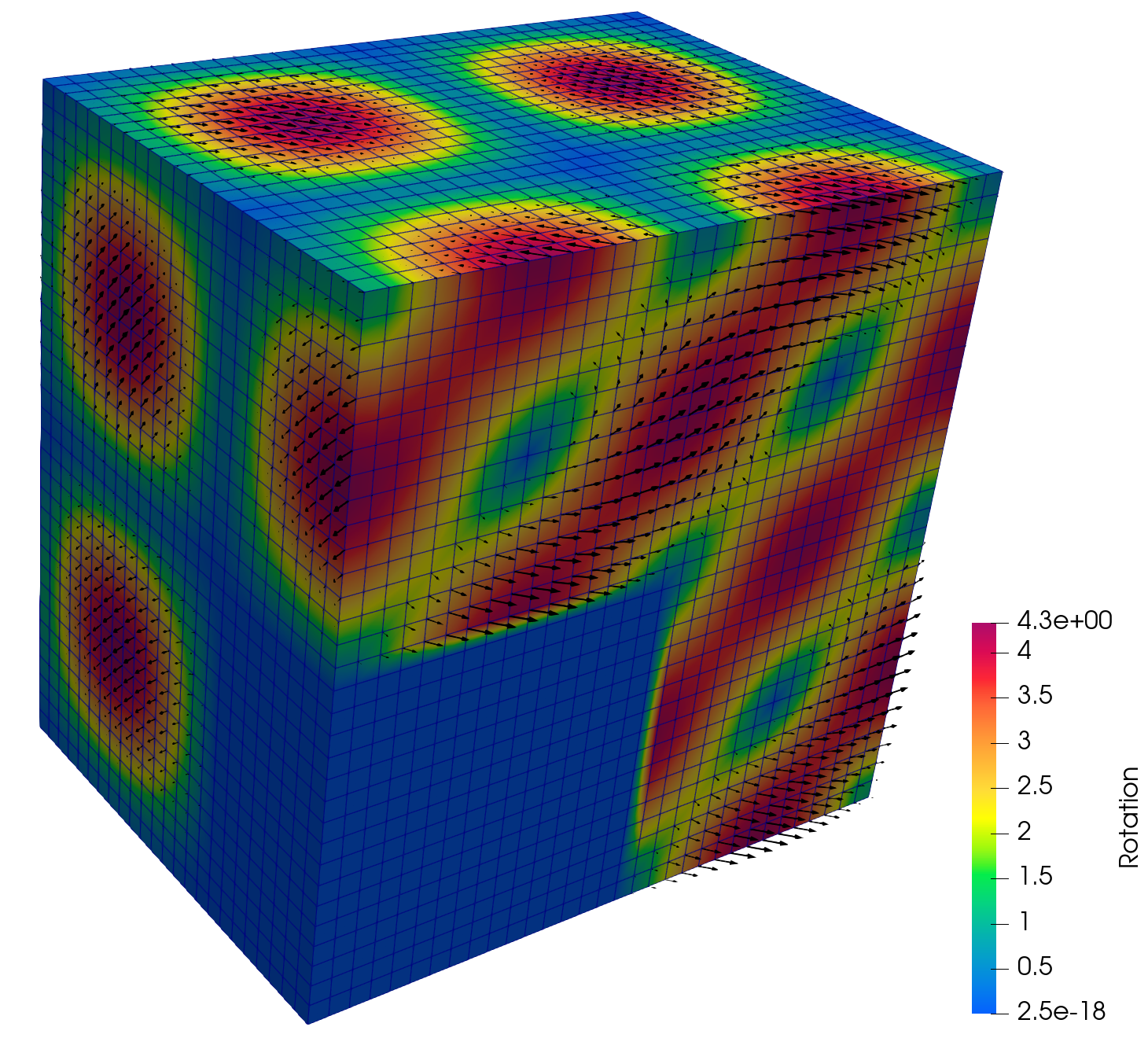}
        \caption{Rotation}
    \end{subfigure}
         \hspace{3mm}
        \begin{subfigure}{.30\textwidth}
        \centering
         \vspace{.3cm}
        \includegraphics[width=\linewidth]{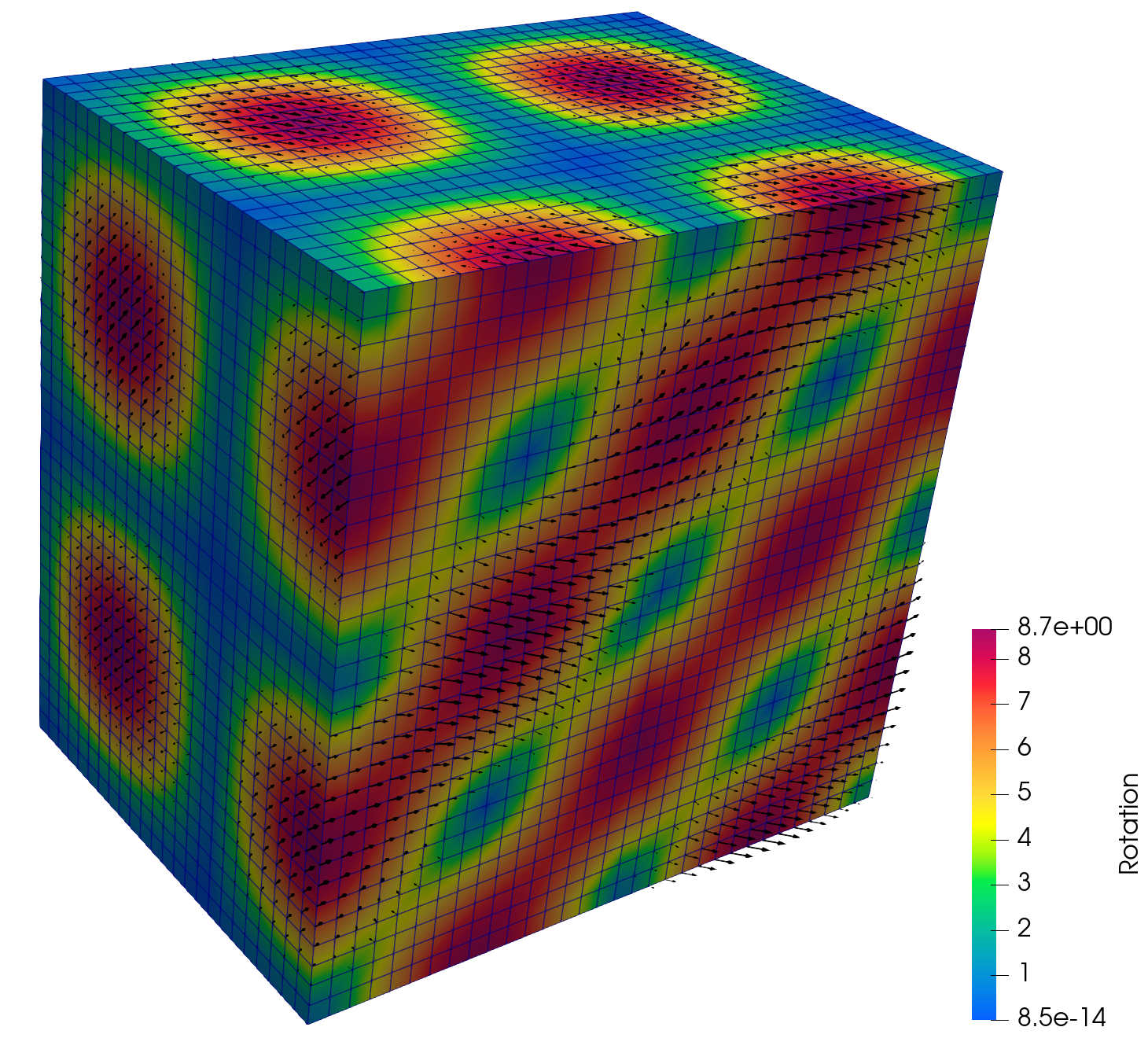}
        \caption{Scaled Rotation}
    \end{subfigure}
        \caption{Computed solution for Example 2 via the MSMFE-1 method, $h = 1/32$.}
         \label{ex2figure}
\end{figure}

\medskip
\noindent
{\bf Example 3: locking free property.} Our final example investigates the locking-free property of the MSMFE method. We focus on the MSMFE-1 method and use the analytical solution given in \eqref{ex1-soln}. The Lam\'e coefficients are determined from Young's modulus \(E\) and Poisson's ratio \(\nu\) via the well-known relationships:
\[
\lambda = \frac{E \nu}{(1 + \nu)(1 - 2 \nu)}, \quad \mu = \frac{E}{2(1 + \nu)}.
\]
We fix Young's modulus at \(E = 10^5\) and vary Poisson's ratio as \(\nu = 0.5 - k\), where \(k = 10^{-l}\) for \(l = \{1, 2, 5, 9\}\). Tables \ref{tab:convergence_table5}--\ref{tab:convergence_table8} display the errors and convergence rates for each case. If locking were present, the displacement solution would approach zero as \(\nu\) approaches 0.5. However, the results in Tables 5--8 indicate that this behavior does not occur, confirming the robustness of the method for nearly incompressible materials.

\begin{table}[H]
\centering
\caption{Relative errors and convergence rates for Example 3, $k=10^{-1}$.}
\vspace{-0.3cm}
\begin{adjustbox}{max width=\textwidth}
\begin{tabular}{|c|c|c|c|c|c|c|c|c|c|c|}
\specialrule{1pt}{0pt}{0pt}
\rowcolor{blue!20}
 & \multicolumn{2}{c|}{$\|\sigma-\sigma_h\|$} &  \multicolumn{2}{c|}{$\|\text{div}(\sigma-\sigma_h)\|$}&   \multicolumn{2}{c|}{$\|u-u_h\|$}&  \multicolumn{2}{c|}{$\|Q^u_hu-u_h\|$}& \multicolumn{2}{c|}{$\|\gamma-\gamma_h\|$} \\ 
\hline
\rowcolor{blue!20}
$h$ & Error & Rate & Error & Rate & Error & Rate & Error & Rate & Error & Rate \\
\hline
1/2 & $4.974\text{E}-01$ & $-$  & $2.826\text{E}-01$ & $-$ & $6.045\text{E}-01$ & $-$ & $6.668\text{E}-02$ & $-$ & $3.017\text{E}-01$ & $-$ \\
\hline
1/4 & $2.634\text{E}-01$ & $0.92$  & $1.464\text{E}-01$ & $0.95$ & $3.164\text{E}-01$ & $0.93$ & $1.429\text{E}-02$ & $2.22$ & $1.139\text{E}-01$ & $1.40$ \\
\hline
1/8 & $1.349\text{E}-01$ & $0.97$  & $7.405\text{E}-02$ & $0.98$ & $1.600\text{E}-01$ & $0.98$ & $3.496\text{E}-03$ & $2.03$ & $4.186\text{E}-02$ & $1.44$ \\
\hline
1/16 & $6.812\text{E}-02$ & $0.99$  & $3.715\text{E}-02$ & $1.00$ & $8.024\text{E}-02$ & $1.00$ & $8.803\text{E}-04$ & $1.99$ & $1.511\text{E}-02$ & $1.47$ \\
\specialrule{1pt}{0pt}{0pt}
\end{tabular}
\end{adjustbox}
\label{tab:convergence_table5}
\end{table}

\begin{table}[H]
\centering
\caption{Relative errors and convergence rates for Example 3, $k=10^{-2}$.}
\vspace{-0.3cm}
\begin{adjustbox}{max width=\textwidth}
\begin{tabular}{|c|c|c|c|c|c|c|c|c|c|c|}
\specialrule{1pt}{0pt}{0pt}
\rowcolor{blue!20}
 & \multicolumn{2}{c|}{$\|\sigma-\sigma_h\|$} &  \multicolumn{2}{c|}{$\|\text{div}(\sigma-\sigma_h)\|$}&   \multicolumn{2}{c|}{$\|u-u_h\|$}&  \multicolumn{2}{c|}{$\|Q^u_hu-u_h\|$}& \multicolumn{2}{c|}{$\|\gamma-\gamma_h\|$} \\ 
\hline
\rowcolor{blue!20}
$h$ & Error & Rate & Error & Rate & Error & Rate & Error & Rate & Error & Rate \\
\hline
1/2 & $6.618\text{E}-02$ & $-$  & $1.758\text{E}-01$ & $-$ & $6.074\text{E}-01$ & $-$ & $1.011\text{E}-01$ & $-$ & $3.017\text{E}-01$ & $-$ \\
\hline
1/4 & $3.039\text{E}-02$ & $1.12$  & $8.874\text{E}-02$ & $0.99$ & $3.167\text{E}-01$ & $0.94$ & $2.171\text{E}-02$ & $2.22$ & $1.139\text{E}-01$ & $1.40$ \\
\hline
1/8 & $1.497\text{E}-02$ & $1.02$  & $4.450\text{E}-02$ & $1.00$ & $1.601\text{E}-01$ & $0.98$ & $5.317\text{E}-03$ & $2.03$ & $4.187\text{E}-02$ & $1.44$ \\
\hline
1/16 & $7.418\text{E}-03$ & $1.00$  & $2.227\text{E}-02$ & $1.00$ & $8.025\text{E}-02$ & $1.00$ & $1.335\text{E}-03$ & $1.99$ & $1.511\text{E}-02$ & $1.47$ \\
\specialrule{1pt}{0pt}{0pt}
\end{tabular}
\end{adjustbox}
\label{tab:convergence_table6}
\end{table}

\begin{table}[H]
\centering
\caption{Relative errors and convergence rates for Example 3, $k=10^{-5}$.}
\vspace{-0.3cm}
\begin{adjustbox}{max width=\textwidth}
\begin{tabular}{|c|c|c|c|c|c|c|c|c|c|c|}
\specialrule{1pt}{0pt}{0pt}
\rowcolor{blue!20}
 & \multicolumn{2}{c|}{$\|\sigma-\sigma_h\|$} &  \multicolumn{2}{c|}{$\|\text{div}(\sigma-\sigma_h)\|$}&   \multicolumn{2}{c|}{$\|u-u_h\|$}&  \multicolumn{2}{c|}{$\|Q^u_hu-u_h\|$}& \multicolumn{2}{c|}{$\|\gamma-\gamma_h\|$} \\ 
\hline
\rowcolor{blue!20}
$h$ & Error & Rate & Error & Rate & Error & Rate & Error & Rate & Error & Rate \\
\hline
1/2 & $4.120\text{E}-02$ & $-$  & $1.585\text{E}-01$ & $-$ & $6.080\text{E}-01$ & $-$ & $1.072\text{E}-01$ & $-$ & $3.017\text{E}-01$ & $-$ \\
\hline
1/4 & $1.063\text{E}-02$ & $1.95$  & $7.966\text{E}-02$ & $0.99$ & $3.168\text{E}-01$ & $0.94$ & $2.312\text{E}-02$ & $2.21$ & $1.139\text{E}-01$ & $1.40$ \\
\hline
1/8 & $2.697\text{E}-03$ & $1.99$  & $3.988\text{E}-02$ & $1.00$ & $1.601\text{E}-01$ & $0.98$ & $5.695\text{E}-03$ & $2.02$ & $4.187\text{E}-02$ & $1.44$ \\
\hline
1/16 & $6.712\text{E}-04$ & $2.00$  & $1.995\text{E}-02$ & $1.00$ & $8.025\text{E}-02$ & $1.00$ & $1.434\text{E}-03$ & $1.99$ & $1.511\text{E}-02$ & $1.47$ \\
\specialrule{1pt}{0pt}{0pt}
\end{tabular}
\end{adjustbox}
\label{tab:convergence_table7}
\end{table}

\begin{table}[H]
\centering
\caption{Relative errors and convergence rates for Example 3, $k=10^{-9}$.}
\vspace{-0.3cm}
\begin{adjustbox}{max width=\textwidth}
\begin{tabular}{|c|c|c|c|c|c|c|c|c|c|c|}
\specialrule{1pt}{0pt}{0pt}
\rowcolor{blue!20}
 & \multicolumn{2}{c|}{$\|\sigma-\sigma_h\|$} &  \multicolumn{2}{c|}{$\|\text{div}(\sigma-\sigma_h)\|$}&   \multicolumn{2}{c|}{$\|u-u_h\|$}&  \multicolumn{2}{c|}{$\|Q^u_hu-u_h\|$}& \multicolumn{2}{c|}{$\|\gamma-\gamma_h\|$} \\ 
\hline
\rowcolor{blue!20}
$h$ & Error & Rate & Error & Rate & Error & Rate & Error & Rate & Error & Rate \\
\hline
1/2 & $4.120\text{E}-02$ & $-$  & $1.585\text{E}-01$ & $-$ & $6.080\text{E}-01$ & $-$ & $1.072\text{E}-01$ & $-$ & $3.017\text{E}-01$ & $-$ \\
\hline
1/4 & $1.064\text{E}-02$ & $1.95$  & $7.965\text{E}-02$ & $0.99$ & $3.168\text{E}-01$ & $0.94$ & $2.312\text{E}-02$ & $2.21$ & $1.139\text{E}-01$ & $1.40$ \\
\hline
1/8 & $2.680\text{E}-03$ & $1.99$  & $3.988\text{E}-02$ & $1.00$ & $1.601\text{E}-01$ & $0.98$ & $5.695\text{E}-03$ & $2.02$ & $4.187\text{E}-02$ & $1.44$ \\
\hline
1/16 & $6.713\text{E}-04$ & $2.00$  & $1.994\text{E}-02$ & $1.00$ & $8.025\text{E}-02$ & $1.00$ & $1.435\text{E}-03$ & $1.99$ & $1.511\text{E}-02$ & $1.47$ \\
\specialrule{1pt}{0pt}{0pt}
\end{tabular}
\end{adjustbox}
\label{tab:convergence_table8}
\end{table}

\section{Conclusion}
We presented two MFE methods for linear elasticity with weak stress symmetry on cuboid grids, which reduce to symmetric and positive definite cell-centered algebraic systems. These methods utilize the lowest order enhanced Raviart-Thomas space $\mathcal{ERT}_0$ for the weakly symmetric stress. The MSMFE-0 method reduces to a cell-centered scheme for displacements and rotations, while the MSMFE-1 method reduces to a cell-centered scheme for displacements only.  To prove the stability of the MSMFE-1 method, we developed a new $H(\curl)$-conforming matrix-valued space, which forms an exact sequence with the stress space, and established a discrete $\curl$-based inf-sup condition with the rotation space. We demonstrated that the resulting algebraic system for each method is symmetric and positive definite. We established first-order convergence for all variables in their natural norms and second-order convergence for the displacements at the cell centers. The theory was illustrated by numerical experiments. Additionally, we tested the performance of the methods for problems with discontinuous coefficients and parameters in the nearly incompressible regime, showing good performance. Possible extensions of this work include higher order methods \cite{high-order-mfmfe} and hexahedral elements \cite{IngWheYot-sym,WheXueYot-nonsym}.
  
\bibliographystyle{abbrv}
\bibliography{Project-1}

\end{document}